\setlist[enumerate]{label={\arabic*.}}
\newcommand{\refcheckize}[1]{%
  \expandafter\let\csname @@\string#1\endcsname#1%
  \expandafter\DeclareRobustCommand\csname relax\string#1\endcsname[1]{%
    \csname @@\string#1\endcsname{##1}\wrtusdrf{##1}}%
  \expandafter\let\expandafter#1\csname relax\string#1\endcsname
}
\numberwithin{equation}{section}
\newtheorem{theorem}{Theorem}[section]
\newtheorem*{theorem*}{Theorem}
\newtheorem{lemma}[theorem]{Lemma}
\newtheorem{proposition}[theorem]{Proposition}
\newtheorem{corollary}[theorem]{Corollary}
\theoremstyle{definition}
\newtheorem{remark}[theorem]{Remark}
\newtheorem{definition}[theorem]{Definition}
\newcommand{\eps}{\varepsilon}
\renewcommand{\phi}{\varphi}
\renewcommand{\bar}{\overline}
\renewcommand{\hat}{\widehat}
\renewcommand{\subset}{\subseteq}
\renewcommand{\supset}{\supseteq}
\def\Ell{\mathcal{L}}
\newcommand\opr[1]{\operatorname{#1}}
\def\R{\mathbf{R}}
\def\C{\mathbf{C}}
\def\Z{\mathbf{Z}}
\def\F{\mathbf{F}}
\def\E{\mathbf{E}}
\def\K{\mathbf{K}}
\def\GL{\opr{GL}}
\def\SL{\opr{SL}}
\def\PGL{\opr{PGL}}
\def\Aut{\opr{Aut}}
\def\E{\mathbf{E}}
\def\nsgp{\trianglelefteq}
\def\csgp{\mathrel{
\mathrlap{\trianglelefteq}{\raisebox{1pt}{$\blacktriangleleft$}}
%\textup{char}
}}
\def\tr{\opr{tr}}
\def\chr{\opr{char}}
\def\Sol{\opr{Sol}}
\def\Lie{\opr{Lie}}
\def\Sym{\opr{Sym}}
\def\Hom{\opr{Hom}}
\newcommand\br[1]{{\left(#1\right)}}
\newcommand\floor[1]{\left\lfloor{#1}\right\rfloor}
\newcommand{\gen}[1]{\langle{#1}\rangle}
\newcommand{\frat}{\opr{Frat}}
\newcommand{\roots}{\Phi}
\begin{document}

    \title{Growth in linear groups}

    \author[Eberhard]{Sean Eberhard}
    \address{Sean Eberhard, Mathematical Sciences Research Centre, Queen's University Belfast, Belfast BT7~1NN, UK}
    \email{s.eberhard@qub.ac.uk}

    \author[Murphy]{Brendan Murphy}
    \address{Brendan Murphy, Atmospheric Chemistry Research Group, School of Chemistry, University of Bristol, Bristol BS8~1TS, UK}
    \email{brendan.murphy@bristol.ac.uk}

    \def\hungaryaddress{{A.~R\'enyi Institute of Mathematics,
        E\"otv\"os Lor\'and Research Network,
        P.O.~Box~127, H-1364 Budapest, Hungary}}
    \author[Pyber]{L\'aszl\'o Pyber}
    \address{L\'aszl\'o Pyber, \hungaryaddress}
    \email{pyber@renyi.hu}

    \author[Szab\'o]{Endre Szab\'o}
    \address{Endre Szab\'o, \hungaryaddress}
    \email{endre@renyi.hu}

    \thanks{SE has received funding from the European Research Council (ERC) under the European Union’s Horizon 2020 research and innovation programme (grant agreement No. 803711) and from the Royal Society.
    BM was funded by The Leverhulme Trust through Leverhulme grant RPG 2017-371.
     LP was supported by the National Research,
    Development and Innovation Office (NKFIH) Grant K115799,
    ESz was supported by the NKFIH Grants K115799 and K120697.
    The project leading to this application has received funding from
    the European Research Council (ERC) under the European Union's
    Horizon 2020 research and innovation programme (grant agreement No
    741420)}

    \begin{abstract}
        We prove a conjecture of Helfgott on the structure of sets of bounded tripling in bounded rank,
        which states the following.
        Let $A$ be a finite symmetric subset of $\GL_n(\F)$ for any field $\F$ such that $|A^3| \leq K|A|$.
        Then there are subgroups $H \nsgp \Gamma \nsgp \langle A \rangle$ such that
        $A$ is covered by $K^{O_n(1)}$ cosets of $\Gamma$,
        $\Gamma / H$ is nilpotent of step at most $n-1$, and
        $H$ is contained in $A^{O_n(1)}$.
        This theorem includes the Product Theorem for finite simple
        groups of bounded rank as a special case.
        As an application of our methods we also show that
        the diameter of sufficiently quasirandom finite linear groups is poly-logarithmic.
    \end{abstract}

    \maketitle

    \setcounter{tocdepth}{1}
    \tableofcontents

    \section{Introduction}

    \subsection{Statement of results}

    In this paper we characterize sets of bounded tripling in $\GL_n(\F)$,
    where $n$ is bounded and $\F$ is an arbitrary field.
    Here a finite set $A \subset \GL_n(\F)$ is said to be \emph{$K$-tripling} if $|A^3| \leq K|A|$.
    This notion is largely the same as that of a finite \emph{$K$-approximate group},
    which is a symmetric set $A$ containing $1$ such that $A^2$ is covered by at most $K$ translates of $A$.
    Prototypical examples include subgroups and progressions $\{g^n : |n| \leq N\}$,
    as well as certain nilpotent generalizations of progressions called nilprogressions, such as the Heisenberg nilprogression
    \[
        A = \left\{ \begin{pmatrix} 1 & x & z \\ 0 & 1 & y \\ 0 & 0 & 1 \end{pmatrix} : |x|, |y| \leq N, |z| \leq N^2 \right\} \subset \GL_3(\R).
    \]
    In broad qualitative terms, the most general approximate group is an extension of a subgroup by a nilprogression:
    this is the content of the celebrated structure theorem for approximate groups proved by Breuillard, Green, and Tao~\cite{BGT}.

    \begin{theorem}[Breuillard, Green, Tao~\cite{BGT}]
        \label{BGT}
        Let $A$ be a $K$-approximate subgroup of any group $G$.
        Then there are subgroups $H \nsgp \Gamma \leq G$ with the following properties:
        \begin{enumerate}[(1)]
            \item $A$ is covered by $O_K(1)$ cosets of $\Gamma$,
            \item $\Gamma / H$ is nilpotent of rank and step at most $O_K(1)$,
            \item $H$ is contained in $A^4$.
        \end{enumerate}
    \end{theorem}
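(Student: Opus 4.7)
The plan is to follow the ultraproduct--Lie-model strategy pioneered by Hrushovski and developed by Breuillard--Green--Tao. Arguing by contradiction, suppose the theorem fails: then for each $n$ there is a counterexample $A_n$, a $K$-approximate subgroup of some $G_n$. Form the ultraproduct $A = \prod_n A_n / \mathcal U$ inside $G = \prod_n G_n / \mathcal U$ along a non-principal ultrafilter $\mathcal U$. Then $A$ is an internal $K$-approximate subgroup of the nonstandard group $G$, and any structural conclusion we establish for $A$ will, by \L{}o\'s's theorem, transfer back to almost all $A_n$, giving the desired contradiction.

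The next step is to produce a \emph{Lie model} for $A$: a group homomorphism $\pi \colon \langle A \rangle_{\text{int}} \to L$, where $L$ is a connected Lie group, such that $\pi(A)$ has compact closure with nonempty interior, and such that there is a neighborhood $U$ of $1_L$ with $\pi^{-1}(U) \subset A^4$. The existence of such a model is the content of Hrushovski's theorem, whose proof uses the stabilizer theorem in continuous logic together with the Gleason--Yamabe solution of Hilbert's fifth problem to pass from an arbitrary locally compact model to a Lie one. Granting this, the image $\pi(A)$ is an approximate group inside a finite-dimensional connected Lie group.

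Once inside a Lie group, classical structure theory takes over. Using the Gleason--Yamabe theorem together with the Jordan-type theorem for compact subgroups and the Tits alternative, one shows that a neighborhood of identity in $\pi(\langle A\rangle)$ is contained in a connected solvable, in fact nilpotent, Lie subgroup $N \subset L$ of bounded dimension; non-nilpotent behavior would produce free subgroups and hence exponential growth, contradicting approximate-group axioms. From $N$ one reads off a nilpotent quotient of bounded rank and step. Pulling back through $\pi$, the preimage of a small compact neighborhood is contained in $A^4$ and will serve as $H$; the preimage of $N$ will give $\Gamma$; and since $\pi(A)$ is precompact, $A$ is covered by $O_K(1)$ cosets of $\Gamma$. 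Normality of $H$ in $\Gamma$ is ensured by choosing $H$ to be a conjugation-invariant neighborhood in the Lie model.

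The main obstacle is the construction of the Lie model itself. One must manufacture, purely from the approximate-subgroup axioms and some model-theoretic compactness, a continuous topological object governing $A$; this is where the deepest input enters, as it requires both a Hrushovski-style stabilizer argument for locally compact groups in the guise of Sanders--Croot--Sisask almost-periodicity, and the no-small-subgroups reduction from the Hilbert-fifth-problem machinery. Once a Lie model exists, the passage to nilpotent structure is relatively soft, and the extraction of $H$, $\Gamma$ satisfying (1)--(3) is essentially bookkeeping through $\pi$.
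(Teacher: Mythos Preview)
The paper does not prove this theorem; it is quoted as a result of Breuillard, Green, and Tao and cited to \cite{BGT}. There is therefore nothing to compare your argument against in this paper. The paper's own contribution is the polynomial-bound variant in bounded rank (\Cref{thm:main}), proved by entirely different, effective methods (Weisfeiler's structure theorem, the Product Theorem, the affine conjugating trick, pivoting, etc.), and the authors explicitly note that the proof of \Cref{BGT} relies on nonstandard analysis and Hilbert's fifth problem and yields no explicit bound.

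Your sketch is a recognizable outline of the actual Breuillard--Green--Tao argument: ultraproduct to a nonstandard approximate group, construction of a locally compact model via a Sanders--Croot--Sisask/stabilizer argument, Gleason--Yamabe to pass to a Lie model, then structure theory of the Lie model to extract the nilpotent quotient. Two points of imprecision are worth flagging. First, the reduction of the Lie model to a nilpotent one in \cite{BGT} does not go through the Tits alternative or ``free subgroups force exponential growth''; it uses an escape/trapping argument (if the Lie model is not nilpotent one finds elements escaping any compact set under iterated commutators, contradicting precompactness of $\pi(A)$). Second, $H$ is not literally the preimage of a neighborhood but rather the kernel of the map to a suitable nilpotent Lie quotient; the inclusion $H\subset A^4$ comes from the ``good model'' property $\pi^{-1}(U)\subset A^4$ together with the fact that $H$ is contained in the preimage of such a $U$. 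These are refinements rather than gaps: your plan is essentially the correct one for the cited theorem.
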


    Breuillard, Green, and Tao described the statement of \Cref{BGT} as the \emph{Helfgott--Lindenstrauss conjecture}.
    However, there is some inconsistency in the usage of this term.
    \Cref{BGT} corresponds closely to the essentially qualitative conjecture made by Lindenstrauss (Lindenstrauss, personal communication).
    On the other hand the conjecture of Helfgott~\cites{helfgott-SL3, tao-blog}
    predicted \emph{polynomial bounds}, at least for groups in \emph{bounded rank}.
    This was later formulated more precisely in \cite{gill-helfgott} as well as \cite{helfgott--perspectives}.

    Unfortunately the proof of \Cref{BGT} depends on nonstandard analysis as well as the solution to Hilbert's fifth problem,
    and there seems to be no way to deduce an explicit bound for the number of cosets of $\Gamma$ required to cover $A$
    (the bounds on the rank and step of $\Gamma / H$ are explicit and benign: see \cite{BGT}*{Remark~1.9}).
    However, it is now known~\cites{breuillard-tointon, eberhard-arxiv} (see also \Cref{sec:examples}) that the number of cosets required is not polynomial in general in high rank.

    In this paper we prove Helfgott's original conjecture.\footnote{
      This version of our paper differs significantly from the first
      version that appeared on the arXiv. In this version we prove the
      normality of $\Gamma$ in $G$, which requires significant
      generalization of our earlier methods. We had to extend
      results about subgroups to results about sections.
      See \Cref{sec:outline-paper} for more details.}
    That is, we give a version of the Breuillard--Green--Tao theorem with polynomial bounds in bounded rank, over arbitrary fields.

    \begin{theorem}
        \label{thm:main}
        Let $\F$ be a field of characteristic $p \geq 0$ and let $A \subset \GL_n(\F)$ be finite, nonempty, symmetric, and $K$-tripling, where $K \ge 2$.
        Let $G = \gen{A}$.
        Then there are subgroups $H \nsgp \Gamma \nsgp G$ such that
        \begin{enumerate}[(1)]
            \item $A$ is covered by $K^{O_n(1)}$ cosets of $\Gamma$,
            \item $\Gamma / H$ is nilpotent of step at most $n-1$,
            \item $H$ is contained in $A^{O_n(1)}$.
        \end{enumerate}
        Moreover, if $p > 0$ then $H$ has a perfect soluble-by-$\Lie^*(p)$ normal subgroup $P$ contained in a translate of $A^3$ such that $H/P$ is a $p$-group, and if $p = 0$ then $H$ is trivial.
    \end{theorem}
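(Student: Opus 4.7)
The plan is induction on $n$. The two main algebraic--geometric tools are the Zariski closure $\mathbf{H} = \overline{\gen{A}}^{\mathrm{Zar}} \le \GL_n$ and a Larsen--Pink-type inequality of the form $|A \cap \mathbf{V}(\F)| \le K^{O_n(1)}|A|^{\dim \mathbf{V}/\dim \mathbf{H}}$ for each closed subvariety $\mathbf{V} \subseteq \mathbf{H}$. Such dimension-controlled counting is what allows \emph{polynomial} rather than merely $O_K(1)$ covering estimates, replacing the Hilbert's-fifth and ultraproduct machinery used by Breuillard--Green--Tao. The bounded-rank hypothesis is essential here: the degrees of the relevant subvarieties are bounded in terms of $n$, so the implied constants in Larsen--Pink depend only on $n$.

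With these tools in place, I apply the Levi decomposition $\mathbf{H} = \mathbf{L} \ltimes \mathbf{R}$ and treat the reductive and solvable parts separately. For the reductive part $\mathbf{L}$, a quantitative Product Theorem for finite simple groups of Lie type of bounded rank is applied to the projection of $A$ to each simple factor: either the projection is confined to a proper subvariety, allowing induction on dimension, or $|A^3| \ge |A|^{1+c_n}$, contradicting the $K$-tripling hypothesis. This forces $A$ to be covered by $K^{O_n(1)}$ cosets of a normal subgroup. The solvable radical $\mathbf{R}$ is handled with Freiman-type theory for nilpotent and solvable linear groups (along the lines of Breuillard--Green), iterating up a composition series of $\mathbf{R}$ to produce a section $\Gamma/H$ of nilpotency class at most $n-1$ (matching the class bound for unipotent subgroups of $\GL_n$) with $H \subseteq A^{O_n(1)}$. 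The \emph{moreover} clause then falls out of the classification of bounded-rank finite linear groups: in characteristic $p > 0$ the subgroup $H$ is built from a perfect soluble-by-$\Lie^*(p)$ kernel $P$ (explicitly produced from a short product of elements of $A$, so $P \subseteq gA^3$ for some $g$) and a $p$-group quotient coming from the unipotent radical; in characteristic zero there are no nontrivial unipotent finite subgroups and no $\Lie^*(p)$ factors, forcing $H = 1$.

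The main obstacle I expect is the \textbf{normality of $\Gamma$ in $G$}, rather than merely in $\gen{A}$. As the authors' footnote emphasizes, achieving this forces the entire induction to be carried out on \emph{sections} $H \nsgp \Gamma$ rather than on single subgroups: every invariant, every Zariski-closure argument, and every Larsen--Pink escape estimate has to be $G$-equivariant, not just $A$-equivariant. Concretely, one needs section-level analogues of strong approximation and of the escape-from-subvarieties and pivot arguments, and one must avoid losing normality when passing to commensurable subgroups or when taking closures. This is where I expect the proof to become most technically involved---precisely as the authors indicate in noting that a substantial revision of their earlier strategy was needed to reach this stronger conclusion.
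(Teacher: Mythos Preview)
Your outline is not the paper's argument, and several of the steps you gesture at are precisely the places where the known machinery breaks down over non-prime finite fields.

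First, a small but telling slip: you write that the main obstacle is normality of $\Gamma$ in $G$ ``rather than merely in $\gen{A}$'', but $G = \gen{A}$ by definition. The genuine issue flagged in the footnote is normality of $\Gamma$ in $G$ \emph{at all}, as opposed to merely getting a subgroup $\Gamma$ that works without any normality claim; the paper achieves this by working systematically with \emph{sections} and by proving a trigonalization theorem for soluble sections with explicit control on the Weyl group $N_L(\Sigma)/C_L(\Sigma/O_p(\Sigma))$.

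More substantively, the Zariski-closure / Larsen--Pink / Levi-decomposition strategy you propose is essentially how the characteristic-zero case was done in \cite{BGT-linear}, and it is close in spirit to how Gill--Helfgott handled $\GL_n(\F_p)$ using Nori's correspondence. The paper explains in its introduction why this route does \emph{not} extend to $\GL_n(\F_q)$: the Nori correspondence fails for prime-power $q$, and chains of unipotent subgroups are unbounded, so ``iterating up a composition series of $\mathbf{R}$'' in the algebraic group is not a bounded process on the finite side. You have not said how you would control this.

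The paper's actual proof has two halves, each with a new idea your outline lacks. In the general-to-soluble step, Weisfeiler's theorem plus the Product Theorem give an ``upside-down'' structure: $\Gamma/\Sol(\Gamma)$ is covered by $A^6$, with $\Sol(\Gamma)$ at the bottom. Turning this right-side-up --- i.e.\ showing the perfect core $P$ itself is covered by $A^{O_n(1)}$ --- is not automatic and is done with the \emph{affine conjugating trick}: if $V$ is abelian, $G$ is quasirandom and $d$-generated, and $A \subset V$ is $G$-invariant symmetric with small tripling, then $A^{7d} \supset [V,G]$. This replaces the Nori correspondence. In the soluble-to-nilpotent step, the paper does not use generic Freiman theory but rather the Gill--Helfgott pivoting theorem together with a new growth lemma for images of $T$-invariant bilinear maps, organized around a ``no-small-roots'' dichotomy for the trigonalizable section and a pigeonholing argument to eliminate small roots. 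None of these ingredients appear in your sketch, and each is doing real work that a Levi decomposition plus escape-from-subvarieties does not supply over $\F_q$.
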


    Here we write $\Lie(p)$ for the class of finite simple groups of Lie type of characteristic $p$
    and we say $G$ is in $\Lie^*(p)$ if it is a finite direct product of members of $\Lie(p)$.
    For convenience if $p = 0$ then $\Lie(p)$ is defined to be empty and $\Lie^*(p)$ consists of just the trivial group.

    \begin{remark}\leavevmode
    \begin{enumerate}[(a)]
        \item
        Without loss of generality $H = \gamma_n(\Gamma)$, the $n$th term of the lower central series of $\Gamma$.
        Similarly $P$ must be the perfect core of $\Gamma$ (the last term of the derived series).
        Thus we may assume $H$ and $P$ are characteristic in $\Gamma$ and normal in $G$.
        \item
        The rank of $\Gamma / H$ cannot be controlled without sacrificing normality of $\Gamma$ in $G$.
        In fact in \Cref{sec:examples} we give an example in which $\Gamma$ cannot be chosen to be finitely generated.
        However if $\Gamma$ is not required to be normal then we may
        assume $\Gamma = \gen{A^6 \cap \Gamma}$ (see \Cref{lem:covering-observation}), and by applying the
        result of Tointon~\cite{tointon-nilpotent} we may choose $H
        \subset A^{K^{O_n(1)}}$ so that $P \le \gamma_n(\Gamma) \leq H \nsgp
        \Gamma$, $\Gamma / H$ has rank $K^{O_n(1)}$, and $A$ is
        still covered by $K^{O_n(1)}$ cosets of $\Gamma$.
    \end{enumerate}
    \end{remark}

    Roughly half the proof of \Cref{thm:main} consists of establishing the existence of $P$. This preliminary ``soluble version'' was previously announced in \cite{PS-survey}.

    \begin{theorem}
        \label{thm:main-soluble}
        Let hypotheses be as \Cref{thm:main}.
        Then there are subgroups $P \nsgp \Gamma \nsgp \langle A \rangle$ such that
        \begin{enumerate}[(1)]
            \item $A$ is covered by $K^{O_n(1)}$ cosets of $\Gamma$,
            \item $\Gamma / P$ is soluble of derived length $O(\log n)$,
            \item $P$ is perfect, soluble-by-$\Lie^*(p)$, and contained in a translate of $A^3$.
        \end{enumerate}
    \end{theorem}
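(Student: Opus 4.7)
The plan is to analyse the Zariski closure $\bar G$ of $\gen A$ in $\GL_n$ and extract $\Gamma$ and $P$ from the algebraic-group structure of $\bar G$, using the Product Theorem for finite simple groups of Lie type in bounded rank as a black box. After a preliminary bounded-index reduction (via a Jordan-type theorem in characteristic zero or its Larsen--Pink analogue in characteristic $p$), we may assume $\bar G$ is connected. Its solvable radical $R$ is normal, and the semisimple quotient $\bar G/R$ decomposes, up to central isogeny, as a direct product $\prod_i S_i$ of simple algebraic groups, each of rank at most $n$.

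For each simple factor $S_i$ the image $A_i$ of $A$ is an approximate subgroup. The Product Theorem yields a sharp polynomial dichotomy: either $A_i$ is covered by $K^{O_n(1)}$ cosets of a proper algebraic subgroup of $S_i$, or $A_i^{O_n(1)}$ already contains a section of $S_i(\F)$ belonging to $\Lie(p)$. One applies this dichotomy factor by factor, shrinking $\bar G$ at each step by pulling back proper subgroups arising from the first alternative. After $O_n(1)$ rounds one arrives at a connected algebraic subgroup $\bar H \leq \bar G$ for which the ``second alternative'' factors $\{S_i\}_{i\in I}$ constitute the whole semisimple quotient of $\bar H$.

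Let $\Gamma = \bar H \cap \gen A$ and let $P$ be the perfect core of $\Gamma$. Since $\bar H$ is cut out by Zariski data preserved under conjugation by $\gen A$, we have $\Gamma \nsgp \gen A$, and $A$ is covered by $K^{O_n(1)}$ cosets of $\Gamma$ by construction. The quotient $\Gamma/P$ embeds into a solvable algebraic subgroup of $\GL_n(\F)$ and hence has derived length $O(\log n)$ by Zassenhaus' classical bound for solvable linear groups. The perfect core $P$ surjects onto the $\Lie^*(p)$ section $\prod_{i\in I} S_i/\Sol$ with soluble kernel, so $P$ is perfect and soluble-by-$\Lie^*(p)$. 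The containment $P \subset gA^3$ is then obtained by a pivoting argument: the Product Theorem supplies, inside each factor $i \in I$, an element of $A_i^3$ that together with bounded-length conjugates generates $S_i/\Sol$, and an escape-from-subvarieties argument lifts these data to produce a single $g \in \gen A$ with $P \subset gA^3$.

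The principal obstacle is the combination of normality of $\Gamma$ in $\gen A$ with the polynomial coset bound $K^{O_n(1)}$. Conjugation by elements of $A$ can permute the $S_i$ and act on them by outer or, in characteristic $p$, Frobenius-twisted automorphisms, so the Product Theorem must be applied not to individual factors but to full Galois orbits of factors, and the iterative dimension-reduction step has to be carried out simultaneously on sections rather than on subgroups, as emphasised in the authors' footnote. A secondary obstacle is packaging the output of the Product Theorem so that $P$ genuinely sits inside a single translate $gA^3$ rather than in $A^{O_n(1)}$: this requires simultaneous tracking of the escape pivots across all factors in $I$ at once.
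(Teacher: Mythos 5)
Your proposal takes a route (Zariski closure of $\gen A$, dimension reduction via simple algebraic factors) that the paper deliberately abandoned, and it contains a genuine gap at the single hardest point of the proof.

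The paper never takes a Zariski closure. It first finitizes via Mal'cev's residual finiteness theorem (\Cref{prop:main-soluble-finitization}), then works entirely inside the finite group: Weisfeiler's theorem (\Cref{thm:weisfeiler}) replaces the algebraic-group structure theory, and the Product Theorem is applied directly to the simple factors of $\Gamma/\Sol(\Gamma)$ rather than to images in simple algebraic groups. One reason the authors avoid your route is explained in their ``relation to previous literature'' discussion: the algebraic-group approach over $\F_p$ hinges on Nori's correspondence between $p$-generated finite subgroups and closed algebraic subgroups, and that machinery breaks down over $\F_q$ with $q$ a prime power. Your sketch silently assumes the algebraic-group dichotomy for the image of $A$ in each $S_i$ behaves as in the $\F_p$ theory, which is exactly what fails.

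The more serious gap is at ``$P\subset gA^3$''. You treat it as a packaging step (``escape-from-subvarieties lifts these data''), and you describe the obstacle as merely getting a \emph{single} translate rather than $A^{O_n(1)}$. But promoting $P\subset A^{O_n(1)}$ to $P\subset gA^3$ is the easy part: it is a three-line quasirandomness trick (\Cref{lem:gowers-trick}). The hard part, which your plan does not engage with at all, is to show that $A^{O_n(1)}$ covers the soluble radical $\Sol(P)$ in the first place. After the weak Ore conjecture and \Cref{cor:weisfeiler}, this reduces to covering a $p$-group $N=[P,\Sol(P)]$ with $[P,N]=N$, and the paper's entire ``affine conjugating trick'' (\Cref{lem:affine-conjugating-trick-intro}) together with \Cref{prop:GN-prop} is devoted to this. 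Escape from subvarieties is a tool for producing elements outside proper subvarieties; it gives no handle on covering a normal $p$-subgroup of the perfect core, and nothing in your proposal substitutes for the affine conjugating trick. Until you supply a mechanism that forces $A^{O_n(1)}\supset \Sol(P)$, the proposal does not prove the theorem.

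A smaller point: you attribute the $O(\log n)$ derived-length bound to Zassenhaus, but Zassenhaus gives only $O_n(1)$; the sharpening to $O(\log n)$ is Newman's, as used in \Cref{lem:P}.
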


    These results generalize and depend on the Product Theorem
    for finite simple groups, obtained independently by Breuillard,
    Green, and Tao~\cite{BGT-linear} and Pyber and Szab\'o~\cite{PS-JAMS}.

    \begin{theorem}[\cite{PS-JAMS}*{Theorem~2}, see also \cite{BGT-linear}*{Corollary~2.4}]
        \label{thm:PS-main}
        Let $L$ be a finite simple group of Lie type of rank $r$ and $A$ a generating set of $L$.
        Then either
        \begin{enumerate}[(1)]
            \item $|A^3| > |A|^{1 + \eps}$, where $\eps = \eps(r)$ depends only on $r$, or
            \item $A^3 = L$.
        \end{enumerate}
    \end{theorem}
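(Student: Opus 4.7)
The plan is to argue by contradiction: assume $|A^3| \leq |A|^{1+\eps}$ and $A^3 \neq L$. Embed $L$ inside the simple algebraic group $\bar G$ over $\bar{\F}_q$ of which it is the group of $\F_q$-rational points; then $A$ generates a Zariski-dense subgroup of $\bar G$, and $\dim \bar G = O_r(1)$. By the Pl\"unnecke--Ruzsa inequalities every bounded power $A^k$ has size $|A|^{1+O(\eps)}$, and we may work throughout inside $A^{O(1)}$, treating it as an approximate subgroup of $L$. The goal is to force $A$ into a proper algebraic subgroup of $\bar G$, contradicting Zariski density.

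The first key tool is \emph{escape from subvarieties}: no proper subvariety of $\bar G$ of bounded degree can contain all of $A^{O(1)}$, since $\gen{A}$ is Zariski dense. Applied to the subvariety of non-regular-semisimple elements, this produces a regular semisimple $s \in A^{O(1)}$ whose connected centralizer is a maximal torus $T \leq \bar G$ of dimension $r$. A further escape argument, applied to the variety of $r$-tuples of pairwise commuting semisimple elements in $\bar G^r$, produces an $r$-tuple in $(A^{O(1)})^r$ sitting inside a common maximal torus conjugate, after which a Plünnecke-type tuple count yields $|A^{O(1)} \cap T^g| \geq |A|^{\Omega_r(1)}$ for some $g \in A^{O(1)}$; conjugating we may assume the pivoted torus is $T$ itself and set $B = A^{O(1)} \cap T$.

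The second key tool is a Larsen--Pink-style dichotomy. Either the conjugates $\{aBa^{-1} : a \in A\}$ produce $\gg |B|$ essentially distinct cosets of $T$ inside $A^{O(1)}$, giving the desired growth $|A^3| \geq |A|^{1+\eps(r)}$; or a positive proportion of $A$ fixes $T$ under conjugation and hence lies in the normalizer $\opr{N}_{\bar G}(T)$, a proper algebraic subgroup of bounded complexity, allowing us to iterate the argument inside that smaller group. The Larsen--Pink structural theorem controls exactly this situation: it asserts that a finite subgroup of $\bar G(\bar{\F}_p)$ which is contained in no proper subgroup of bounded complexity must look, up to bounded-index subgroups, like a Lie-type group of rank $r$. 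After boundedly many iterations of the dichotomy we either accumulate the required growth or conclude that $A$ lies in a proper algebraic subgroup of $\bar G$, the desired contradiction.

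The hardest step is to make the dichotomy genuinely quantitative with an exponent $\eps(r)$ depending only on the rank. Qualitatively, Larsen--Pink describes how a finite subgroup of $\bar G(\bar{\F}_p)$ can meet a proper subvariety of bounded degree, but converting this into a sharp lower bound $|A^3| \geq |A|^{1+\eps(r)}$ uniform in the ground field demands the full Ruzsa calculus of approximate groups together with a careful separate treatment of unipotent elements, which in characteristic $p$ cannot be escaped into tori. Getting the exponents in the pivoting and the iteration to match, so that each round of the dichotomy loses only a controlled factor in $\eps$, is the delicate technical core of both the Pyber--Szab\'o and Breuillard--Green--Tao proofs.
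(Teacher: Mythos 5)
The paper does not prove \Cref{thm:PS-main}; it is taken as a black-box input and cited from \cite{PS-JAMS}*{Theorem~2} and \cite{BGT-linear}*{Corollary~2.4}, so there is no internal proof to compare your sketch against. Your outline does track the broad shape of the cited arguments: embed $L$ in the ambient simple algebraic group, use escape from subvarieties to produce a regular semisimple element, and play non-concentration on subvarieties against concentration on tori. But two of the specific mechanisms you describe are not what those proofs do.

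First, there is no escape argument for $r$-tuples of pairwise commuting semisimple elements. A single regular semisimple element $s$ already determines a unique maximal torus $T = C_{\bar G}(s)^\circ$, and both proofs work directly with $T$ after locating $s \in A^{O(1)}$ by escape. Second, and more substantively, the dichotomy you state --- ``either the conjugates $\{aBa^{-1}\}$ spread over many cosets of $T$, or a positive proportion of $A$ normalizes $T$ and one recurses inside $N_{\bar G}(T)$'' --- is not the engine, and there is no iteration into proper subgroups. The Larsen--Pink \emph{structure} theorem you invoke (finite subgroups not in a bounded-complexity proper subgroup are essentially Lie-type) is the wrong tool; what is actually used is the Larsen--Pink-type \emph{non-concentration inequality} for approximate subgroups, $|A^k \cap V| \ll |A^3|^{\dim V / \dim \bar G}$ for any closed $V \subsetneq \bar G$ of bounded complexity. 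This upper bound is combined with a lower bound on $|A^{O(1)} \cap T|$ obtained by fiber-counting the conjugation map $g \mapsto g^{-1} s g$, and the tension between the two across the family of conjugate tori $\{T^a : a \in A\}$ produces growth directly, without any recursion. Your closing remark that unipotent elements need separate treatment in positive characteristic is correct, but it understates the matter: in both \cite{PS-JAMS} and \cite{BGT-linear} the bulk of the technical difficulty (and the bulk of the dependence on $r$ in $\eps(r)$) lies precisely in making the non-concentration estimate and the torus-pivoting step quantitative and uniform in $q$.
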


    A key new ingredient in this paper is something we call the ``affine conjugating trick''.
    See the outline below for a description.
    As a further application of this trick, we prove a diameter bound for quasirandom groups.

    \begin{theorem} \label{thm:polylog-dimeter-intro}
        For each positive integer $n$ there are positive numbers
        $K=K(n)$ and $c=c(n)$
        with the following property.
        Let $\F$ be a field and $G\le\GL_n(\F)$ be a $K$-quasirandom finite
        subgroup. Then the Cayley graph of $G$ with respect to any
        generating set has diameter at most $\br{\log|G|}^c$.
    \end{theorem}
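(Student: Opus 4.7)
\emph{Proof plan.} Let $A$ be a generating set of $G$; replacing $A$ by $A \cup A^{-1} \cup \{1\}$ changes the diameter by at most a bounded factor, so we may assume $A$ is symmetric and contains $1$. The plan is to combine an iterated tripling argument of Helfgott type with the structure theorem \Cref{thm:main}, and then use $K(n)$-quasirandomness to collapse all extraneous structure. Fix a small $\eps = \eps(n) > 0$ and a large $K(n)$, and set $A_0 = A$, $A_{j+1} = A_j^3$. At each step either \emph{(growth)} $|A_j^3| \ge |A_j|^{1+\eps}$, so $\log|A_j|$ increases by a factor of at least $1+\eps$, or \emph{(structure)} $|A_j^3| \le |A_j|^{1+\eps}$, so $A_j$ is $|A_j|^\eps$-tripling. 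Since $|A_j| \le |G|$, growth can occur for at most $j = O_n(\log\log|G|)$ consecutive steps before either $A_j = G$ or structure is triggered.

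In the structure case we apply \Cref{thm:main} to $A_j$ with tripling parameter $K := |A_j|^\eps$, obtaining characteristic subgroups $P \le H \le \Gamma \nsgp G$ with $A_j$ covered by $|A_j|^{O_n(\eps)}$ cosets of $\Gamma$, with $\Gamma/H$ nilpotent of step at most $n-1$, with $H/P$ a $p$-group (where $p = \chr \F$), and with $P$ contained in a translate of $A_j^3$. We now use $K(n)$-quasirandomness to collapse this. The main task is to force $\Gamma = G$: this is the crux, and is proved via the \emph{affine conjugating trick}, which from the coset action of $G$ on $G/\Gamma$ combined with the linear embedding $G \hookrightarrow \GL_n(\F)$ manufactures a complex representation of $G$ whose dimension is bounded by a function of $n$ alone; choosing $K(n)$ larger than this bound then forces $G/\Gamma$ to be trivial. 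Granted $\Gamma = G$, the normal quotient $G/H$ is nilpotent of bounded step, so it is either trivial or admits a nontrivial abelian, hence one-dimensional, complex quotient, contradicting quasirandomness; similarly $G/P = H/P$ is a $p$-group whose center would provide a forbidden abelian quotient unless it is trivial. Therefore $G = P$ lies in a translate of $A_j^3$, which together with the symmetry of $A_j$ and $1 \in A_j$ gives $G = A_j^{O_n(1)}$, so the Cayley diameter is at most $O_n(3^j) \le (\log|G|)^{O_n(1)}$.

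The principal obstacle is the affine conjugating trick used to force $\Gamma = G$. A priori $G/\Gamma$ is only known to be generated by $|A_j|^{O_n(\eps)}$ elements, which does not on its own force triviality: for example, $\SL_2(\F_q)$ is $2$-generated yet highly quasirandom. The key new idea is that the linear structure of $G$ together with the coset action on $G/\Gamma$ can be leveraged to build a low-dimensional complex representation of $G$, which $K(n)$-quasirandomness then kills. Once this input is secured, the remaining quasirandomness arguments, via abelian quotients of the nilpotent and $p$-group layers, are routine bookkeeping, and the iterated growth wrapper supplies the polylogarithmic dependence on $|G|$.
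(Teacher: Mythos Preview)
Your proposal has a genuine gap at exactly the step you flag as the crux: forcing $\Gamma = G$. The mechanism you invoke does not exist. The affine conjugating trick (\Cref{lem:affine-conjugating-trick-intro}) is the statement that if $V \rtimes G$ is a semidirect product with $V$ abelian and $G$ finite, $d$-generated, and $K^{21}$-quasirandom, and $A \subset V$ is a symmetric $G$-invariant generating set with $|A^3|\le K|A|$, then $A^{7d}\supset[V,G]$. It does not manufacture complex representations, and there is no general construction that turns the coset action of $G$ on $G/\Gamma$ plus the embedding $G \hookrightarrow \GL_n(\F)$ into an $O_n(1)$-dimensional \emph{complex} representation of $G$; the embedding is in characteristic $p$, not over $\C$, and $G/\Gamma$ can itself be a large simple group of Lie type. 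Nothing in your scheme bounds $[G:\Gamma]$: all you know is that the image of $A_j$ in $G/\Gamma$ has size $|A_j|^{O_n(\eps)}$, and, as you yourself observe, quasirandom groups can be generated by tiny sets. The subsequent steps (once $\Gamma=G$, collapsing the nilpotent layer $\Gamma/H$ and the $p$-group layer $H/P$ by quasirandomness) are fine, but they never get off the ground.

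The paper's proof takes an entirely different route and does \emph{not} pass through \Cref{thm:main} or a growth--structure dichotomy. It applies Weisfeiler's theorem directly to the quasirandom group $G$: for $K(n)$ large this gives $G/\Sol(G)\in\Lie^n(p)$ and a normal $p$-subgroup $P=[G,\Sol(G)]$ with $P=[G,P]$ and $[\Sol(G):P]=O_n(1)$. The Product Theorem then shows $G/P$ has poly-logarithmic diameter, so the whole difficulty is to cover the $p$-group $P$. This is where the real content lies: using McNinch's linearizability theorem and Steinberg's description of modular irreducibles, the paper proves (\Cref{thm:linear-actions-are-bounded}) that $P/[P,P]$ admits an $O_n(1)$-\emph{bounded} $G$-invariant normal chain of length $<n^2$. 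The affine conjugating trick enters only here, and in a completely different guise from what you describe: it is applied to the action of each quasisimple factor $S_i$ of $G/O_p(G)$ on its small irreducible $\F_pS_i$-modules, showing that any nonzero vector generates the module under $(S_i\cdot)^{\pm O_n(1)}$. An induction along the bounded chain (\Cref{lem:application-induction-step}) converts this into a $(\log|P|)^{O_n(1)}$ bound for filling $P/[P,P]$, and the bounded nilpotency class of $P$ finishes.
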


    For $p$-generated perfect subgroups of
    $\SL_n(\F_p)$ this was proved in \cite{PS-JAMS}.
    Like the proof of \Cref{thm:main} in the special case of subsets of
    $\GL_n(\F_p)$
    (\cite{gill-helfgott} building on \cites{PS-arxiv,PS-JAMS}),
    the proof of this special case
    depends in an essential way  on the Nori
    correspondence between $p$-generated subgroups of $\SL_n(\F_p)$
    and certain closed subgroups of $\SL_n(\F_p)$.
    The fact that the affine conjugating trick can be used to replace
    the Nori correspondence in the proof of these two related results,
    and used in the proof of their extension to arbitrary fields, clearly shows
    the power of this trick.

    \subsection{Relation to previous literature}

    In characteristic zero, \Cref{thm:main} was first established by Breuillard, Green, and Tao~\cite{BGT-linear}.
    We include this case mainly for the sake of having a uniform argument for all fields, and because it is hardly any extra work, but it is not the important contribution of this paper.
    The fact that the arbitrary field case reduces to the case of finite fields is notable, and validates the opinion of Gill and Helfgott (see~\cite{gill-helfgott}*{Section~1.3}) that finite fields contain the heart of the matter.

    The prime finite field case of \Cref{thm:main} was proved by Gill and Helfgott~\cite{gill-helfgott},
    conditional on the prime finite field case of \Cref{thm:main-soluble},
    a result of the third and fourth authors which has not previously appeared in published form.
    It appears in an unpublished part of an earlier version \cite{PS-arxiv} of \cite{PS-JAMS} as Corollary 105,
    but the published version \cite{PS-JAMS} contains only a preliminary result, Lemma 73, on perfect $p$-generated subgroups of $\SL_n(\F_p)$ (Theorem 85 in \cite{PS-arxiv}).
    The publication of Corollary 105 has been deferred until now, in anticipation of the full version above applying to all fields uniformly.

    The results proved in \cites{PS-arxiv,PS-JAMS} depend in an essential way on the Nori correspondence between $p$-generated subgroups of $\SL_n(\F_p)$ and certain closed subgroups of $\SL_n(\overline{\F_p})$.
    Since the Nori theory does not extend to subgroups of $\SL_n(\F_q)$, $q$ a prime power, we had to devise an entirely different argument to prove \Cref{thm:main-soluble}.
    We use arguments based on quasirandomness and a new ``affine conjugating trick'', to be described below.

    Similarly, the method of \cite{gill-helfgott} depends on the theory of algebraic groups over $\F_p$, and several critical features of this theory break down for fields of prime-power order, such as boundedness for chains of unipotent subgroups.
    Accordingly, while parts of our method are inspired by \cite{gill-helfgott} (in particular their pivoting theorem is a key tool) our deduction of \Cref{thm:main} from \Cref{thm:main-soluble} uses many entirely new tools, such as a growth lemma for images of bilinear maps and a more general descent argument.
    In fact, the subgroup $\Gamma$ arises in a slightly different way in our method: we do not take a full root kernel but we use a tricky pigeonholing argument to identify an appropriate normal subgroup.

    \subsection{Outline of the paper} \label{sec:outline-paper}

    See \Cref{sec:notation} for notation. In \Cref{sec:examples} we give a few examples that illustrate some of the subtleties of \Cref{thm:main} and \Cref{thm:main-soluble}.

    \Cref{sec:toolbox} recalls some tools that may be familiar to experts:
    basic group theory and arithmetic combinatorics (or nonabelian additive combinatorics), quasirandomness, and the action of $p'$-groups on $p$-groups.

    In \Cref{sec:finitization} we give a pair of short arguments that immediately reduce \Cref{thm:main,thm:main-soluble} to the case of finite fields.
    The key tool here is the well-known theorem of Mal'cev asserting a strong form of residual finiteness for finitely generated linear groups.
    In the rest of the paper we take $\F$ to be finite.

    In \Cref{sec:trig} we prove some results about trigonalization of soluble subgroups and sections of $\GL_n(\F)$.
    A subgroup is called trigonalizable if it is conjugate to a group of upper-triangular matrices over some extension field.
    A section is called trigonalizable if it is the image of a trigonalizable subgroup.
    Another well-known theorem of Mal'cev asserts that soluble subgroups of $\GL_n(\F)$ are virtually trigonalizable.
    We prove a variant for soluble sections that moreover provides crucial control over the ``Weyl group'' of the section (\Cref{prop:trig+}).
    This control is essential for obtaining normality of $\Gamma$ in \Cref{thm:main}.

    The rest of the paper contains the body of the proof of \Cref{thm:main-soluble,thm:main}.
    \Cref{sec:general-to-soluble} covers the proof of \Cref{thm:main-soluble},
    while \Cref{sec:soluble-to-nilpotent} covers
    the deduction of \Cref{thm:main}.

    The starting points of \Cref{sec:general-to-soluble} (the general-to-soluble reduction) are the Product Theorem and Weisfeiler's structure theorem (\Cref{thm:weisfeiler}) for finite linear groups.
    Using these tools in combination with quasirandomness arguments and a few other facts about finite simple groups, we establish the following initial structure theorem, which can be viewed as an ``upside-down'' version of \Cref{thm:main-soluble}.

    \begin{theorem}
        \label{thm:tyukszem-intro}
        Let $\F$ be a finite field of characteristic $p > 0$.
        Let $A \subset \GL_n(\F)$ be a symmetric subset such that $|A^3| \leq K|A|$, where $K \geq 2$.
        Then there is a normal subgroup $\Gamma \nsgp \langle A \rangle$ such that
        \begin{enumerate}[(1)]
            \item $A$ is covered by $K^{O_n(1)}$ cosets of $\Gamma$,
            \item $\Gamma$ is soluble-by-$\Lie^*(p)$,
            \item $\Gamma / \Sol(\Gamma)$ is covered by $A^6$.
        \end{enumerate}
    \end{theorem}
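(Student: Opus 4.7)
The plan is to start from Weisfeiler's Jordan-type theorem for finite linear groups to produce a normal subgroup $\Gamma_0$ of bounded index with the required Lie-type structure, then pigeonhole to obtain a well-behaved symmetric subset of $\Gamma_0$, and finally apply the Product Theorem together with quasirandomness to verify the cover condition~(3).

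Applying Weisfeiler's theorem to $G := \langle A\rangle \le \GL_n(\F)$ produces a normal subgroup $\Gamma_0 \nsgp G$ of index $O_n(1)$ such that $\Gamma_0/\Sol(\Gamma_0)$ is a central product of finite simple groups of Lie type in characteristic $p$ of rank bounded in terms of $n$. Any $\Gamma$ satisfying $\Sol(\Gamma_0) \le \Gamma \le \Gamma_0$ automatically satisfies condition~(1), since $A$ is covered by $[G:\Gamma_0] = O_n(1) \le K^{O_n(1)}$ cosets of $\Gamma_0$ and hence of $\Gamma$; condition~(2) is inherited from $\Gamma_0$ whenever $\Sol(\Gamma_0) \le \Gamma$. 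The task reduces to refining $\Gamma_0$ to a $G$-normal subgroup $\Gamma$ that satisfies~(3).

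I pigeonhole to choose $a \in A$ with $|a^{-1}A \cap \Gamma_0| \ge |A|/O_n(1)$ and set $A_1 := a^{-1}A \cap \Gamma_0 \subseteq \Gamma_0$. Since $A = A^{-1}$ we have $a^{-1} \in A$, and a short induction using $A a^{-1} \subseteq A^2$ gives $A_1^k \subseteq A^{2k}$ for every $k \ge 1$; in particular $A_1^3 \subseteq A^6$. By Pl\"unnecke--Ruzsa, $A_1$ is $K^{O_n(1)}$-tripling. Let $\pi\colon \Gamma_0 \to L := \Gamma_0/\Sol(\Gamma_0)$ be the projection, write $L = L_1 \cdots L_m$ as a central product of its simple factors, and set $\bar A_1 := \pi(A_1)$. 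Decompose $\{1,\dots,m\}$ into $G$-orbits $\Omega$ under the permutation action induced by conjugation; for each $\Omega$ let $H_\Omega \le \prod_{i \in \Omega} L_i$ be the smallest $G$-invariant subgroup containing $\pi_\Omega(\bar A_1)$. Put $H := \prod_\Omega H_\Omega$ and $\Gamma := \pi^{-1}(H)$; then $\Gamma$ is $G$-normal, and it suffices to show $\pi(A_1^3) \supseteq H$.

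For this last step I would apply \Cref{thm:PS-main} to each simple factor $L_i$ appearing inside $H_\Omega$: the bounded tripling of $\bar A_1$ excludes the growth alternative of the Product Theorem, so $\pi_i(\bar A_1)^3$ already equals the normal closure of $\pi_i(\bar A_1)$ in $L_i$. Then quasirandomness of simple groups of Lie type, in its Gowers--Nikolov--Pyber form, promotes this factor-wise surjectivity to joint surjectivity of $\bar A_1^3$ onto $H$, and combined with $A_1^3 \subseteq A^6$ this yields~(3). The main obstacle is precisely this passage from factor-wise to joint surjectivity: diagonal subgroups in products of simple groups of Lie type can obstruct the naive product argument and must be ruled out by quasirandomness bounds sensitive to the ranks of the individual $L_i$. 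A related subtlety is that the normal closure of $\pi_i(\bar A_1)$ in $L_i$ may itself be a proper subgroup, requiring a conjugation-saturation step costing only an $O_n(1)$ power of $A$; I expect this is the ``tricky pigeonholing'' alluded to in the paper's outline.
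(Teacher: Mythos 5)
Your high-level strategy --- Weisfeiler for a bounded-index normal subgroup $\Gamma_0$, then the Product Theorem plus quasirandomness on the simple factors --- matches the paper's, but the crucial step fails.

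The central error is the claim that ``the bounded tripling of $\bar A_1$ excludes the growth alternative of the Product Theorem.'' It does not. The growth alternative in \Cref{thm:PS-main} is $|B^3|>|B|^{1+\eps}$; combining with $|B^3|\le K^{O_n(1)}|B|$ gives only $|B|\le K^{O_n(1)/\eps}$, not a contradiction. So for each simple factor $L_i$ the actual dichotomy is: either $\pi_i(B)^3=L_i$, or $|\pi_i(B)|\le K^{O_n(1)}$. The second alternative genuinely occurs and is exactly what your construction cannot handle. Indeed, if $\pi_i(\bar A_1)$ is nontrivial of size $\le K^{O_n(1)}$ for some $i\in\Omega$, then since each $L_i$ is nonabelian simple and $G$ permutes the factors of $\Omega$ transitively, the smallest $G$-invariant subgroup of $\prod_{i\in\Omega}L_i$ containing $\pi_\Omega(\bar A_1)$ is the \emph{entire} product $\prod_{i\in\Omega}L_i$; but then $\pi(A_1^3)$, being small, cannot cover $H_\Omega$. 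So your definition of $\Gamma$ is internally inconsistent with the target $\pi(A_1^3)\supseteq H$ precisely in the problematic case. The correct move --- and this is what the paper does --- is the opposite: \emph{shrink} $\Gamma$ rather than enlarge $H$. One partitions the set $\Ell$ of simple factors into $G$-invariant parts $\Ell_1$ (where $\pi_i(B)^3=L_i$) and $\Ell_2$ (where $|\pi_i(B)|\le K^{O_n(1)}$), takes $\Gamma$ inside the preimage of $\prod_{L\in\Ell_1}L$, and charges the small projections in $\Ell_2$ to the coset count in condition (1). Quasirandomness (via \Cref{lem:dense-subset} and \Cref{lem:direct-product}) then supplies the joint surjectivity onto the large part that you correctly flagged as a subtlety.

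Two secondary problems. First, your justification for condition (1) --- that covering by $O_n(1)$ cosets of $\Gamma_0$ implies covering by $O_n(1)$ cosets of $\Gamma$ --- is false as stated, since a coset of $\Gamma_0$ breaks into $[\Gamma_0:\Gamma]$ cosets of $\Gamma$; the claim can be rescued via \Cref{lem:orbit-stab} once $A_1\subset\Gamma$ and $|A_1|\ge|A|/O_n(1)$, but that is a different argument. Second, the slice $A_1=a^{-1}A\cap\Gamma_0$ need not generate $\Gamma_0$, so $\pi_i(\bar A_1)$ need not generate $L_i$, and \Cref{thm:PS-main} (which is stated for generating sets) is not directly applicable; the paper instead uses $B=A^{3m}\cap\Gamma_0$ with $m=[G:\Gamma_0]$, which generates $\Gamma_0$ by Schreier's lemma. (The ``tricky pigeonholing'' mentioned in the paper's outline concerns the deduction of \Cref{thm:main} from \Cref{thm:main-soluble} in Section~7, not this step.)
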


    The main business of \Cref{sec:general-to-soluble} is to turn this structure ``right side up''.
    To do this we must show that the perfect core $P$ of $\Gamma$ is covered by $A^{O_n(1)}$ (quasirandomness upgrades this to $A^3$ automatically: see \Cref{lem:gowers-trick}).
    An argument based on the weak Ore conjecture shows that $A^{O_n(1)}$ covers $P / \Sol(P)$,
    and it follows from Weisfeiler's theorem that $N = [P, \Sol(P)]$ is a $p$-subgroup such that $[\Sol(P) : N] \leq O_n(1)$ (\Cref{cor:weisfeiler}),
    so the key is to show that $A^{O_n(1)}$ covers $N$.
    For this we use the following ``affine conjugating trick'' (\Cref{lem:affine-conjugating-trick}).

    \begin{lemma}[affine conjugating trick]
        \label{lem:affine-conjugating-trick-intro}
        Let $\Gamma = V \rtimes G$ be the semidirect product of an abelian group $V$
        and a $d$-generated $K^{21}$-quasirandom finite group $G$.
        Let $A \subset V$ be a symmetric $G$-invariant set generating $V$.
        If $|A^3| \leq K|A|$ then $A^{7d} \supset [V, G]$.
    \end{lemma}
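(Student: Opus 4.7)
The plan is to bound the commutator subgroup $[V,g_i]$ for each of the $d$ generators $g_1,\dots,g_d$ of $G$ separately, and then assemble the $d$ pieces into a bound on the full $[V,G]$. Writing $V$ additively, the map $c_g(v)=[v,g]=(1-g)v$ is a group homomorphism $V\to V$ with image the subgroup $[V,g]$. The $G$-invariance and symmetry of $A$ give $c_{g_i}(A)=A-g_iA\subset A-A$, which has size at most $|A^2|$, so $[V,g_i]=\langle c_{g_i}(A)\rangle$ is a subgroup of $V$ generated by a subset of $A^2$. The first goal is to upgrade this to the containment $[V,g_i]\subset A^7$ for each single generator.

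For this upgrade I would use the affine conjugating trick alluded to in the introduction. Inside $\Gamma=V\rtimes G$ a direct computation gives $aga^{-1}=((1-g)a)\cdot g$ for $a\in V$ and $g\in G$, so conjugation by $a$ shifts $g$ affinely within the $V$-coset $Vg$. Because $A$ is $G$-invariant one has $GA=AG$, hence $B:=AG\subset\Gamma$ satisfies $B^3=A^3G$ and $|B^3|\le K|B|$; thus $B$ is a $K$-tripling subset of $\Gamma$ that contains the $K^{21}$-quasirandom subgroup $G$. Applying non-abelian Pl\"unnecke--Ruzsa to $B$ together with the quasirandomness of $G$ through a mixing or pivoting argument should constrain how far the $V$-parts of short products of $B$ can spread, forcing the subgroup $\langle c_{g_i}(A)\rangle$ to be exhausted within $A^7$.

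Once this single-generator bound is in hand, the $d$ pieces combine. Each $[V,g_i]\subset A^7$ is a subgroup, and $W:=\sum_{i=1}^d[V,g_i]$ is a subgroup of $V$ contained in $A^{7d}$. Since $A^7$ is $G$-invariant (as $A$ is), for every $G$-conjugate $hg_ih^{-1}$ the subgroup $[V,hg_ih^{-1}]=h[V,g_i]h^{-1}$ also lies in $A^7$; a stability argument using the $G$-invariance of $A^{7d}$ and the quasirandomness of $G$ then identifies $W$ with its $G$-closure, which is precisely $[V,G]$, giving the desired inclusion. The principal difficulty is the single-generator step: converting the combinatorial tripling data on $A$ and the $K^{21}$-quasirandomness of $G$ into the subgroup containment $[V,g_i]\subset A^7$ is where the affine conjugating trick does its real work, translating tripling on $A$ inside the abelian $V$ into tripling of the conjugacy class of $g_i$ inside the non-abelian $\Gamma$; the exponent $21$ in $K^{21}$ is consistent with iterating a quasirandomness-based mixing estimate a small number of times against the polynomial losses incurred by Pl\"unnecke--Ruzsa.
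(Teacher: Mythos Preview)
Your setup is right and matches the paper: form $B=AG$, observe $B^k=A^kG$ so $B$ is symmetric and $K$-tripling, and aim for $[V,g_i]\subset A^7$ for each generator $g_i$, then multiply the $d$ pieces. The gap is the single-generator step, where you only write ``a mixing or pivoting argument should constrain\dots'' without actually producing the mechanism. The paper's mechanism is concrete and short, and it is worth seeing exactly where the exponent $21$ enters.

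The key claim is that \emph{every} $V$-conjugate of $G$ is contained in $B^6$. This is proved by walking: certainly $G\subset B\subset B^6$; if $G_0=G^v\subset B^6$ for some $v\in V$ and $b\in B$, then $G_1=G_0^b\subset B^8$, so $G_1=B^8\cap G_1$ and by the growth-in-subgroups inequality $|G_1|\le K^7|B^2\cap G_1|$. Now quasirandomness (Gowers--Nikolov--Pyber) applied to $B^2\cap G_1$ inside $G_1$ gives $(B^2\cap G_1)^3=G_1$, hence $G_1\subset B^6$; this is precisely where $\deg_\C(G)>K^{21}$ is used. Since $A$ generates $V$, this walk reaches every conjugate $G^v$. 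Once all conjugates of $G$ lie in $B^6$, you get for \emph{all} $v\in V$ and \emph{all} $g\in G$ that $[v,g]=(g^{-1})^v\,g\in B^6\cdot B\cap V=A^7$. In particular your desired containment $[V,g_i]\subset A^7$ drops out immediately, with no pivoting and no Pl\"unnecke--Ruzsa beyond the tripling lemma.

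For the assembly you do not need any further quasirandomness or stability argument. Rhemtulla's lemma says directly that for any generators $x_1,\dots,x_d$ of $G$ one has
\[
[V,G]=\{[v_1,x_1]\cdots[v_d,x_d]:v_1,\dots,v_d\in V\},
\]
so $[V,G]=\sum_{i=1}^d[V,x_i]\subset A^{7d}$.
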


    Finally, we show that $A^{O_n(1)}$ covers $N$ and hence $P$ using a rather tricky argument based on \Cref{lem:affine-conjugating-trick-intro} (see \Cref{prop:GN-prop}).
    This completes the proof of \Cref{thm:main-soluble}.

    Two key tools are used in \Cref{sec:soluble-to-nilpotent}, which proves \Cref{thm:main} starting from \Cref{thm:main-soluble}.
    The first is a powerful pivoting argument of Gill and Helfgott that can be seen as an extremely general version of a sum-product theorem: see \Cref{prop:pivoting}.
    The second is a new growth lemma for images of a bilinear map: see \Cref{prop:bilinear}.
    Additionally we make heavy use of the results on trigonalizable sections estasblished in \Cref{sec:trig}.

    The deduction of \Cref{thm:main} is divided into cases of increasing generality.
    To begin with we consider the case in which $G$ has a trigonalizable section $\Sigma = \Gamma / N$ and $A$ is a subset of $\Gamma$ whose image in $\Sigma$ has ``no small roots''.
    Here a \emph{root} is a homomorphism $\chi$ from $\Sigma$ to some abelian $p'$-group defined by the conjugation action of $\Sigma$ on a $\Sigma$-composition factor of $O_p(\Sigma)$:
    these are analogous to roots in the theory of Lie algebras.
    We say $A$ has \emph{no small roots} if $\chi(A)$ is either trivial or larger than $K^C$ for an appropriate constant $C$, for every root $\chi$ of $\Sigma$.
    In this favorable case we can show that $\gamma_n(\Sigma)$ is covered by $A^{O_n(1)}$.
    This argument is a somewhat involved induction on the nilpotency class of $\gamma_n(\Sigma)$, and uses a more general form of the idea of ``descent'' from \cite{gill-helfgott}. This argument is given in \Cref{sec:no-small-roots}.

    If, more generally, $A$ is not necessarily a subset of $\Gamma$ but still acts trivially on $\Sigma / O_p(\Sigma)$, then we use pigeonholing argument to shrink $\Sigma$ until the image of $A^2 \cap \Gamma$ in $\Sigma$ has no small roots, and then we apply the previous case to $A^4 \cap \Gamma$.
    By comparing $\Sigma_1 = \gen{A^2 \cap \Gamma}$ with $\Sigma_2 = \gen{A^4 \cap \Gamma}$ and using the no-small-roots property, we prove that $\gamma_n(\Sigma_1) = \gamma_n(\Sigma_2)$.
    This crucial conclusion allows us to identify an appropriate normal subgroup $\Delta \nsgp \langle A \rangle$ such that $P \leq \Delta \leq \Gamma$.
    This subgroup $\Delta$ fills the role of $\Gamma$ in \Cref{thm:main}.
    See \Cref{sec:good-section}.

    Finally, in general, the trigonalization theorem from \Cref{sec:trig} allows us to replace the soluble section provided by \Cref{thm:main-soluble} with a trigonalizable section $\Sigma$,
    and moreover guarantees that $\langle A \rangle$ has a bounded-index subgroup $G_0$ that acts trivially on $\Sigma / O_p(\Sigma)$.
    We complete the proof by applying the previous case to $A^{3m}
    \cap G_0$, where $m = [G:G_0]$, which is a generating set for $G_0$ by \Cref{lem:schreier}.

    The proof of \Cref{thm:polylog-dimeter-intro} is given in \Cref{sec:application}.
    In this section we recycle some of the same ideas.
    In particular the affine conjugating trick plays a key role.
    We also rely on an important result of Steinberg on
    representations of finite simple groups of Lie type.

    \subsection{Acknowledgments}

    BM would like to thank Harald Helfgott for introducing him to this
    problem, as well as Vlad Finkelstein, Jonathan Pakianathan, Lam
    Pham, Misha Rudnev, Matthew Tointon, and James Wheeler for helpful
    conversations.

    \section{Notation}
    \label{sec:notation}

    The commutator $[x, y]$ is defined as $x^{-1} y^{-1} x y$.
    Iterated commutators are defined by $[x, \dots, y, z] = [[x, \dots, y], z]$.
    If $H, K$ are subgroups then $[H, K] = \langle [h, k] : h \in H, k \in K\rangle$,
    and similarly for iterated commutators.
    The subgroup $[H, K]$ is a normal subgroup of $\langle H, K\rangle$ (\cite{aschbacher}*{(8.5.6)}).
    If $H$ and $K$ are groups and $H$ acts on $K$ then we may always consider $H$ and $K$ as subgroups of their semidirect product $K \rtimes H$.
    The commutator $[H, K] \leq K$ and centralizers $C_H(K)$ and $C_K(H)$ are defined accordingly.
    As usual $G' = [G, G]$.
    The derived series of $G$ is denoted $(G^{(n)})_{n \geq 0}$,
    and $G^{(\omega)} = \bigcap_{n \ge 0} G^{(n)}$.
    If the derived series of $G$ terminates in finitely many steps (e.g., if $G$ is finite), then $G^{(\omega)}$ is called the \emph{perfect core} of $G$.
    The lower central series is denoted $(\gamma_n(G))_{n \geq 1}$,
    and $\gamma_\omega(G) = \bigcap_{n\ge 1} \gamma_n(G)$.

    A finite group is a \emph{$p$-group} if its order is a power of $p$;
    it is a \emph{$p'$-group} if its order is prime to $p$.
    The largest normal $p$-subgroup of a finite group $G$ is denoted $O_p(G)$ and called the \emph{$p$-core}.
    The largest normal soluble subgroup is denoted $\Sol(G)$ and called the \emph{soluble radical}.

    If $G$ is a group, $\frat(G)$ denotes the Frattini subgroup.
    If $G$ is finite, $\frat(G)$ is nilpotent, by the Frattini argument.
    If $G$ is a $p$-group, the Burnside basis theorem asserts that $\frat(G)$ is the smallest normal subgroup such that $G / \frat(G)$ is elementary abelian (see \cite{aschbacher}*{(23.2)}).

    All of the subgroups just defined are characteristic subgroups.
    In general if $H$ is a characteristic subgroup of $G$ we write $H \csgp G$.

    A \emph{section} of a group $G$ is a quotient $\Sigma = H / N$ where $N \nsgp H \leq G$.
    The \emph{normalizer} of $\Sigma$ is $N_G(\Sigma) = N_G(H) \cap N_G(N)$.
    Note that the normalizer of $\Sigma$ acts naturally on $\Sigma$.
    The \emph{centralizer} of $\Sigma$ is $C_G(\Sigma) = \{g \in N_G(\Sigma) : g~\text{acts trivially on}~\Sigma\}$.
%    The \emph{Weyl group} of $\Sigma$ is $W(\Sigma) = N_L(\Sigma) / C_L(\Sigma)$, where $L = \GL_n(\F)$;
%    note that the Weyl group acts faithfully on $\Sigma$.
    If $A \subset G$, the \emph{trace} of $A$ in $\Sigma$, denoted $\tr(A, \Sigma)$, is the projection of $A \cap H$ to $\Sigma$.
    We say $A$ \emph{covers} $\Sigma$ if $\tr(A, \Sigma) = \Sigma$, or equivalently if $H \subset AN$.

    Throughout $\F$ denotes a field, usually finite.
    The (Borel) subgroup of elements represented by an upper-triangular matrix in the standard basis is denoted $B_n(\F)$.
    The (toral) subgroup of diagonal matrices is denoted $T_n(\F)$, while the subgroup of upper unitriangular matrices is denoted $U_n(\F)$.
    Note that $B_n(\F) = U_n(\F) T_n(\F) \cong U_n(\F) \rtimes T_n(\F)$.
    There is a projection map $\pi : B_n(\F) \to T_n(\F)$ such that $\ker\pi = U_n(\F)$.
    Various other groups of the form $G = U \rtimes T$ will arise and usually we denote by $\pi$ the projection map $G \to T$ such that $\ker \pi = U$.
    Maps such as $\pi$ are denoted interchangeably on the left or exponentially, so $\pi(A) = A^\pi$.

    \section{Examples}
    \label{sec:examples}

    Well-known basic examples such as subgroups and nilprogressions demonstrate the necessity of the basic elements in the main theorem \Cref{thm:main}.
    See for example \cite{breuillard-intro}*{Section~1.6}.
    In this section we give some examples which demonstrate the necessity of some subtler aspects of \Cref{thm:main}.

    \subsection{Failure of polynomiality in high rank}

    Unlike \Cref{thm:main}, the bound in \cite{BGT} for the number of cosets of $\Gamma$ required to cover $A$ cannot be polynomial in $K$,
    even if we only require $\Gamma$ to be finite-by-soluble.
    The following example appeared in the unpublished manuscript \cite{eberhard-arxiv}, and was based on a similar construction in \cite{breuillard-tointon}*{Section~4.1}.

    Let $G = \Z^n \rtimes S_n$. and let $A = A_r = [-r, r]^n S_n$ for any $r > n!$.
    It is easy to see that $A$ is a $2^n$-approximate group.
    Suppose $\Gamma \le G$ is a subgroup such that $m = |A \Gamma / \Gamma| < n!$.
    Since $A_r = A_1^r$, it follows that $|A_1^{s+1} \Gamma / \Gamma| = |A_1^s \Gamma / \Gamma|$ for some $s < r$, which implies that $G = A \Gamma$ since $A$ generates $G$.
    Hence $\Gamma$ has index $m$ in $G$.
    In particular
    \begin{enumerate}[(1)]
        \item $\Gamma \cap S_n$ has index at most $m$ in $S_n$,
        \item $\Gamma \cap \Z^n$ contains $m\Z^n$.
    \end{enumerate}

    Note that $\Gamma$ has no nontrivial finite normal subgroup.
    Indeed if $H \nsgp \Gamma$ is finite then $H \cap \Z^n = 1$
    and $[m\Z^n, H] \leq \Z^n \cap H = 1$, so $H$ acts trivially on $\Z^n$, so $H \leq \Z^n$, so $H = 1$.

    If $\Gamma$ is soluble then so is $\Gamma \cap S_n$, which implies $m \geq n! / 24^{(n-1)/3}$ by a result of Dixon~\cite{dixon-solvable}.
    If $\Gamma$ is in fact nilpotent then $\Gamma \cap S_n$ must be trivial, so $m \geq n!$.

    Note that $A$ is a $2^n$-approximate group, and $G$ is isomorphic to a subgroup of $\GL_{n+1}(\Z)$.
    This shows that the number of cosets required in
    \Cref{BGT} must be at least $K^{c \log \log K}$,
    and that in \Cref{thm:main} or \Cref{thm:main-soluble} must be at least $K^{c \log n}$.

    \subsection{Failure of normality in high rank}

    Normality of $\Gamma$ in $G$, as stated in \Cref{thm:main} but not in \Cref{BGT}, is a special feature of bounded rank.
    The following example is due to Pyber. It was previously mentioned in \cite{breuillard-intro}*{Example~1.17} and \cite{helfgott--perspectives}*{p.~53}).

    Let $n$ be odd and let $G = S_n$ be the symmetric group of degree $n$.
    Let $m = \floor{n/4}$ and let $\Gamma_0$ be the elementary abelian subgroup
    \[
        \Gamma_0 = \langle (1,2), (3, 4), \dots, (2m-1, 2m) \rangle \cong C_2^m.
    \]
    Let $\sigma$ be the $n$-cycle $x \mapsto x + 2$.
    Let $A = \Gamma_0 \cup \{\sigma^{\pm1}\}$.
    Then $A$ is a $10$-approximate group which generates $G$,
    and therefore subject to \Cref{BGT}.
    However, there are only three normal subgroups of $G$, and none of them is suitable.

    Many variants of this construction are possible. For example we can take $\Gamma_0$ to be a direct product of copies of $A_5$.

    \subsection{Normality vs finite generation}

    Even in bounded rank we cannot guarantee normality simultaneously with finite generation of $\Gamma$ (so in particular the rank of $\Gamma / H$ in \Cref{thm:main} cannot be controlled).
    This example also illustrates some of the challenges of non-prime fields.

    Let $\F = \F_p(t)$, where $p = 2$ (say) and $t$ is transcendental over $\F_p$.
    Let $V = \F_p[t, t^{-1}]$ and let
    \[
        G = \left\{ \begin{pmatrix} t^n & v \\ 0 & 1 \end{pmatrix} : n \in \Z, v \in V \right\} \cong \F_p^\Z \rtimes \Z.
    \]
    Note that $G \leq \GL_2(\F)$.
    Let $W \leq V$ be the linear subspace spanned by $t^e$ for $-d \leq e \leq d$ and let
    \[
        A =
        \begin{pmatrix} 1 & W \\ 0 & 1 \end{pmatrix}
        \cup
        \left\{ \begin{pmatrix} t & 0 \\ 0 & 1 \end{pmatrix}^{\pm 1} \right\}.
    \]
    Then $A$ is a $(2p+5)$-approximate group which generates $G$,
    but $G$ has no finitely generated normal finite-by-nilpotent subgroup apart from the trivial group.

    \subsection{Lack of other structure in high rank}

    We can give diverse examples of large generating subsets of small
    growth in high-rank simple groups.
    The main issue is to ensure that these subsets actually generate
    our group, which is a crucial condition in the Product Theorem for
    simple groups of bounded rank.
    We use
    the following trick inspired by an idea of Bannai~\cite{bannai}.
    Let $A = S \cup C$, where $S$ is an arbitrary $K$-tripling
    subset of some group $G$ and $C$ is a conjugacy class of $G$.
    Then $A^3 \subset S^3 \cup S^2 C \cup S C^2 \cup C^3$,
    so $A$ is $K'$-tripling for $K' = K + K|C| + 2|C|^2$.
    If $|C|$ is not much larger than $K$, this shows that the growth
    of $A$ is similar to that of $S$.
    When $G$ is a finite simple group then certainly $C$ generates $G$,
     and this is also often the case when $G$ is only almost simple.

    For example, let $m \leq n$ and let $\Gamma$ be a copy of $S_m$ in $S_n$ (or similarly a copy of $\SL_m(\F_2)$ in $\SL_n(\F_2)$) and let $C$ be a small conjugacy class, say the set of transpositions (or transvections in $\SL_n(\F_2)$). Taking $m = n/2$ or even $m = n-100$, we get a huge generating set $A = \Gamma \cup C$ of relatively small growth.

    For another example let $\Gamma$ be a cyclic subgroup of $G = S_n$ of order $N \approx \exp(c \sqrt{n \log n})$, let $S$ be an interval of length $\sqrt{N}$ in $\Gamma$, and let $C_0$ be a set of $n-1$ transpositions that generate $S_n$.
    Then $A = S \cup C_0$ is a $O(n^6)$-tripling generating set of size $\exp(c \sqrt{n \log n})$, but $A$ is far from containing a subgroup and not dense in any conjugacy class.

    Let us note that, by a result of Guralnick and Saxl~\cite{guralnick-saxl}, if $G$ is almost simple then we can choose a small subset $C_0 \subset C$ which generates $\langle C \rangle$,
    so $A=S\cup C_0$ generates $\langle S, C \rangle$.
    If $S$ is a $K$-tripling subset of $G$ then $A$ is a $K'$-tripling
    subset for $K'=K+K|C|+|C|^3$, but for a suitable choice of $S$
    the generating set $A$ is far from being dense in any conjugacy class.

    These examples show in particular that the Product Theorem fails
    completely for finite simple groups of unbounded rank.

    \section{Toolbox}
    \label{sec:toolbox}

    \subsection{Basic arithmetic combinatorics}

        We review some basic theory of arithmetic combinatorics and in particular sets of small tripling.
        Most of what we need is in \cite{helfgott--perspectives}*{Sections~3 and 4}.
        Throughout we will work exclusively with sets of small tripling, so we will not review any results or terms related strictly to approximate groups.

        For $A, B$ subsets of a group $G$, we write $AB$ for the product set
        \[
            AB = \{ab : a \in A, b \in B\}.
        \]
        Define $A^k$ for $k \geq 0$ to be the set of all products $a_1 \cdots a_k$ with $a_1, \dots, a_k \in A$.
        We also define $A^{-1} = \{a^{-1} : a \in A\}$ and $A^{-k} = (A^{-1})^k$.
        We write $A^{\pm k}$ for $(A \cup \{1\} \cup A^{-1})^k$.
        We call $A$ \emph{symmetric} if $A = A^{-1}$.
%
%        \begin{lemma}[Ruzsa's triangle inequality]
%            For finite $A, B, C \subset G$,
%            \[
%                |AC^{-1}| |B| \leq |AB^{-1}| |BC^{-1}|.
%            \]
%        \end{lemma}
%        \begin{proof}
%            For each $x \in AC^{-1}$ fix a representation $x = a(x) c(x)^{-1}$ with $a(x) \in A$ and $c(x) \in C$.
%            Then the map
%            \begin{align*}
%                & AC^{-1} \times B \to AB^{-1} \times BC^{-1} \\
%                & (x, b) \mapsto (a(x) b^{-1}, b c(x)^{-1})
%            \end{align*}
%            is injective, because $x$ can be recovered via $x = a(x) b^{-1} \cdot b c(x)^{-1}$.
%        \end{proof}

        \begin{lemma}[tripling lemma, see \cite{helfgott--perspectives}*{(3.3)}]
            \label{lem:tripling}
            Let $A \subset G$ be finite, nonempty, and symmetric.
            For $k \geq 3$,
            \[
                |A^k| / |A| \leq (|A^3| / |A|)^{k-2}.
            \]
        \end{lemma}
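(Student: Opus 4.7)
The plan is to prove the bound by induction on $k \geq 3$, with the base case $k = 3$ being the definition of $|A^3|/|A|$. For the inductive step, I would invoke the nonabelian Ruzsa triangle inequality: for any finite subsets $X, Y, Z$ of a group,
\[
    |XZ^{-1}| \cdot |Y| \leq |XY^{-1}| \cdot |YZ^{-1}|.
\]
This inequality is standard and is itself proved by constructing an injection $XZ^{-1} \times Y \hookrightarrow XY^{-1} \times YZ^{-1}$: for each $g \in XZ^{-1}$, fix a representation $g = x z^{-1}$ with $x \in X$, $z \in Z$, and send $(g, y) \mapsto (x y^{-1}, y z^{-1})$; one recovers $g = (xy^{-1})(yz^{-1})$ and hence $x, z, y$ from the image.

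To apply this, I would specialize $X = A^{k-1}$, $Y = A$, $Z = A^2$, using the symmetry $A = A^{-1}$ to compute $XZ^{-1} = A^{k-1} A^2 = A^{k+1}$, $XY^{-1} = A^{k-1} A = A^k$, and $YZ^{-1} = A \cdot A^2 = A^3$. The triangle inequality then reads
\[
    |A^{k+1}| \cdot |A| \leq |A^k| \cdot |A^3|,
\]
i.e.\
\[
    \frac{|A^{k+1}|}{|A|} \leq \frac{|A^3|}{|A|} \cdot \frac{|A^k|}{|A|}.
\]
Combined with the inductive hypothesis $|A^k|/|A| \leq (|A^3|/|A|)^{k-2}$, this yields $|A^{k+1}|/|A| \leq (|A^3|/|A|)^{k-1}$, completing the induction.

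There is no serious obstacle here: the only real ingredient is having the right form of Ruzsa's triangle inequality valid in an arbitrary (possibly nonabelian) group, and the rest is a one-line application together with the symmetry of $A$. In particular no covering or Plünnecke-type machinery is needed for this tripling-to-higher-powers statement.
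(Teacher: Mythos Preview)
Your proof is correct and is precisely the standard argument: the paper does not supply its own proof but cites Helfgott's survey, where the lemma is derived in exactly this way from the nonabelian Ruzsa triangle inequality. There is nothing to add.
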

%        \begin{proof}
%            By Ruzsa's triangle inequality applied to $(A^{k-2}, A, A^2)$,
%            \[
%                |A^k| |A| \leq |A^{k-1}| |A^3|.
%            \]
%            The result follows by induction.
%        \end{proof}

        \begin{lemma}[orbit--stabilizer for sets, see \cite{helfgott--perspectives}*{Lemma~4.1}]
            \label{lem:orbit-stab}
            Let $A, B \subset G$ be finite sets and $H \leq G$ a (not necessarily normal) subgroup.
            Let $\pi : G \to G/H$ be the quotient map.
            \begin{enumerate}[(1)]
                 \item $|A^\pi| |B \cap H| \leq |AB|$.
                 \item $|A| \leq |A^\pi| |A^{-1}A \cap H|$.
             \end{enumerate}
        \end{lemma}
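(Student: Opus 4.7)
The plan is to prove both inequalities by direct coset-counting, exploiting the fact that $\pi$ decomposes $G$ into a disjoint union of left cosets of $H$. The only real input is that translation by a fixed element is a bijection, together with the observation that two distinct left cosets of $H$ are disjoint.

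For part (1), I would first choose a system of left coset representatives $a_1, \dots, a_k \in A$, one per left coset of $H$ that meets $A$, so that $k = |A^\pi|$. For each $i$ the translate $a_i(B \cap H)$ is contained in $a_i H \cap AB$, has cardinality $|B \cap H|$, and is disjoint from $a_j(B \cap H)$ when $j \ne i$ since the two sets lie in distinct cosets. Summing over $i$ yields $|AB| \ge |A^\pi|\,|B \cap H|$.

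For part (2), I would partition $A$ according to which coset of $H$ each element lies in: $A = \bigsqcup_{i=1}^{|A^\pi|}(A \cap a_i H)$. Within each nonempty piece fix a representative $a \in A \cap a_i H$; then the map $a' \mapsto a^{-1}a'$ is an injection from $A \cap a_i H$ into $H$, and since both $a$ and $a'$ lie in $A$, the image lands inside $A^{-1}A \cap H$. Thus each piece has at most $|A^{-1}A \cap H|$ elements, and summing over $i$ gives $|A| \le |A^\pi|\,|A^{-1}A \cap H|$.

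The only obstacle is keeping the coset bookkeeping straight and verifying disjointness; no further arithmetic-combinatorial machinery is required. Both parts are best viewed as the set-theoretic analogue of orbit--stabilizer: $|A^\pi|$ plays the role of the number of orbits (cosets met by $A$), while $|B \cap H|$ and $|A^{-1}A \cap H|$ play the roles of the stabilizer sizes one would see in a group-theoretic version.
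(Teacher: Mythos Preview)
Your argument is correct; this is exactly the standard coset-counting proof. Note that the paper does not actually supply its own proof of this lemma but merely cites \cite{helfgott--perspectives}*{Lemma~4.1}, so there is nothing to compare against beyond confirming that your derivation matches the well-known one.
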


        Note that if $A = B$ is a subgroup then the lemma states $|A| = |A^\pi| |A\cap H|$, which is the orbit--stabilizer theorem for the action of $A$ on $G/H$.
        The lemma implies that there is in general still some relation between the ``orbit'' $A^\pi$ and the ``stabilizer'' $A\cap H$.
%
%        \begin{proof}
%            Let $t : A^\pi \to A$ be a map such that $x^{t \pi} = x$ for each $x \in A^\pi$.
%            Then we have injections
%            \begin{align*}
%                &A^\pi \times (B \cap H) \to AB \\
%                &(x, y) \mapsto x^t y
%            \end{align*}
%            and
%            \begin{align*}
%                &A \to A^\pi \times (A^{-1} A \cap H) \\
%                &a \mapsto (a^\pi, (a^{\pi t})^{-1} a).\qedhere
%            \end{align*}
%        \end{proof}

        \begin{lemma}[growth in subgroups, quotients, and sections]
            \label{lem:growth-in-sections}
            Let $A \subset G$ be nonempty, finite, symmetric, and $K$-tripling.
            \begin{enumerate}[(1)]
                \item For $H \leq G$,
                \[
                    \frac{|A^k \cap H|}{|A^2 \cap H|}
                    \leq \frac{|A^{k+1}|}{|A|}
                    \leq K^{k-1}.
                \]
                \item For $H \leq G$ and $\pi : G \to G/H$ the quotient map,
                \[
                    \frac{|(A^k)^\pi|}{|A^\pi|}
                    \leq \frac{|A^{k+2}|}{|A|}
                    \leq K^k.
                \]
                \item For $\Sigma = H/N$ a section of $G$,
                \[
                    \frac{|\tr(A^k, \Sigma)|}{|\tr(A^2, \Sigma)|}
                    \leq \frac{|A^{k+5}|}{|A|}
                    \leq K^{k+3}.
                \]
            \end{enumerate}
        \end{lemma}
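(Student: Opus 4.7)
The rightmost inequality in each part is immediate from the tripling lemma (\Cref{lem:tripling}), applied with exponent $k+1$, $k+2$, or $k+5$ respectively; so the substance is the middle inequality. Part~(1) follows by combining both halves of the orbit--stabilizer lemma (\Cref{lem:orbit-stab}) applied with the subgroup $H$. Part~(1) of that lemma applied to the ordered pair $(A,A^k)$ gives $|A^\pi|\,|A^k\cap H|\le|A^{k+1}|$, and part~(2) applied to $A$ (using $A^{-1}A=A^2$ by symmetry) gives $|A|\le|A^\pi|\,|A^2\cap H|$. Eliminating $|A^\pi|$ between these two inequalities yields the claim.

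For part~(2) I would use a direct double counting. Pick a transversal $S\subset A^k$ for the left cosets of $H$ met by $A^k$, so $|S|=|(A^k)^\pi|$ and the elements of $S$ lie in distinct cosets of $H$; then $SA\subset A^{k+1}$. For each $y\in SA$, the preimages of $y$ in $S\times A$ correspond to those $s\in S$ with $s^{-1}y\in A$, i.e., $s\in yA^{-1}=yA$, and since distinct elements of $S$ lie in distinct cosets of $H$ this set has size at most $|(yA)^\pi|=|A^\pi|$ (left multiplication by $y$ is a bijection on $G/H$). Summing over $y\in SA$ yields $|(A^k)^\pi|\,|A|\le|A^{k+1}|\,|A^\pi|\le|A^{k+2}|\,|A^\pi|$, as required.

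Part~(3) is the same argument transplanted inside $H$. Let $\phi\colon H\to H/N=\Sigma$, and pick $T\subset A^k\cap H$ with one element from each $N$-coset (inside $H$) met by $A^k\cap H$, so $|T|=|\tr(A^k,\Sigma)|$. Since $T$ and $A^2\cap H$ both sit inside $H$, we have $T(A^2\cap H)\subset A^{k+2}\cap H$, and the same fiber analysis bounds the multiplicity over any $y$ by $|\phi(y(A^2\cap H))|=|\tr(A^2,\Sigma)|$. This gives $|\tr(A^k,\Sigma)|\,|A^2\cap H|\le|A^{k+2}\cap H|\,|\tr(A^2,\Sigma)|$; dividing by $|A^2\cap H|$ and invoking part~(1) at exponent $k+2$ bounds the right-hand side by $|A^{k+3}|/|A|\le|A^{k+5}|/|A|$, completing the proof. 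No step is a substantial obstacle; the only care needed is in part~(3), where to stay inside $H$ one must multiply the transversal by $A^2\cap H$ rather than by $A$, which costs two extra powers of $A$ in the exponent relative to part~(2) and forces the invocation of part~(1) as an intermediate step.
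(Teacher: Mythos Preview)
Your proof is correct and follows essentially the same strategy as the paper: orbit--stabilizer type bounds for (1) and (2), and for (3) an orbit--stabilizer bound inside $H$ followed by an appeal to (1). The one notable difference is that in (2) and (3) you inline a transversal/double-counting argument rather than citing \Cref{lem:orbit-stab} as a black box; the paper applies \Cref{lem:orbit-stab}(1) to $(A^k\cap H,\,A^4\cap H)$ and \Cref{lem:orbit-stab}(2) to $A^2\cap H$, obtaining $\dfrac{|\tr(A^k,\Sigma)|}{|\tr(A^2,\Sigma)|}\le\dfrac{|A^{k+4}\cap H|}{|A^2\cap H|}\le\dfrac{|A^{k+5}|}{|A|}$, whereas your argument gives $\dfrac{|A^{k+2}\cap H|}{|A^2\cap H|}\le\dfrac{|A^{k+3}|}{|A|}$, and similarly $|A^{k+1}|/|A|$ in (2). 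So your version is actually two powers of $K$ sharper than the stated bounds, the saving coming from avoiding the $A^{-1}A$ loss built into \Cref{lem:orbit-stab}(2).
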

        \begin{proof}
            These all follow from \Cref{lem:tripling,lem:orbit-stab}.
            See \cite{helfgott--perspectives}*{Section~4} for the first two.
            The third is similar.
            Suppose $N \nsgp H \leq G$ and let $\pi : H \to H / N$ be the quotient map.
            Applying \Cref{lem:orbit-stab}(1) to $(A^k \cap H, A^4 \cap H)$
            we obtain
            \[
              |(A^k\cap H)^\pi| |A^4\cap N| \le
              |(A^k\cap H)(A^4\cap H)| \le
              |A^{k+4}\cap H|,
            \]
            and applying \Cref{lem:orbit-stab}(2) to $A^2 \cap H$
            we obtain
            \[
              |A^2\cap H| \le
              |(A^2\cap H)^\pi| |(A^{-2}\cap H)(A^2\cap H)\cap N| \le
              |(A^2\cap H)^\pi| |A^4\cap N|.
            \]
            Multiplying these inequalities we obtain
            \[
                \frac{|(A^k \cap H)^\pi|}{|(A^2 \cap H)^\pi|}
                \leq
                \frac{|A^{k+4} \cap H|}
                {|A^2 \cap H|}.
            \]
            Now applying (1) gives (3).
        \end{proof}

        \begin{lemma}[covering lemma, essentially \cite{helfgott--perspectives}*{Lemma~4.2}]
            \label{lem:covering-observation}
            Suppose $A \subset G$ is finite and covered by $k$ left cosets of $H \leq G$.
            Then $A$ is covered by $k$ left translates of $A^{-1} A \cap H$.
        \end{lemma}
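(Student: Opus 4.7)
The plan is to refine the given cover by $k$ left cosets of $H$ into a cover by $k$ left translates of $A^{-1}A \cap H$, by choosing the translating element inside $A$ itself whenever possible.

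First I would fix left coset representatives $g_1, \dots, g_k \in G$ with $A \subset \bigcup_{i=1}^k g_i H$. Let $I = \{i : A \cap g_i H \neq \emptyset\}$, which has size at most $k$. For each $i \in I$, pick any element $a_i \in A \cap g_i H$; since $a_i \in g_i H$, we have $g_i H = a_i H$, so $A \subset \bigcup_{i \in I} a_i H$.

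Now I would rewrite each piece $A \cap a_i H$. If $a \in A \cap a_i H$, then $a = a_i h$ for some $h \in H$, and $h = a_i^{-1} a$ lies in $A^{-1} A$ (since $a_i, a \in A$) and also in $H$. Hence $h \in A^{-1} A \cap H$, which gives $a \in a_i (A^{-1} A \cap H)$. Taking the union over $i \in I$ yields $A \subset \bigcup_{i \in I} a_i (A^{-1} A \cap H)$, which is a cover by at most $k$ left translates.

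There is no real obstacle here; the only subtlety is the standard move of replacing the abstract coset representative $g_i$ by an element of $A$ in the same coset, which is exactly what allows the translating set to be taken inside $A^{-1} A$.
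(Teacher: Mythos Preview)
Your argument is correct and is essentially identical to the paper's: both replace each coset representative by an element of $A$ lying in that coset and then observe $A \cap a_i H \subset a_i(A^{-1}A \cap H)$. The only difference is cosmetic---you spell out the set $I$ of nonempty intersections explicitly, whereas the paper simply writes ``we may assume $x_1,\dots,x_k \in A$''.
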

        \begin{proof}
            Suppose $A \subset \bigcup_{i=1}^k x_i H$.
            We may assume $x_1, \dots, x_k \in A$.
            Then $A \cap x_iH = x_i(x_i^{-1}A \cap H) \subset x_i (A^{-1}A \cap H)$.
            Hence $A \subset \bigcup_{i=1}^k x_i (A^{-1}A \cap H)$.
        \end{proof}

        The follow basic and self-evident rule will come up several times.

        \begin{lemma}[modular law]
            Let $A, B \subset G$ and assume $B \subset \Gamma \leq G$. Then
            \[
                \Gamma \cap AB = (\Gamma \cap A)B.
            \]
            More generally, for $A, B, C \subset G$,
            \[
                C \cap AB \subset (CB^{-1} \cap A) B \subset CB^{-1} B \cap AB.
            \]
        \end{lemma}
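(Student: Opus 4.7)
The statement is elementary and I would prove the general inclusion first and then deduce the equality as a special case. For the general inclusion, take any $x \in C \cap AB$ and write $x = ab$ with $a \in A$, $b \in B$. Then $a = xb^{-1} \in CB^{-1}$, so $a \in CB^{-1} \cap A$ and hence $x = ab \in (CB^{-1} \cap A)B$. The second inclusion $(CB^{-1} \cap A) B \subset CB^{-1}B \cap AB$ is immediate since $CB^{-1} \cap A$ lies in both $A$ and $CB^{-1}$.

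For the equality, specialise to $C = \Gamma$ with $B \subset \Gamma$. The hypothesis $B \subset \Gamma$ (so also $B^{-1} \subset \Gamma$, as $\Gamma$ is a group) gives $\Gamma B^{-1} = \Gamma$ and $\Gamma B^{-1} B = \Gamma$. Hence the two inclusions above become
\[
    \Gamma \cap AB \subset (\Gamma \cap A) B \subset \Gamma \cap AB,
\]
forcing the middle and outer sets to coincide. One could instead verify the reverse inclusion directly: if $x = ab$ with $a \in \Gamma \cap A$ and $b \in B \subset \Gamma$, then $x \in AB$ and $x \in \Gamma \cdot \Gamma = \Gamma$.

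There is no genuine obstacle here; the lemma is a purely set-theoretic rewriting rule, and the only thing to be careful about is tracking where the hypothesis $B \subset \Gamma$ is used (namely, to collapse $\Gamma B^{-1}$ and $\Gamma B$ back to $\Gamma$). The general two-inclusion form is useful precisely because it does not require $B$ to sit inside any fixed subgroup, which is the version that will be invoked repeatedly in later covering arguments.
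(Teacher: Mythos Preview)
Your proof is correct. The paper does not actually supply a proof of this lemma; it is introduced with the sentence ``The following basic and self-evident rule will come up several times'' and stated without argument. Your write-up is exactly the routine verification one would expect, and your observation that the first part follows from the general chain by specializing $C = \Gamma$ and using $\Gamma B^{-1} = \Gamma = \Gamma B^{-1} B$ (for nonempty $B \subset \Gamma$) is the natural way to organize it.
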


        We need the following version of Schreier's lemma
        (see \cite{Helfgott-Seress}*{Lemma~3.8}).

        \begin{lemma}[Schreier]
            \label{lem:schreier}
            Let $G$ be a group and $A \subset G$ and $H \leq G$.
            Suppose $AH = G$.
            Then $\langle A\rangle \cap H = \langle A^{\pm 3} \cap H\rangle$.
        \end{lemma}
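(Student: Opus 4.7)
The inclusion $\langle A^{\pm 3} \cap H \rangle \subset \langle A \rangle \cap H$ is immediate, so the work is in the reverse direction. My plan is to apply the standard Schreier rewriting argument, arranging the transversal to sit inside $A^{-1} \cup \{1\}$ so that each Schreier generator lands in $A^{\pm 3}$. Let $S = A \cup A^{-1} \cup \{1\}$, so that $A^{\pm 3} = S^3$, and set $H_0 = \langle A \rangle \cap H$.

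The preparatory step is to choose a right transversal $T$ for $H_0$ in $\langle A \rangle$ with $1 \in T$ and $T \subset A^{-1} \cup \{1\}$. This uses the hypothesis as follows: inverting $AH = G$ gives $HA^{-1} = G$, so every right coset of $H$ in $G$ meets $A^{-1} \cup \{1\}$; and since $\langle A \rangle H = G$, the right cosets of $H_0$ in $\langle A \rangle$ are in natural bijection with the right cosets of $H$ in $G$ via $H_0 g \mapsto Hg$. In particular both $T$ and $T^{-1}$ are contained in $S$.

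Given $x \in H_0$, I would write $x = s_1 \cdots s_n$ with $s_i \in A \cup A^{-1}$, set $g_i = s_1 \cdots s_i$, and let $t_i \in T$ be the representative of the right coset of $g_i$. Since $g_0 = 1$ and $g_n = x \in H_0$, both $t_0$ and $t_n$ equal $1$. Defining the Schreier terms $u_i = t_{i-1} s_i t_i^{-1}$, the product $u_1 \cdots u_n$ telescopes to $t_0 x t_n^{-1} = x$, and plainly $u_i \in S \cdot S \cdot S = A^{\pm 3}$. The only nontrivial check, and the closest thing to an obstacle, is that each $u_i$ lies in $H$: writing $g_i = h_i t_i$ with $h_i \in H$ (by the transversal property) and using $g_i = g_{i-1} s_i$, a one-line rearrangement gives $u_i = h_{i-1}^{-1} h_i \in H$. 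This exhibits $x$ as a product of elements of $A^{\pm 3} \cap H$ and completes the proof; the entire argument is classical Schreier, and the role of the hypothesis $AH = G$ is precisely to permit the transversal to be placed in $A^{-1} \cup \{1\}$.
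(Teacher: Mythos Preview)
Your proof is correct; it is the standard Schreier generator argument, and your crucial observation that the transversal $T$ can be chosen inside $A^{-1}\cup\{1\}$ (using $G=HA^{-1}$ and the bijection $H_0\backslash\langle A\rangle \to H\backslash G$) is exactly what pins the generators down to $A^{\pm 3}$. The paper does not give its own proof of this lemma but simply cites \cite{Helfgott-Seress}*{Lemma~3.8}, so there is no comparison to make beyond noting that your argument is the classical one.
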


        This lemma is best known in the context of finitely generated groups,
        as it implies that a finite-index subgroup of a finitely generated group is finitely generated.
        The elements of $A^{\pm3} \cap H$ are sometimes called Schreier generators for $H$.

%        \begin{proof}
%            Since $A^{\pm 1} A \subset G = AH$, it follows from the modular law that
%            \[
%                A^{\pm 1} A \subset A^{\pm2} \cap AH \subset A(A^{\pm 3} \cap H),
%            \]
%            so
%            \[
%                A^{\pm 1} A \langle A^{\pm 3} \cap H\rangle \subset A \langle A^{\pm 3} \cap H\rangle.
%            \]
%            Thus $A \langle A^{\pm 3} \cap H\rangle$ is a nonempty subset of $\langle A\rangle$ closed under multiplication by elements of $A^{\pm 1}$,
%            so it must be $\langle A\rangle$.
%            Hence, by the modular law,
%            \[
%                \langle A \rangle \cap H = (A \langle A^{\pm3} \cap H\rangle) \cap H = (A \cap H) \langle A^{\pm 3} \cap H \rangle = \langle A^{\pm3} \cap H \rangle.
%                \qedhere
%            \]
%        \end{proof}

%        Finally, we need a standard lemma due to Freiman about $K$-tripling subsets for $K < 3/2$.

%        \begin{lemma}[\cite{freiman}, \cite{tointon-book}*{Lemma~2.2.2}]
%            \label{lem:freiman}
%            Let $G$ be a group, let $A \subset G$ be a finite subset, and assume $|A^2| < (3/2) |A|$. Then $A^{-1} A$ is a subgroup.
%        \end{lemma}

        \subsection{Quasirandomness}

        Next we recall some facts about quasirandomness.
        If $G$ is a finite group let $\deg_\C(G)$ be the minimum degree of a nontrivial complex representation of $G$,
        conventionally $\infty$ if $G$ is trivial.
        A group is colloquially called \emph{quasirandom} if $\deg_\C(G)$ is large.
        Simple groups of Lie type are examples.

        \begin{theorem}[Landazuri--Seitz~\cite{landazuri-seitz}]
            \label{landazuri--seitz}
            If $L$ is a simple group of Lie type of characteristic $p$ and rank $\ell$ then $\deg_\C(L) \geq |L|^{c/\ell}$ for a constant $c>0$.
        \end{theorem}

        The relevance of quasirandomness is indicated by the following well-known result, essentially due to Gowers (see also \cites{gowers,nikolov-pyber}).

        \begin{proposition}[\cite{babai-nikolov-pyber}*{Corollary~2.6}]
            \label{prop:quasirandomness}
            Let $G$ be a finite group and $m = \deg_\C(G)$.
            If $A, B, C \subset G$ are sets such that
            \[
                |A| |B| |C| \ge |G|^3 / m,
            \]
            then
            \[
                ABC = G.
            \]
            In particular if $|A| \ge |G| / m^{1/3}$ then $A^3 = G$.
        \end{proposition}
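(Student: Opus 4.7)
The plan is to prove the more general statement first (showing $ABC = G$ when $|A||B||C| > |G|^3/m$) and then deduce the $A^3 = G$ consequence by taking $B = C = A$. Fix $g \in G$; it suffices to show that the convolution $(1_A * 1_B * 1_C)(g)$, which counts triples $(a,b,c) \in A \times B \times C$ with $abc = g$, is strictly positive.

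Write $1_A = \alpha + f_A$ where $\alpha = |A|/|G|$ is the constant function and $f_A$ has mean zero, and similarly for $B, C$. Expanding the triple convolution and using the identity $\alpha * h = |G| \cdot (\E h)$ for the constant $\alpha$, every term involving at least one constant factor $\alpha, \beta, \gamma$ together with at least one mean-zero factor collapses to $0$. The only surviving terms are the pure constant term $\alpha * \beta * \gamma \equiv |A||B||C|/|G|$ and the pure error term $(f_A * f_B * f_C)(g)$. Hence the goal reduces to showing that
\[
|(f_A * f_B * f_C)(g)| < \frac{|A||B||C|}{|G|}.
\]

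For this, first apply Cauchy--Schwarz to write $|(f_A * f_B * f_C)(g)| \leq \|f_A * f_B\|_2 \cdot \|f_C\|_2$, and note that $\|f_C\|_2^2 = |C|(1 - |C|/|G|) \leq |C|$, with analogous bounds for $A, B$. The crucial step -- and the one place where the hypothesis on $m = \deg_\C(G)$ enters -- is the Gowers convolution bound: for $f_A$ of mean zero and arbitrary $f_B$, one has
\[
\|f_A * f_B\|_2 \leq \sqrt{|G|/m} \cdot \|f_A\|_2 \cdot \|f_B\|_2.
\]
This is proved via Peter--Weyl: Plancherel gives $\|\hat h(\rho)\|_{HS}^2 \leq (|G|/d_\rho)\|h\|_2^2$ for any irreducible $\rho$, and since $f_A$ has mean zero its Fourier coefficient at the trivial representation vanishes. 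For every other $\rho$ we have $d_\rho \geq m$, so bounding $\|\hat f_A(\rho) \hat f_B(\rho)\|_{HS} \leq \|\hat f_A(\rho)\|_{op} \|\hat f_B(\rho)\|_{HS}$ and applying Plancherel again yields the claim.

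Combining the two estimates gives $|(f_A * f_B * f_C)(g)| \leq \sqrt{|A||B||C| \cdot |G|/m}$, which is strictly less than $|A||B||C|/|G|$ precisely when $|A||B||C| > |G|^3/m$, completing the proof. The main obstacle is the Gowers convolution bound; the rest is bookkeeping. For the final sentence of the proposition, setting $B = C = A$ and noting $|A|^3 > |G|^3/m$ iff $|A| > |G|/m^{1/3}$ gives $A^3 = G$ immediately.
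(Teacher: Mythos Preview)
Your proof is correct and is precisely the standard argument found in the cited references (Gowers, Nikolov--Pyber, Babai--Nikolov--Pyber). The paper itself does not prove this proposition at all; it merely states the result with citations, so there is no paper-proof to compare against. One tiny quibble: your identity ``$\alpha * h = |G|\cdot(\E h)$'' is miswritten (for the constant function $\alpha \equiv |A|/|G|$ one gets $\alpha * h \equiv (|A|/|G|)\sum h$), but the conclusion you draw from it---that every mixed term in the expansion vanishes---is correct, and the rest of the argument is clean.
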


        \begin{lemma}
            \label{lem:gowers-trick}
            Let $A$ be a symmetric subset of a finite group $G$ such that $A^k$ contains a coset of $N \nsgp G$.
            If $|A^3| \leq K|A|$ and $\deg_\C(N) \geq K^{3k}$ then $A^3$ contains a coset of $N$.
            In particular if $N = G$ then $A^3 = G$.
        \end{lemma}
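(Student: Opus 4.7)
The plan is to locate a single coset $xN$ such that $|A \cap xN|$ exceeds the quasirandomness threshold $|N|/\deg_\C(N)^{1/3}$; once this is done, three translated copies of $A \cap xN$ lie inside the coset $x^3 N$, their product fills that coset by \Cref{prop:quasirandomness} applied inside $N$, and $A^3$ will contain the full coset $x^3 N$.

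To set this up, I would first control the size of the projection $A^\pi$, where $\pi \colon G \to G/N$. Because $N \nsgp G$, the product $A \cdot gN = \bigcup_{a \in A} agN$ is a disjoint union of exactly $|A^\pi|$ cosets of $N$, and by hypothesis $A \cdot gN \subset A^{k+1}$. Combined with the tripling lemma (\Cref{lem:tripling}) this yields
\[
|A^\pi| \cdot |N| \;\leq\; |A^{k+1}| \;\leq\; K^{k-1}|A|
\]
(the case $k = 1$ being trivial, since then $A \supset gN$ already gives $A^3 \supset g^3 N$ by normality). Averaging over cosets then produces some $x \in G$ with $|A \cap xN| \geq |A|/|A^\pi| \geq |N|/K^{k-1}$. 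Setting $B = A \cap xN = xN_0$ with $N_0 \subset N$, and expanding $B^3$ using normality of $N$, I obtain
\[
B^3 \;=\; x^3 \cdot (x^{-2}N_0 x^2)(x^{-1}N_0 x) N_0,
\]
a product of three subsets of $N$ each of cardinality $|N_0| \geq |N|/K^{k-1}$. The hypothesis $\deg_\C(N) \geq K^{3k}$ makes the product of these three sizes exceed $|N|^3/\deg_\C(N)$, so \Cref{prop:quasirandomness} applied inside $N$ forces the triple product to equal $N$. Hence $A^3 \supset B^3 = x^3 N$, a full coset of $N$; when $N = G$ this coset is $G$ itself.

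The only step with any real content is the bound $|A^\pi| \leq K^{k-1}|A|/|N|$, and normality of $N$ is doing the essential work there: it both makes $A \cdot gN$ a union of \emph{full} cosets (so the pigeonhole argument yields a usefully dense fiber of $A$) and allows $N_0$ to be conjugated back into $N$ in the final appeal to quasirandomness. Everything else is either bookkeeping or a direct invocation of \Cref{prop:quasirandomness}.
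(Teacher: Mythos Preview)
Your proof is correct and essentially identical to the paper's own argument: both bound $|A^\pi|$ via $|A^\pi|\,|N| \le |A^{k+1}| \le K^{k-1}|A|$, pigeonhole to find a dense fiber $A \cap xN$, and then apply \Cref{prop:quasirandomness} to the three conjugated copies inside $N$ to fill the coset $x^3 N$. The paper phrases the last step as $A^3 \supset xBxBxB = x^3 B^{x^2}B^xB$ with $B = x^{-1}A \cap N$, which is exactly your $N_0$.
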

        \begin{proof}
            We may assume $k \geq 3$ and $K > 1$.
            Since $A^k$ contains a coset of $N$, $|A^{k+1}| \geq |N| |A^\pi|$, where $\pi : G \to G/N$ is the projection.
            Hence there is some $x \in G$ such that
            \[
                |A \cap xN| \geq |A| / |A^\pi| \geq (|A| / |A^{k+1}|) |N| \geq K^{-k+1} |N|
            \]
            by \Cref{lem:tripling}.
            Let $B = x^{-1}A \cap N$, so $|B| \geq K^{-k+1} |N|$.
            Then
            \[
                A^3 \supset xBxBxB = x^3 B^{x^2} B^x B
            \]
            and, by \Cref{prop:quasirandomness}, $B^{x^2} B^x B = N$.
        \end{proof}

        \begin{lemma}
            \label{lem:degrees}
            Let $G$ be a finite perfect group. Then
            \[
                \deg_\C(G) \geq c \deg_\C(G/\Sol(G))^{1/2}.
            \]
        \end{lemma}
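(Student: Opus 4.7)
My plan is to combine Clifford theory with a tensor trick on the adjoint representation, with an inductive argument for one degenerate case.

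Let $\rho$ be a minimum-degree nontrivial irreducible representation of $G$, of dimension $d = \deg_\C(G)$, and set $S = \Sol(G)$. By Clifford's theorem, $\rho|_S = e(\sigma_1 \oplus \cdots \oplus \sigma_t)$, where $\sigma_1,\dots,\sigma_t$ are distinct irreducible representations of $S$ forming a single $G$-orbit with common multiplicity $e$, so $d = et\dim\sigma_1$. Consider the adjoint representation $W := V_\rho \otimes V_\rho^* \cong \mathrm{End}(V_\rho)$ with $G$ acting by conjugation. The subspace $W^S$ of $S$-invariants is a representation of $G/S$, and by Schur orthogonality has dimension $\sum_i \mathrm{mult}(\sigma_i)^2 = e^2 t$; by Schur's lemma the $G$-invariants in $W$ form the one-dimensional subspace $\mathbf{C}\cdot I$. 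When $e^2 t \geq 2$, the quotient $W^S/\mathbf{C}\cdot I$ is a nontrivial representation of $G/S$ of dimension at most $e^2 t - 1 \leq (et)^2 \leq d^2$, yielding $\deg_\C(G/S) \leq d^2$, whence $d \geq \sqrt{\deg_\C(G/S)}$ with constant $c = 1$.

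The remaining case is $e = t = 1$, in which $\rho|_S$ is itself irreducible; here the hypothesis that $G$ is perfect becomes essential. I would proceed by induction on $|G|$: pick a minimal normal subgroup $A$ of $G$ contained in $S$, so that $A$ is elementary abelian. If $A \leq \ker\rho$ then $\rho$ descends to the smaller perfect group $G/A$, which satisfies $\Sol(G/A) = S/A$ and $(G/A)/\Sol(G/A) \cong G/S$ (a soluble preimage in $G$ of $\Sol(G/A)$ would itself be soluble by extension and hence contained in $S$), and the inductive hypothesis combined with $\deg_\C(G/A) \leq d$ delivers the bound. Otherwise, the irreducibility of $\rho|_S$ combined with Clifford analysis of $\rho|_A$ forces the $G$-orbit and $S$-orbit of any character of $A$ appearing in $\rho|_A$ to agree; in the extremal subcase this makes $A \cong \mathbf{Z}/p$ a central cyclic subgroup on which $A$ acts on $V_\rho$ as a single nontrivial scalar character.

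The main obstacle is this final central-scalar subcase: the naive tensor trick applied to $\rho \otimes \rho^*$ produces a representation of $G/A$ only of dimension $d^2$, so a straightforward iteration of the square-root bound would compound to the weaker estimate $\deg_\C(G/S) \leq d^4$. The remedy is to track $\rho$ projectively, as a projective representation $\bar\rho\colon G/A \to \mathrm{PGL}_d(\mathbf{C})$ of dimension $d$; applying the inductive hypothesis to a suitable perfect central extension $\tilde{H}$ of $G/A$ on which $\bar\rho$ lifts to a linear representation of dimension $d$ (and whose soluble radical is a central extension of $S/A$) preserves the square-root relation and completes the proof.
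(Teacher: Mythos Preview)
Your Clifford/adjoint argument for the case $e^2t\ge 2$ is correct, and the reduction of the remaining case $e=t=1$ to the situation where $A\cong\Z/p$ is central and acts by a nontrivial scalar is also valid (in fact the subcase $t'>1$ cannot occur: since both $G$ and $S$ act transitively on the characters of $A$ in $\rho$, the image of $G$ in the permutation group on these characters is covered by the image of $S$, hence is soluble; but it is also a quotient of the perfect group $G$, so it is trivial, forcing $t'=1$).

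The gap is in your ``remedy'' for the central-scalar subcase. The projective representation $\bar\rho$ of $G/A$ has cohomology class equal to the pushforward along $\chi\colon A\hookrightarrow\C^\times$ of the extension class of $1\to A\to G\to G/A\to 1$, so its order divides $p$. If that order is $1$ then $\bar\rho$ already linearizes on $G/A$ and you are in Case~A; if it is $p$ then any central extension $\tilde H$ of $G/A$ on which $\bar\rho$ linearizes has $|\tilde H|\ge p\,|G/A|=|G|$ and $|\Sol(\tilde H)|=|\Sol(G)|$, so neither induction on $|G|$ nor on $|\Sol(G)|$ makes progress. Replacing the projective step by the linear representation $\rho\otimes\rho^*$ of $G/A$ and invoking the inductive hypothesis yields only $\deg_\C(G/S)\le\deg_\C(G/A)^2/c^2\le d^4/c^2$, and iterating halves the exponent at every pass, so no fixed exponent survives the induction.

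The paper avoids this loop entirely: it takes a nontrivial $d$-dimensional representation $\pi$ of $G$, notes that the image $G^\pi\le\GL_d(\C)$ is perfect and hence nonsoluble, and then quotes a result of Nikolov--Pyber asserting that $G^\pi/\Sol(G^\pi)$ embeds into a symmetric group of degree $O(d^2)$. Since $\Sol(G)^\pi\le\Sol(G^\pi)$, the group $G/\Sol(G)$ acquires a nontrivial permutation representation of degree $O(d^2)$, giving $\deg_\C(G/\Sol(G))\le Cd^2$ in one stroke. This black-box bound on the permutation degree of $H/\Sol(H)$ for linear $H$ is precisely what is missing from your argument; with it, no induction is needed.
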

        \begin{proof}
            Consider a nontrivial complex representation $G^\pi$ of $G$ of degree $d$.
            Since $G$ is perfect, $G^\pi$ is not soluble.
            By \cite{nikolov-pyber}*{Corollary~2.5}, $G^\pi / \Sol(G^\pi)$ embeds into $\Sym(d_1)$ for some $d_1 \leq C d^2$.
            Since $\Sol(G)^\pi \leq \Sol(G^\pi)$, $G / \Sol(G)$ has a nontrivial permutation representation of degree $d_1$.
            Hence
            \[
                \deg_\C(G / \Sol(G)) \leq C d^2.\qedhere
            \]
        \end{proof}

        \subsection{Coprime action}

        \begin{theorem}[Schur--Zassenhaus, see \cite{aschbacher}*{(18.1)}]
            \label{thm:SZ}
            Let $G$ be a finite group.
            Assume $U \nsgp G$ and $\gcd(|U|, |G/U|) = 1$.
            Assume $U$ or $G/U$ is soluble.
            Then there is a complement $T$ to $U$ in $G$
            and all complements are conjugate.
        \end{theorem}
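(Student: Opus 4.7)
The plan is to prove existence and conjugacy in tandem by induction on $|G|$, reducing both parts to the special case in which $U$ is an abelian minimal normal subgroup of $G$, and then resolving that case by a cohomological averaging argument.

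For the inductive reduction, suppose there is a proper nontrivial $G$-invariant subgroup $V$ of $U$. Applied to $G/V$ (with normal subgroup $U/V$ of coprime index) the inductive hypothesis produces a complement $T/V$ to $U/V$; the preimage $T \leq G$ then satisfies $V \nsgp T$, $T \cap U = V$, and $|T/V| = |G/U|$, which is coprime to $|V|$. A second application of induction inside $T$ yields a complement to $V$ in $T$, which is the desired complement to $U$ in $G$. If on the other hand no such $V$ exists and $U$ is soluble, then $U$ has no proper characteristic subgroup, hence is elementary abelian. If only $G/U$ is soluble, one runs the symmetric reduction using the preimage of a minimal normal subgroup of $G/U$; this is the step I expect to be the main obstacle, because the non-abelian $U$ case needs a more careful argument to produce a suitable complement before collapsing $U$ to something abelian. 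A clean way to handle it is to pick a prime $p \mid |G/U|$ and let $P$ be a Sylow $p$-subgroup of $G$; then $PU/U$ is a Sylow $p$-subgroup of $G/U$, and one can replace $G$ by $N_G(P)U$ and use Frattini-style arguments to eventually descend to an abelian situation.

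In the abelian base case, fix any set-theoretic section $s : G/U \to G$ and form the $2$-cocycle $c(x,y) = s(x)s(y)s(xy)^{-1} \in U$. Since $\gcd(|U|,|G/U|) = 1$, the integer $n = |G/U|$ is invertible modulo the exponent of $U$, so one can average out the cocycle: modifying $s$ by a suitably chosen coboundary built from $\sum_{y} c(x,y)$ (written additively, using the $G/U$-action on $U$) produces a section that is a genuine homomorphism $G/U \to G$. Its image is the required complement $T$. For conjugacy of two abelian-case complements $T_1,T_2$, the function $\phi(x) = t_1(x)t_2(x)^{-1} \in U$ (with $t_i(x) \in T_i$ the unique lift of $x$) is a $1$-cocycle and is again killed by the same averaging, yielding $u \in U$ with $T_1 = uT_2 u^{-1}$.

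Finally, to lift conjugacy back up the induction, given two complements $T_1, T_2$ and a proper $G$-invariant $1 < V < U$, the images $T_1 V/V$ and $T_2 V/V$ are complements of $U/V$ in $G/V$, so by induction they are conjugate; replacing $T_1$ by a conjugate we may assume $T_1 V = T_2 V$. Inside this smaller group, $V$ is normal of coprime index and $T_1, T_2$ are two complements, so a final inductive application conjugates them by an element of $V$. Combined with the abelian base case, this completes both parts.
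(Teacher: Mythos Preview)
The paper does not prove this theorem: it is stated as a classical result with a citation to Aschbacher's textbook, and no argument is supplied. So there is no paper proof to compare against.

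Your proposal follows the standard textbook strategy and is essentially correct. The reduction to an abelian minimal normal subgroup when $U$ is soluble, the cohomological averaging in the abelian base case, and the inductive lifting of conjugacy are all fine. The one place where your sketch is imprecise is the existence argument when only $G/U$ is assumed soluble. Taking $P$ to be a Sylow $p$-subgroup of $G$ itself does not set up a Frattini argument, since $P$ is not Sylow in any proper normal subgroup in sight. The correct move (which you almost say, but then drift from) is to let $M/U$ be a minimal normal, hence elementary abelian $p$-, subgroup of $G/U$, take $P \in \operatorname{Syl}_p(M)$ so that $M = PU$ and $P \cap U = 1$, and apply the Frattini argument to $P$ inside the normal subgroup $M$ to obtain $G = N_G(P)M = N_G(P)U$. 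Then either $N_G(P) < G$ and induction applies to $N_G(P)$ with its normal subgroup $N_U(P)$ of coprime index, or $P \nsgp G$ and induction applies to $G/P$ with normal subgroup $UP/P \cong U$. With this correction your outline goes through; conjugacy in the $G/U$-soluble case is handled by the same Frattini reduction.
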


        For the rest of this subsection assume $U$ is a finite $p$-group and $T$ is a finite $p'$-group acting on $U$.
        Let $G = U \rtimes T$.
        The following lemma generalizes a familiar property of vector spaces.

        \begin{lemma}[\cite{aschbacher}*{(24.4--6)}]
            \label{lem:U-decomp}
            $U = [U, T] C_U(T)$ and $[U, T] = [U, T, T]$.
            If $U$ is abelian then $U = [U, T] \times C_U(T)$,
            and in particular $[U, T] = U$ if and only if $C_U(T) = 1$.
        \end{lemma}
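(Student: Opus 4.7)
The plan is to prove the three statements in stages, reducing everything to the abelian case, which contains the essential arithmetic.

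I would start with the abelian case, since the decomposition $U = [U,T] \times C_U(T)$ follows directly from Maschke-style averaging. When $U$ is abelian, view $U$ additively as a $\Z[T]$-module. Because $\gcd(|U|,|T|) = 1$, multiplication by $|T|$ is an automorphism of $U$, so the averaging map $\pi(u) = |T|^{-1} \sum_{t \in T} t\cdot u$ is a well-defined $T$-equivariant projection onto $C_U(T)$. Its kernel contains $[U,T]$ because each summand $|T|^{-1}(u - t\cdot u)$ lies in $[U,T]$, and conversely the image of $[U,T]$ under $\pi$ is $0$. Hence $U = [U,T] \oplus C_U(T)$, which also proves the last assertion that $[U,T] = U$ iff $C_U(T) = 1$.

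Next I would prove $U = [U,T]\, C_U(T)$ in the general (possibly nonabelian) case by induction on $|U|$. Let $Z = Z(U)$; since $Z$ is characteristic in $U$ it is $T$-invariant. By induction applied to $U/Z$, together with the fact (see below) that $C_U(T)$ surjects onto $C_{U/Z}(T)$, we obtain $U = [U,T]\, C_U(T)\, Z$. Applying the abelian case to the abelian $T$-module $Z$ gives $Z = [Z,T]\, C_Z(T) \subset [U,T]\, C_U(T)$, completing the induction. The crucial lifting step is a standard consequence of coprime action: given $uZ \in C_{U/Z}(T)$, the coset $uZ$ is $T$-invariant and the $T$-action on it determines a $1$-cocycle $T \to Z$ whose class vanishes in $H^1(T,Z)$ because $\gcd(|T|,|Z|) = 1$. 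Concretely, averaging produces an element of $uZ$ fixed by $T$.

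Finally, for $[U,T] = [U,T,T]$, set $V = [U,T]$ and pass to $\bar U = U/[V,T]$, where $T$ centralizes $\bar V$ by construction. Applying the identity just proved to $\bar U$ yields $\bar U = [\bar U,T] \cdot C_{\bar U}(T) = \bar V \cdot C_{\bar U}(T)$, so $\bar V = [\bar U,T] = [\bar V \cdot C_{\bar U}(T),T]$. Since $T$ centralizes both factors on the right, this commutator is trivial, giving $\bar V = 1$, i.e., $V = [V,T]$.

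The main obstacle is the fixed-point lifting used in the inductive step: showing that $C_U(T) \to C_{U/Z}(T)$ is surjective under coprime action. This is where the coprimality hypothesis genuinely enters beyond the abelian case, and it relies either on the cocycle/Maschke argument sketched above or, equivalently, on \Cref{thm:SZ} applied inside a suitable subgroup of $U \rtimes T$. Everything else is formal manipulation of commutators.
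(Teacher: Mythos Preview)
The paper does not give its own proof of this lemma; it simply cites Aschbacher~(24.4--6). Your argument is correct and is essentially the standard proof found there: averaging to get the abelian splitting, induction on $|U|$ via the center together with the coprime fixed-point lifting $C_U(T)\twoheadrightarrow C_{U/Z}(T)$, and then the quotient trick $U/[V,T]$ to deduce $[U,T]=[U,T,T]$. Two cosmetic remarks: in the abelian paragraph your two sentences ``its kernel contains $[U,T]$'' and ``the image of $[U,T]$ under $\pi$ is $0$'' say the same thing; what you actually want to add is $u-\pi(u)=|T|^{-1}\sum_t(u-t\cdot u)\in[U,T]$, which gives $\ker\pi\subset[U,T]$ and hence equality. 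And in the final paragraph, the step $[\bar V\cdot C_{\bar U}(T),T]=1$ is justified elementwise by $[ab,t]=[a,t]^b[b,t]=1$, which you might make explicit.
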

%        \begin{proof}
%            First suppose $U$ is abelian.
%            Then for any $u \in U$ we have
%            \[
%                u^{|T|} = \prod_{t \in T} u^t \times \prod_{t \in T} [t, u],
%            \]
%            which proves that $U = [U,T] C_U(T)$.
%            Moreover if $u \in [U, T] \cap C_U(T)$ then the same formula shows $u^{|T|} = 1$, so $U = [U, T] \times C_U(T)$.
%
%            In general, consider $U^* = U / \frat(U)$.
%            Since $U^*$ is abelian we have $U^* = [U^*, T] \times C_{U^*}(T)$.
%            Clearly $[U^*, T] = [U, T]^*$,
%            and $C_{U^*}(T) = C_U(T)^*$ by \cite{aschbacher}*{(18.7.4)},
%            so $U = [U, T] C_U(T) \frat(U)$.
%            By definition of $\frat(U)$ this implies $U = \langle [U, T], C_U(T)\rangle = [U, T] C_U(T)$, since $[U, T] \nsgp UT = G$.
%
%            Next, $[U, T, T] \nsgp [U, T] T$ and clearly $[U, T, T]$ is normalized by $C_U(T)$, so $[U, T, T] \nsgp [U, T] T C_U(T) = G$.
%            Hence $U = [U, T] C_U(T)$ implies $[U, T] \leq [U, T, T]$, so $[U, T] = [U, T, T]$.
%        \end{proof}

        In general the intersection $[U, T] \cap C_U(T)$ is nontrivial.
        For example, let
        \[
            G = \left\{ \begin{pmatrix}
            1 & x & z \\
            0 & a & y \\
            0 & 0 & 1 \\
            \end{pmatrix}
            : a \in \F_p^\times, x, y, z \in \F_p
            \right\} = TU,
        \]
        where $T = G \cap T_3(\F_p)$ and $U = U_3(\F_p)$.
        Then $[U, T] = U$ and $C_U(T) = Z(U) \cong C_p$.
        However, the intersection $[U, T] \cap C_U(T)$ is always contained in the commutator subgroup of $[U, T]$.

        \begin{lemma}
            \label{lem:CHT}
            Let $H = [U, T]$. Then $C_{H / H'}(T) = 1$.
        \end{lemma}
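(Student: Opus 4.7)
The plan is to reduce the statement to the abelian case of \Cref{lem:U-decomp}. Since $[U,T]$ is normal in $G = U \rtimes T$, the subgroup $H$ is $T$-invariant, and it is a $p$-group (as a subgroup of $U$). I would start by noting that \Cref{lem:U-decomp} already supplies the idempotence identity $[U,T] = [U,T,T]$, which rewrites as $H = [H,T]$. This is the key input: the commutator operation ``saturates'' on $H$.

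Next I would project onto the abelianization $H/H'$, which is a $T$-invariant abelian $p$-group. The image of the generating set $\{[h,t] : h \in H, t \in T\}$ of $[H,T]$ is exactly the set of action-commutators on $H/H'$, so the image of $[H,T]$ in $H/H'$ equals $[H/H', T]$. The identity $H = [H,T]$ therefore descends to
\[
    H/H' = [H/H', T].
\]

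Now I would apply \Cref{lem:U-decomp} a second time, this time to the coprime action of the $p'$-group $T$ on the abelian $p$-group $H/H'$. The lemma gives the direct decomposition $H/H' = [H/H', T] \times C_{H/H'}(T)$, and the final clause asserts that $[H/H', T] = H/H'$ forces $C_{H/H'}(T) = 1$, which is exactly the conclusion.

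There is no real obstacle here: everything follows by combining the two halves of \Cref{lem:U-decomp} with the straightforward observation that commutators pass to the abelianization. The one point to verify carefully is that $T$ acts on $H$ and hence on $H/H'$ (so that $[H,T]$ and $C_{H/H'}(T)$ make sense), but this is immediate from the normality of $[U,T]$ in $G$.
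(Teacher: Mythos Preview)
Your proof is correct and follows essentially the same route as the paper: use $[U,T]=[U,T,T]$ from \Cref{lem:U-decomp} to get $H=[H,T]$, pass to the abelianization to obtain $H/H'=[H/H',T]$, and then apply the abelian splitting in \Cref{lem:U-decomp} to force $C_{H/H'}(T)=1$. The paper's version is terser but the argument is identical.
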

        \begin{proof}
            By \Cref{lem:U-decomp}, $H = [H, T]$.
            It follows that $\bar H = [\bar H, T]$ where $\bar H = H / H'$.
            On the other hand, by \Cref{lem:U-decomp}, $\bar H = [\bar H, T] \times C_{\bar H}(T)$.
            Hence $C_{\bar H}(T) = 1$.
        \end{proof}

        \begin{lemma}
            \label{lem:gamma_omega}
            Assume $\gamma_n(U) = \gamma_n(T) = 1$.
            Then $\gamma_\omega(G) = \gamma_n(G) = [U, T]$.
        \end{lemma}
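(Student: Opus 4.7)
Write $H = [U,T]$. Since $U \nsgp G$, we have $H \le U$, so in particular $H$ is a $p$-group and $H \cap T = 1$.

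The plan is to sandwich $H$ between $\gamma_n(G)$ and $\gamma_\omega(G)$. First I would show $H \le \gamma_k(G)$ for every $k \ge 2$ by induction on $k$. The base case $k=2$ is immediate from $H = [U,T] \le [G,G] = \gamma_2(G)$. For the inductive step, \Cref{lem:U-decomp} applied to the action of $T$ on $U$ gives $H = [U,T] = [U,T,T] = [H,T]$, so
\[
H = [H,T] \le [\gamma_k(G), G] = \gamma_{k+1}(G).
\]
Taking the intersection over all $k$ yields $H \le \gamma_\omega(G)$.

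For the reverse containment I would pass to the quotient $G/H$. Since $H \le U$ and $H \cap T = 1$, we may still regard $T$ as a complement to $U/H$ in $G/H$. By construction $[U,T] = H$, so $T$ acts trivially on $U/H$, which means the semidirect product collapses to a direct product $G/H \cong (U/H) \times T$. Consequently
\[
\gamma_n(G/H) = \gamma_n(U/H) \times \gamma_n(T).
\]
The hypothesis $\gamma_n(T) = 1$ kills the second factor, and the hypothesis $\gamma_n(U) = 1$ gives $\gamma_n(U/H) = \gamma_n(U) H/H = 1$ for the first. Hence $\gamma_n(G) \le H$.

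Combining these two chains gives $H \le \gamma_\omega(G) \le \gamma_n(G) \le H$, so all three coincide. There is no real obstacle here: the only point that requires any care is verifying that $T$ descends faithfully to a complement in $G/H$ so that $G/H$ genuinely splits as a direct product, which follows from the coprimality of $|U|$ and $|T|$.
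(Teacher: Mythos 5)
Your proof is correct and follows essentially the same route as the paper's: the lower bound $[U,T]\le\gamma_\omega(G)$ comes from $[U,T]=[U,T,T]$, and the upper bound $\gamma_n(G)\le[U,T]$ comes from observing that $G/[U,T]\cong (U/[U,T])\times T$ is nilpotent of class at most $n$. You have merely made the induction explicit and fleshed out the direct-product verification; one small remark is that the splitting of $G/H$ as a direct product does not really need coprimality of $|U|$ and $|T|$ --- it already follows from $T\cap U=1$ in the semidirect product together with $T$ acting trivially on $U/H$.
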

        \begin{proof}
            By \Cref{lem:U-decomp}, $[U, T] = [U, T, T]$. It follows that $[U, T]  \leq \gamma_\omega(G)$.
            On the other hand $G / [U,T] \cong U / [U, T] \times T$
            is nilpotent of class at most $n$, so $[U, T] \geq \gamma_n(G)$.
        \end{proof}

    \section{Finitization}
    \label{sec:finitization}

    The following well-known theorem of Mal'cev asserts that finitely generated linear groups are residually finite.
    More strongly, it asserts that we can distinguish the elements of any finite subset using a finite residue field.
    We will use it to reduce \Cref{thm:main} and \Cref{thm:main-soluble} to the finite field case.

    \begin{theorem}[Mal'cev, see \cite{wehrfritz}*{Theorem~4.2}]
        \label{thm:malcev-locally-residually-finite}
        Let $\F$ be a field, $G \leq \GL_n(\F)$ a finitely generated subgroup,
        and $S \subset G$ a finite subset.
        Then there is a finite field $\K$ and a homomorphism $\pi : G \to \GL_n(\K)$ such that $\pi$ is injective on $S$.
        If $\chr \F > 0$ then $\chr \K  =\chr \F$.
    \end{theorem}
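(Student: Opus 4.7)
The plan is to use a standard specialization argument: realize $G$ and $S$ inside $\GL_n(R)$ for a finitely generated commutative $\Z$-subalgebra $R$ of $\F$, then reduce modulo a well-chosen maximal ideal of $R$.

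First I would fix a finite generating set $g_1,\dots,g_k$ of $G$ and let $R \subset \F$ be the $\Z$-subalgebra generated by all matrix entries of $g_1^{\pm 1},\dots,g_k^{\pm 1}$ together with the entries of the elements of $S$. Then $G \leq \GL_n(R)$ and $S \subset \GL_n(R)$, and $R$ is a finitely generated commutative ring. For each pair of distinct $s,s' \in S$, pick some matrix entry of $s-s'$ that is nonzero in $R$, and let $r \in R$ be the product of these witness entries; then $r \neq 0$ because $R$ embeds in the field $\F$ and hence is a domain.

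The heart of the proof is to find a maximal ideal $\mathfrak{m}$ of $R$ with $r \notin \mathfrak{m}$. Since $R$ is a finitely generated commutative $\Z$-algebra it is a Jacobson ring (Hilbert's Nullstellensatz), so its Jacobson radical equals its nilradical, which is zero because $R$ is a domain. Hence some maximal ideal $\mathfrak{m}$ of $R$ avoids $r$. Setting $\K = R/\mathfrak{m}$, the arithmetic form of Zariski's lemma gives that $\K$ is a \emph{finite} field; and if $\chr \F = p > 0$ then $p = 0$ in $R$ already, so $\chr \K = p$ as required.

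Finally, reducing entries modulo $\mathfrak{m}$ induces a ring homomorphism $\GL_n(R) \to \GL_n(\K)$, which restricts to a group homomorphism $\pi : G \to \GL_n(\K)$. By construction of $r$, the image in $\K$ of the chosen witness entry of $s-s'$ is nonzero for every pair of distinct $s,s' \in S$, so $\pi(s) \neq \pi(s')$. The only nontrivial ingredient is the commutative-algebra fact that maximal ideals of finitely generated $\Z$-algebras have finite residue fields; this is the main obstacle in spirit, but since it is classical it would just be cited rather than reproved. Everything else is formal.
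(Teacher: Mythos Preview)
Your argument is correct and is essentially the standard specialization proof of Mal'cev's theorem. The paper itself does not give a proof of this statement: it is quoted as a known result with a reference to Wehrfritz, so there is no in-paper proof to compare against. Two minor cosmetic remarks: the map $\GL_n(R)\to\GL_n(\K)$ is a \emph{group} homomorphism (you wrote ``ring homomorphism''), and adjoining the entries of $S$ to $R$ is redundant since $S\subset G$ already has entries in the ring generated by those of $g_i^{\pm1}$; neither affects the validity of the proof.
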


    \begin{proposition}
        \label{prop:main-finitization}
        If \Cref{thm:main} holds for finite fields then it holds in general.
    \end{proposition}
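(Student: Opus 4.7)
The plan is to apply Mal'cev's theorem (\Cref{thm:malcev-locally-residually-finite}) to reduce to a finite-field image $\pi(G) \le \GL_n(\K)$, invoke the assumed finite-field case of \Cref{thm:main} inside $\pi(G)$, and lift the resulting normal subgroup structure back to $G = \gen{A}$ by arranging $\pi$ to be injective on a sufficiently large power of $A$.

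Let $C_0 = C_0(n)$ be the implied constant in the finite-field version, so $H' \subset \pi(A)^{C_0}$. Apply Mal'cev with $S = A^M$ for $M$ chosen sufficiently large (a finite quantity that may depend on $n$, $K$, and on the index $[G:\Gamma]$ to be determined, which is always finite because $\pi(G)$ is finite) to obtain $\pi : G \to \GL_n(\K)$ injective on $S$, with $\chr\K = \chr\F$ when $\chr\F > 0$. The image $\pi(A)$ is symmetric and $K$-tripling by injectivity on $A^3$, so the finite-field version of \Cref{thm:main} yields $H' \nsgp \Gamma' \nsgp \pi(G)$ with the listed properties; by Remark~(a) we take $H' = \gamma_n(\Gamma')$ and $P' = $ the perfect core of $\Gamma'$.

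Define $\Gamma := \pi^{-1}(\Gamma')$, which is normal in $G$ and covers $A$ in $K^{O_n(1)}$ cosets (by lifting coset representatives from $\pi(A)$), and $H := A^{C_0} \cap \pi^{-1}(H')$. The technical heart is the repeated use of the following lifting principle: if $x, a \in A^L \subset S$ and $\pi(x) = \pi(a)$, then $x = a$. Using this: (i) $H$ is a subgroup, since for $h_1, h_2 \in H$ we have $h_1 h_2 \in A^{2C_0}$ and $\pi(h_1 h_2) \in H' \subset \pi(A^{C_0})$, so its unique preimage $a \in A^{C_0}$ must coincide with $h_1 h_2$. (ii) $H \nsgp \Gamma$: by Schreier's lemma (\Cref{lem:schreier}) applied to $A^{[G:\Gamma]} \Gamma = G$, $\Gamma$ is generated by $A^{\pm 3[G:\Gamma]} \cap \Gamma$; for any such Schreier generator $g$ and $h \in H$, the conjugate $g^{-1}hg$ lies in a bounded power of $A$ inside $S$ and projects into $H'$, so by lifting it lies in $A^{C_0} \cap \pi^{-1}(H') = H$. (iii) $\gamma_n(\Gamma) \le H$: an $n$-fold iterated commutator of Schreier generators lies in $S$ and projects into $\gamma_n(\Gamma') = H'$, so by lifting it lies in $H$; since $H \nsgp \Gamma$ and $\gamma_n(\Gamma)$ is the normal closure in $\Gamma$ of these commutators, $\gamma_n(\Gamma) \le H$. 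Therefore $\Gamma/H$ is a quotient of $\Gamma/\gamma_n(\Gamma)$ and is nilpotent of step $\le n-1$.

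The moreover statements lift analogously: $P := xA^3 \cap \pi^{-1}(P')$ (with $x \in A^3$ lifting the translate of $\pi(A)^3$ containing $P'$) is a perfect subgroup isomorphic to $P'$ under $\pi$, contained in a translate of $A^3$, normal in $H$ with $H/P \cong H'/P'$ a $p$-group. In characteristic zero we additionally choose $\chr\K$ large enough that, by Jordan's theorem on finite subgroups of characteristic-zero linear groups, no $\Lie^*(p)$-factor can appear in $P'$; this forces $P' = 1$ (a perfect soluble group is trivial) and $H' = 1$ (its $p$-group quotient $H'/P'$ also being trivial for such $p$), giving $H = 1$. The main obstacle is coordinating the choice of $M$, and in characteristic zero also $\chr\K$, so that all the injectivity-based lifting arguments succeed simultaneously; both can be arranged within the freedom of \Cref{thm:malcev-locally-residually-finite}, after which the lifting is routine.
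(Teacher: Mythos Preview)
Your argument has a genuine circularity that you acknowledge but do not resolve. You need $M$ large enough that $S = A^M$ contains the Schreier generators $A^{\pm 3[G:\Gamma]} \cap \Gamma$ and the various commutators and conjugates built from them, but $[G:\Gamma] = [\pi(G):\Gamma']$ depends on $\pi$, which depends on $S = A^M$, which depends on $M$. The bound $K^{O_n(1)}$ controls only the number of cosets of $\Gamma$ needed to \emph{cover $A$}, not the index $[G:\Gamma]$ itself: since $\pi(G)/\Gamma'$ is merely generated by a set of size $K^{O_n(1)}$, its order can be as large as $|\pi(G)|$ allows, and you have no a~priori control over $|\pi(G)|$. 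Saying that $[G:\Gamma]$ ``is always finite because $\pi(G)$ is finite'' does not break the loop, and the claim that ``both can be arranged within the freedom of \Cref{thm:malcev-locally-residually-finite}'' is exactly what needs proof.

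The paper breaks the circularity differently: it lets $S$ vary freely, replaces $\Gamma$ by the normal closure of $\langle A^2 \cap \Gamma\rangle$ (preserving the two properties via \Cref{lem:covering-observation}), and observes that such a $\Gamma$ is determined by a subset of the finite set $A^2$, so only finitely many candidates occur. One candidate therefore works for a cofinal family of sets $S$, and with $\Gamma$ now fixed independently of $S$ one proves $\gamma_n(\Gamma) \subset A^m$ element by element, taking $S \supset A^m \cup \{x\}$ for each $x \in \gamma_n(\Gamma)$. No Schreier generators are needed.

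Your characteristic-zero argument has a related gap. You correctly argue that for large $p = \chr\K$ one gets $P' = 1$ (Jordan applied to the lifted copy in $\GL_n(\F)$ kills any $\Lie^*(p)$ quotient), whence $H'$ is a $p$-group; but the assertion ``$H' = 1$ (its $p$-group quotient $H'/P'$ also being trivial for such $p$)'' is unjustified. A $p$-group in $\GL_n(\K)$ need not be trivial, and its lifted copy $H \le \GL_n(\F)$ can be a large abelian $p$-group even in characteristic zero (already $\GL_1(\C)$ contains $C_{p^k}$ for all $k$). In the paper's argument this is where fixing $\Gamma$ independently of $S$ is decisive: then $H = \gamma_n(\Gamma)$ is a \emph{fixed} finite group whose structure must be compatible with arbitrarily large $p$, which forces $H = 1$.
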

    \begin{proof}
    Let $\F$ be an arbitrary field and let $A \subset \GL_n(\F)$ be finite, nonempty, symmetric, and $K$-tripling.
    Let $G = \langle A \rangle$.
    By \Cref{thm:malcev-locally-residually-finite}, for any finite set $S \subset \GL_n(\F)$ there is a finite field $\K$ and a homomorphism $\pi : \gen {A \cup S} \to \GL_n(\K)$ that is injective on $S$.
    Let $p = \chr K$.
    Note that $\gen {\pi(A)} = \pi(G)$.
    By \Cref{lem:growth-in-sections}(2), $\pi(A)$ is $K^3$-tripling.
    Assuming \Cref{thm:main} holds in the finite field case, there is $m \le O_n(1)$ and $\Gamma_0 \nsgp \pi(G)$ such that
    \begin{enumerate}[(i)]
        \item $\pi(A)$ is covered by $K^m$ cosets of $\Gamma_0$,
        \item $H_0 = \gamma_n(\Gamma_0)$ is contained in $\pi(A)^m$,
        \item $H_0$ has a perfect soluble-by-$\Lie^*(p)$ normal subgroup $P_0$ contained in a translate of $\pi(A)^3$ and $H_0/P_0$ is a $p$-group.
    \end{enumerate}
    Let $\Gamma = \pi^{-1}(\Gamma_0) \cap G$. Then $\Gamma \nsgp G$, $\pi(\Gamma) = \Gamma_0$, and $\pi(\gamma_n(\Gamma)) = \gamma_n(\Gamma_0)$. We thus obtain the following properties:
    \begin{enumerate}[(1)]
        \item $A$ is covered by $K^m$ cosets of $\Gamma$,
        \item $H_0 = \pi(\gamma_n(\Gamma))$ is contained in $\pi(A^m)$,
        \item $H_0$ has a perfect soluble-by-$\Lie^*(p)$ normal subgroup $P_0$ contained in a translate of $\pi(A)^3$ and $H_0/P_0$ is a $p$-group.
    \end{enumerate}
    Moreover, by \Cref{lem:covering-observation}, $A$ is covered by $K^m$ translates of $A^2 \cap \Gamma$,
    and $P_0 \subset \pi(A)^6$,
    so we retain properties (1)--(3) if we replace $\Gamma$ with the subgroup $\gen {(A^6 \cap \Gamma)^G}$ normally generated by $A^6 \cap \Gamma$.
    Thus we may append the following property:
    \begin{enumerate}[(1), resume]
        \item $\Gamma = \gen {(A^6 \cap \Gamma)^G}$.
    \end{enumerate}

    At the moment, $\Gamma$ depends on $S$, but we can eliminate this dependence as follows.
    Call $\Gamma$ \emph{good for $S$} if there is a finite field $\K$ (with $\chr K = \chr F$ if $\chr F > 0$) and some homomorphism $\pi : \gen {A \cup S} \to \GL_n(\K)$ injective on $S$
    such that (1)--(4) hold for $\Gamma$.
    By the argument above, for every finite set $S \subset \GL_n(\F)$ there is some subgroup $\Gamma \nsgp G$ that is good for $S$.
    Moreover, if $S_1 \subset S_2$ and $\Gamma$ is good for $S_2$ then $\Gamma$ is good for $S_1$.
    Since $A^6$ is finite, property (4) implies that there are only finitely many possibilities for $\Gamma$, say $\Gamma_1, \dots, \Gamma_N$.
    Suppose, for each $i$, $\Gamma_i$ is not good for some finite set $S_i$.
    Then none of $\Gamma_1, \dots, \Gamma_N$ is good for the finite set $S = S_1 \cup \cdots \cup S_N$, which is a contradiction.
    Thus we may assume that $\Gamma$ is independent of $S$.

    Now let $H = \gamma_n(\Gamma)$. If $x \in H$ then by applying (2) above with $S = A^m \cup \{x\}$ we obtain $x \in A^m$.
    Thus $H \subset A^m$.
    Moreover, since we may assume $\pi$ is injective on $A^m$, $\pi$ restricts to an isomorphism of $H = \gamma_n(\Gamma)$ with $H_0 = \pi(\gamma_n(\Gamma))$.
    Hence $H$ has a perfect soluble-by-$\Lie^*(p)$ normal subgroup $P \cong P_0$ such that $H / P$ is a $p$-group, where $p = \chr \K > 0$.
    Also, $x_0 P_0 \subset \pi(A)^3$ for some $x_0 \in \pi(G)$.
    In particular $x_0 \in \pi(A)^3 = \pi(A^3)$, so $x_0 = \pi(x)$ for some $x \in A^3$ and $\pi(x P) \subset \pi(A^3)$.
    Since we may assume $\pi$ is injective on $A^3 P$, this implies $x P \subset A^3$. Thus $P$ is contained in a translate of $A^3$.

    If $\chr \F > 0$ then $p = \chr \F$ and we are done. If $\chr \F = 0$ then by choosing $S$ to include many transvections $1 + x e_{12}$ ($x \in \Z$) we can force $p$ to be larger than $|H|$, which implies that $H$ is trivial.
    \end{proof}

    We can reduce \Cref{thm:main-soluble} to the finite field case using almost the same argument.

    \begin{proposition}
        \label{prop:main-soluble-finitization}
        If \Cref{thm:main-soluble} holds for finite fields then it holds in general.
    \end{proposition}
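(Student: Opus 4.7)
The plan is to parallel the proof of \Cref{prop:main-finitization}, with a nested pigeonhole that simultaneously fixes the subgroup $\Gamma$, the ``perfect core'' $P$, and the translating element $g$ across infinitely many residue fields. Let $\F$ be arbitrary, $A \subset \GL_n(\F)$ finite, nonempty, symmetric, and $K$-tripling, and $G = \gen{A}$. For each finite $S \subset G$, \Cref{thm:malcev-locally-residually-finite} furnishes $\pi_S \colon G \to \GL_n(\K_S)$ with $\K_S$ finite and $\pi_S$ injective on $S$ ($\chr \K_S = \chr \F$ when positive, otherwise $\chr \K_S$ arbitrarily large). By \Cref{lem:growth-in-sections}(2), $\pi_S(A)$ is $K^3$-tripling, so the finite-field case of \Cref{thm:main-soluble} produces $P_0^{(S)} \nsgp \Gamma_0^{(S)} \nsgp \pi_S(G)$ satisfying properties (1), (2), (3), where $N_0 = O(\log n)$ bounds the derived length in (2). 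Because $1 \in P_0^{(S)}$, the translating element $g_0^{(S)}$ may be taken in $\pi_S(A)^3$, giving $P_0^{(S)} \subset \pi_S(A)^6$.

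I set $\Gamma^{(S)}$ equal to the normal closure in $G$ of $A^6 \cap \pi_S^{-1}(\Gamma_0^{(S)})$, $P^{(S)} = A^6 \cap \pi_S^{-1}(P_0^{(S)})$, and $g^{(S)}$ equal to the unique preimage in $A^3$ of $g_0^{(S)}$. By \Cref{lem:covering-observation}, $A$ is still covered by $K^m$ cosets of $\Gamma^{(S)}$ with $m = O_n(1)$. The triple $(\Gamma^{(S)}, P^{(S)}, g^{(S)})$ takes values in a finite set, so by a triple pigeonhole I fix a cofinal family $\mathcal S$ of $S$'s, all containing $A^6$, and a single $(\Gamma, P, g)$ occurring throughout $\mathcal S$. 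Then $\pi_S(P) = P_0^{(S)}$ (surjective since $P_0^{(S)} \subset \pi_S(A^6)$, injective since $P \subset A^6 \subset S$) and $P \subset \Gamma$ (since $P$ is included in the generating set of the normal closure defining $\Gamma$).

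The remaining properties then follow pointwise by enlarging $S$ within $\mathcal S$ to obtain injectivity of $\pi_S$ on whatever $A^M$ is required: $P$ is a subgroup (closure under multiplication, $p_1 p_2 \in A^{12}$, by injectivity on $A^{12}$); $P \nsgp \Gamma$ (for $\gamma \in A^k \cap \Gamma$, the element $\gamma p \gamma^{-1} \in A^{2k+6}$ lies in $P$ by injectivity on $A^{2k+6}$); $\pi_S|_P \colon P \to P_0^{(S)}$ is a group isomorphism, so $P$ inherits perfectness and the soluble-by-$\Lie^*(\chr \K_S)$ structure from $P_0^{(S)}$; $P \subset gA^3$ (each $p \in P$ satisfies $\pi_S(p) = \pi_S(ga)$ for some $a \in A^3$, and injectivity on $A^6$ gives $p = ga$); and $\Gamma/P$ has derived length at most $N_0$ (for $x \in \Gamma^{(N_0)}$, $\pi_S(x) \in \pi_S(\Gamma)^{(N_0)} \subset (\Gamma_0^{(S)})^{(N_0)} \subset P_0^{(S)} = \pi_S(P)$, so $\pi_S(x) = \pi_S(p)$ for some $p \in P$, and injectivity on $\{x\} \cup A^6$ gives $x = p \in P$).

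The characteristic matches as in \Cref{prop:main-finitization}: if $\chr \F > 0$ then $\chr \K_S = \chr \F$ automatically; if $\chr \F = 0$, choose $\chr \K_S > K^{12}|A|$, which exceeds the bound $|P_0^{(S)}| \leq |\pi_S(A)^6| \leq K^{12}|A|$ and so rules out any simple Lie-type factor of $P_0^{(S)}/\Sol(P_0^{(S)})$, forcing $P_0^{(S)}$ soluble and, being perfect, trivial, so $P = 1$ matches $\Lie^*(0) = \{1\}$. The main obstacle is the bookkeeping of the triple pigeonhole: once a single $(\Gamma, P, g)$ is fixed compatibly with a cofinal family of residue fields, each abstract group-theoretic property of $P$ reduces to a routine pointwise injectivity check on a suitably enlarged $S$.
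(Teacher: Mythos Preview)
Your proof is correct and follows essentially the same strategy as the paper: use Mal'cev's residual finiteness theorem to pass to finite quotients, then pigeonhole over the finitely many possible lifted data to obtain objects independent of $S$, and verify each required property by choosing $S$ large enough. One small imprecision: a naive pigeonhole on a function from the directed set of finite $S$'s to a finite set need not produce a \emph{cofinal} subfamily. What makes this work (implicitly in the paper too) is monotonicity---if $S \subset S'$ then any triple valid for $S'$ is also valid for $S$ via the same $\pi_{S'}$---so by a standard compactness argument the intersection of the valid-triple sets over all $S$ is nonempty, and you may take $\mathcal S$ to be \emph{all} finite $S \supset A^6$.

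There is one genuine (minor) difference in how $P$ is obtained. The paper first fixes $\Gamma$, then takes $P = \Gamma^{(m)}$ abstractly; this is automatically a characteristic subgroup of $\Gamma$, and one then pigeonholes only on the translating element $x$ to get $\Gamma^{(m)} \subset xA^3$. You instead lift $P_0^{(S)}$ directly as the set $A^6 \cap \pi_S^{-1}(P_0^{(S)})$ and then verify by hand that it is a subgroup, normal in $\Gamma$, etc. Your route costs a bit more bookkeeping but has the advantage that $P \cong P_0^{(S)}$ is an honest group isomorphism, so perfectness and the soluble-by-$\Lie^*(p)$ structure are inherited immediately; in the paper's version one must still argue that $\pi_S(\Gamma)^{(m)}$ (which may be a proper subgroup of $P_0^{(S)}$) has the required structure.
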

    \begin{proof}
        We argue exactly as in the proof of \Cref{prop:main-finitization}.
        We obtain $\Gamma \nsgp G$ and $m = O(\log n)$ such that, for any finite subset $S \subset \GL_n(\F)$, there is a finite field $\K$ (with $\chr K = \chr F$ if $\chr F > 0$) and a homomorphism $\pi : \gen{A \cup S} \to \GL_n(\K)$ injective on $S$ such that
        \begin{enumerate}[(1)]
            \item $A$ is covered by $K^{O_n(1)}$ cosets of $\Gamma$
            \item $P_0 = \pi(\Gamma)^{(m)}$ is perfect, soluble-by-$\Lie^*(p)$, where $p = \chr K$, and contained in a translate of $\pi(A)^3$.
        \end{enumerate}
        Let $P = \Gamma^{(m)}$ and note $\pi(P) = P_0$.
        By (2), there is some $x_0 \in \pi(G)$ such that $x_0 P_0 \subset \pi(A)^3$. In particular $x_0 \in \pi(A)^3 = \pi(A^3)$, so $x_0 = \pi(x)$ for some $x \in A^3$, and $\pi(x P) \subset \pi(A^3)$.
        A priori $x$ depends on $S$, but since $x \in A^3$ and $A^3$ is finite we can eliminate this dependence as in the previous proof.
        Now if if $y \in P$ then by taking $S = A^3 \cup \{xy\}$ it follows that $xy \in A^3$.
        Thus $x P \subset A^3$.
        In particular $P$ is finite, and taking $S \supset P$ gives $P \cong \pi(P) = P_0$.
        Thus $P$ is perfect, soluble-by-$\Lie^*(p)$, and contained in a translate of $A^3$.
        If $\chr F > 0$ then $p = \chr F$ and we are done, and otherwise we may choose $S$ to force $p$ to be larger than $|P|$, which shows that $P$ must be trivial.
    \end{proof}

    \section{Trigonalization}
    \label{sec:trig}

    In this section and the next we extensively use arguments from the theory of linear groups, particularly soluble linear groups. A nice general reference on linear groups is \cite{wehrfritz}.

    Call a subgroup of $\GL_n(\F)$ \emph{trigonal} if it is contained in $B_n(\F)$,
    and \emph{trigonalizable} if it is conjugate to a subgroup of $B_n(\E)$ for some extension $\E$ of $\F$.
    Virtually soluble is equivalent to virtually trigonalizable, by another well-known theorem of Mal'cev.

    \begin{theorem}[Mal'cev, see \cite{wehrfritz}*{Theorem~3.6}]
        \label{thm:trigonalization}
        Every soluble subgroup $G$ of $\GL_n(\F)$
        has a trigonalizable normal subgroup $G_0$ of index $O_n(1)$.
    \end{theorem}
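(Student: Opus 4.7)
The plan is to pass to the Zariski closure of $G$ and apply the Lie--Kolchin theorem to its identity component, then induct on $n$ to handle the component group. First, we may assume $\F$ is algebraically closed, since trigonalizability over an extension of $\F$ is preserved by enlarging $\F$. Let $H \leq \GL_n(\F)$ denote the Zariski closure of $G$; the commutator map is a morphism and the derived subgroup of an algebraic group is closed, so $H$ is a soluble algebraic group. Its identity component $H^0$ is connected and soluble, hence by Lie--Kolchin is contained in some conjugate of $B_n(\F)$. It will suffice to produce a normal subgroup $H_0 \nsgp H$ of index $O_n(1)$ contained in such a conjugate, since $G_0 := G \cap H_0$ is then normal in $G$ of no greater index and is trigonal.

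The construction of $H_0$ proceeds by induction on $n$. If the natural representation $V = \F^n$ of $H$ has a proper nonzero invariant subspace $W$, apply the inductive hypothesis to the actions of $H$ on $W$ and on $V/W$, and intersect the resulting bounded-index normal subgroups: in a basis adapted to $W$, the intersection acts by block-upper-triangular matrices and is therefore trigonal after one change of basis. Otherwise $V$ is $H$-irreducible, and Clifford's theorem applied to the normal subgroup $H^0 \nsgp H$ decomposes $V = V_1 \oplus \cdots \oplus V_k$ into $H^0$-isotypic components, with $k \leq n$, permuted transitively by $H/H^0$. When $k \geq 2$, the kernel of this permutation action is normal of index at most $n!$ and preserves each $V_i$, reducing to the reducible case above.

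The remaining, and hardest, case is when $V$ is $H^0$-isotypic; Lie--Kolchin on $H^0$ then forces $H^0$ to act by scalars on $V$, so $H^0 \leq Z := Z(\GL_n(\F))$, and $H/(H \cap Z)$ is a finite soluble subgroup of $\opr{PGL}_n(\F)$. The main obstacle is that no dimension count directly bounds this finite group: we must invoke a Jordan--Dixon-type theorem asserting that every finite soluble subgroup of $\opr{PGL}_n(\F)$ has a trigonalizable normal subgroup of index $O_n(1)$, where in characteristic $p>0$ one separately treats the unipotent part (which is automatically trigonalizable) and the $p'$-part (controlled by Jordan--Dixon). Pulling back through the central extension $H \cap Z \to H \to H/(H \cap Z)$ and using that $H \cap Z$ consists of scalars, hence lies in every conjugate of $B_n(\F)$, gives the required $H_0$ and completes the induction.
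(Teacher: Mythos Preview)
The paper does not give its own proof of this statement; it is quoted from Wehrfritz with a bare citation. So there is nothing in the paper to compare against, and the only question is whether your argument is complete on its own.

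Your reduction via the Zariski closure, Lie--Kolchin, and Clifford theory is sound through the reducible and imprimitive cases (modulo the routine fix of replacing the subgroup produced by induction with its normal core in $H$). The genuine gap is the last case, where $H^0$ acts by scalars and you are left with a finite soluble subgroup of $\opr{PGL}_n(\F)$. You write that ``we must invoke a Jordan--Dixon-type theorem,'' but this invoked statement is exactly the finite case of the theorem you are proving, and your one-line sketch does not establish it. In particular, ``separately treat the unipotent part and the $p'$-part'' does not work as stated: for a finite soluble $G$ in characteristic $p$ one has $O_p(G)\nsgp G$, but $G/O_p(G)$ need not be a $p'$-group (e.g.\ $S_3\le\GL_2(\F_2)$, where $O_2(S_3)=1$), so Jordan's theorem for $p'$-groups does not apply to the quotient. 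What is actually required in this base case is the bound on \emph{primitive} soluble linear groups: if $G\le\GL_n(\F)$ is soluble and primitive then every abelian normal subgroup is scalar, and one bounds $|G/Z(G)|$ by analysing the Fitting subgroup (extraspecial-type modulo the centre) and its automorphism group. That bound is the substantive content of Mal'cev's theorem; your Zariski-closure reduction to the finite case, while valid, postpones this work rather than replacing it.
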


    It follows immediately from \Cref{thm:trigonalization} that any soluble subgroup of $\GL_n(\F)$ has derived length $O_n(1)$,
    a result originally proved by Zassenhaus.
    This bound was sharpened by Newman to $O(\log n)$: see \cite{wehrfritz}*{Chapter 3}.
    The following lemma generalizes this bound to virtually soluble subgroups.

    \begin{lemma}
        \label{lem:P}
        Let $\Gamma$ be a finite subgroup of $\GL_n(\F)$.
        \begin{enumerate}[(1)]
            \item For any $m \geq 0$ there is a soluble subgroup $S \leq \Gamma$ such that $\Gamma = \Gamma^{(m)}S$.
            \item There is $m = O(\log n)$ such that $\Gamma^{(m)}$ is perfect.
        \end{enumerate}
    \end{lemma}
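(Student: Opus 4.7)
The plan is to prove the two parts in sequence, using a general group-theoretic lifting lemma for (1) and the Zassenhaus--Newman bound for soluble linear groups for (2).

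For part (1), I would establish the following claim for arbitrary finite groups (no linearity needed): if $\pi : \Gamma \to Q$ is any surjection onto a soluble group, then there is a soluble $S \leq \Gamma$ with $\pi(S) = Q$. Applied to the natural surjection $\Gamma \to \Gamma/\Gamma^{(m)}$, this gives (1) immediately. The claim is proved by induction on $|\Gamma|$. Writing $K = \ker\pi$, if $K = 1$ we take $S = \Gamma$; otherwise choose a minimal normal subgroup $M$ of $\Gamma$ contained in $K$ (such an $M$ exists since any nontrivial normal subgroup of $\Gamma$ does). If $M$ is abelian, induct on $\Gamma/M$ and lift, noting that an abelian-by-soluble extension is soluble. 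If $M$ is nonabelian (necessarily a direct product of nonabelian simple groups), then $\pi(M)$ is both perfect and soluble, hence trivial, so $M \leq K$ automatically; pick any nontrivial Sylow subgroup $P \leq M$, which cannot be normal in $M$, and apply the Frattini argument $\Gamma = M N_\Gamma(P)$. Since $\pi(M) = 1$ we have $\pi(N_\Gamma(P)) = Q$, and $N_\Gamma(P) < \Gamma$, so induction applies.

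For part (2), finiteness of $\Gamma$ ensures that the derived series stabilizes at some step $m_0$, giving $\Gamma^{(m_0)} = P := \bigcap_m \Gamma^{(m)}$, the perfect core. Applying (1) with this particular $m_0$ produces a soluble subgroup $S \leq \Gamma$ with $\Gamma = PS$. Since $S$ is a soluble subgroup of $\GL_n(\F)$, Mal'cev's \Cref{thm:trigonalization} combined with the Zassenhaus--Newman bound recalled in the text gives that $S$ has derived length $O(\log n)$. Now $\Gamma/P = PS/P \cong S/(S \cap P)$ is a quotient of $S$, so its derived length is also $O(\log n)$; hence $\Gamma^{(c \log n)} \leq P$ for a constant $c = c(n)$. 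Since $P \leq \Gamma^{(m)}$ for every $m$, in fact $\Gamma^{(c \log n)} = P$ is perfect, as required.

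The main obstacle is the nonabelian minimal normal case in part (1): the Frattini argument must produce a strictly proper subgroup, which comes down to the standard fact that Sylow subgroups of nonabelian simple groups are never normal. A subtlety in part (2) worth flagging is that we do not require any a priori bound on the stabilization depth $m_0$ before invoking (1); the Newman bound on the derived length of $S$ retroactively forces the derived series to reach $P$ within $O(\log n)$ steps.
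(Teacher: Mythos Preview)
Your proof is correct. For part (1) you take a genuinely different route from the paper: the paper chooses a \emph{minimal} subgroup $S$ with $\Gamma = \Gamma^{(m)} S$, observes that minimality forces $\Gamma^{(m)} \cap S \leq \frat(S)$ (hence nilpotent), and concludes $S$ is nilpotent-by-soluble. Your inductive argument via a minimal normal subgroup and the Frattini/Sylow argument is longer but more elementary, avoiding the Frattini-subgroup trick; it also proves the slightly more general statement that any soluble quotient of a finite group lifts to a soluble subgroup. For part (2) the two arguments are close in spirit: the paper fixes $m$ to be Newman's bound up front, applies (1) at level $m+1$ to write $\Gamma = P'S$ with $P = \Gamma^{(m)}$, and computes $\Gamma^{(m)} \leq P' S^{(m)} = P'$ directly; you instead first let the series stabilize at some unspecified $m_0$, apply (1) there, and then use Newman's bound on $S$ to retroactively force stabilization by step $O(\log n)$. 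Both yield the same conclusion with the same dependence on Newman's theorem.
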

    \begin{proof}
        (1)
        Let $P = \Gamma^{(m)}$.
        Let $S$ be a minimal subgroup of $\Gamma$ such that $\Gamma = PS$.
        By minimality of $S$, if $M < S$ then $\Gamma > PM = P(P \cap S) M$,
        so $(P \cap S) M < S$.
        Hence $P \cap S \leq \frat(S)$.
        This implies that $P \cap S$ is nilpotent.
        On the other hand $S / (P \cap S) \cong PS / P = \Gamma / P$ is soluble.
        Hence $S$ is soluble.

        (2) For soluble $\Gamma$ this was proved by Newman, as mentioned.
        In general let $m$ be large enough for the soluble case and let $\Gamma$ be arbitrary.
        Let $P = \Gamma^{(m)}$.
        By (1), there is soluble $S \leq \Gamma$ such that $\Gamma = P'S$.
        Then $P = \Gamma^{(m)} \leq P' S^{(m)} = P'$, so $P$ is perfect.
    \end{proof}

    We say a group $G$ is \emph{$p$-by-abelian} if $G / O_p(G)$ is abelian;
    if $G$ is finite then by Schur--Zassenhaus (\Cref{thm:SZ}) this is equivalent to a semidirect decomposition $G = O_p(G) T$ for some abelian $p'$-group $T \leq G$.

    \begin{lemma} \label{trigonalizable-subgroups-are-PT}
        Let $\F$ be a field of characteristic $p > 0$.
        A finite subgroup $G \leq \GL_n(\F)$ is trigonalizable if and only if it is $p$-by-abelian.
        In this case $G / O_p(G)$ is the direct product of at most $n$ cyclic groups and $\gamma_n(O_p(G)) = 1$.
    \end{lemma}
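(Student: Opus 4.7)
The plan is to prove both implications, with the ``direct product of at most $n$ cyclic groups'' clause coming for free in the forward direction.

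For the $(\Rightarrow)$ direction, after replacing $G$ by a conjugate I may assume $G \leq B_n(\E)$ for some extension $\E/\F$, and work with the standard projection $\pi : B_n(\E) \to T_n(\E)$ with kernel $U_n(\E)$. In characteristic $p$ every $p$-element of $\GL_n(\E)$ is unipotent, and an upper-triangular unipotent element must have all diagonal entries equal to $1$ and hence lie in $U_n(\E)$; this gives $O_p(G) \subseteq G \cap U_n(\E)$, while the reverse inclusion is automatic since $G \cap U_n(\E)$ is a normal $p$-subgroup of $G$. Thus $O_p(G) = G \cap U_n(\E)$ and $G / O_p(G)$ embeds into $T_n(\E) \cong (\E^\times)^n$, which is abelian. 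For the cyclic-factor count, each coordinate projection sends $G/O_p(G)$ into a finite subgroup of $\E^\times$, which is cyclic, so $G/O_p(G)$ embeds into a product of $n$ cyclic groups; the structure theorem for finite abelian groups then shows $G/O_p(G)$ is itself a direct product of at most $n$ cyclic groups.

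For the $(\Leftarrow)$ direction, assume $G/O_p(G)$ is abelian. I would first note that $G/O_p(G)$ is automatically a $p'$-group: any $p$-element $xO_p(G)$ generates a cyclic, and hence normal, subgroup of the abelian quotient $G/O_p(G)$, whose preimage would be a normal $p$-subgroup of $G$ strictly containing $O_p(G)$ unless $x \in O_p(G)$. Schur--Zassenhaus (\Cref{thm:SZ}) then yields a semidirect decomposition $G = U \rtimes T$ with $U = O_p(G)$ and $T$ an abelian $p'$-group. To trigonalize, I build a complete $G$-invariant flag in $V = \bar\F^n$: the subspace $V^U$ of $U$-fixed vectors is nonzero since $U$ is a finite $p$-group acting in characteristic $p$, and is $G$-invariant by normality of $U$; on $V^U$, the abelian $p'$-group $T \cong G/U$ acts simultaneously diagonalizably over $\bar\F$, so a common eigenvector spans a $G$-invariant line $L$. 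Passing to $V/L$ and iterating yields a complete $G$-invariant flag. Only finitely many scalars occur in the resulting adapted basis, so $G$ is conjugate into $B_n(\E)$ for a finite extension $\E/\F$, completing trigonalizability.

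The main subtlety is the small step that excludes $p$-torsion from $G/O_p(G)$, which is what allows Schur--Zassenhaus to produce an honest abelian $p'$-complement $T$ rather than merely a $p$-by-abelian complement; once this is in hand the rest is a standard blend of Schur--Zassenhaus, the fixed-point theorem for $p$-groups acting in characteristic $p$, and simultaneous diagonalization of finite abelian $p'$-groups over an algebraically closed field. No growth or tripling input is needed: this is a purely structural lemma about finite linear groups.
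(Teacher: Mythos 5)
Your proof is correct, and the $(\Leftarrow)$ direction takes a genuinely different route from the paper's. The paper argues via representation theory: take a $G$-composition factor $V$ of $\bar\F^n$, apply Clifford's theorem to conclude the restriction of $V$ to $U=O_p(G)$ is semisimple, observe that the only simple $\F U$-module in characteristic $p$ is trivial, hence $U$ acts trivially on $V$, and then invoke Schur's lemma for the abelian quotient $G/U$ to get $\dim V = 1$. This requires no $p'$-observation and no Schur--Zassenhaus, and never actually names a complement $T$; it simply shows all composition factors are one-dimensional. You instead construct the flag by hand: you first extract the small but necessary fact that $G/O_p(G)$ is a $p'$-group, then split off an abelian $p'$-complement $T$ via Schur--Zassenhaus, and build a $G$-invariant line inside the nonzero fixed space $V^U$ by simultaneous diagonalization of $T$, iterating on the quotient. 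Both approaches are sound; the paper's Clifford argument is shorter and sidesteps the need for a $p'$-complement, whereas yours is more concrete and matches the semidirect-product form $G = O_p(G) T$ that the paper uses freely in later sections. The forward direction in your proposal is essentially identical to the paper's, modulo spelling out why a finite subgroup of $(\E^\times)^n$ has at most $n$ cyclic invariant factors (worth a sentence about $\ell$-torsion rank, but both you and the paper leave this to the reader).
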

    \begin{proof}
        We may assume $\F$ is algebraically closed.
        If $G$ is trigonalizable then without loss of generality $G \leq B_n(\F)$,
        so $O_p(G) = G \cap U_n(\F)$
        and $G / O_p(G) \cong GU_n(\F) / U_n(\F)$ is isomorphic to a subgroup of $T_n(\F) \cong (\F^\times)^n$, hence a direct product of at most $n$ cyclic groups.

        Conversely suppose $G$ is $p$-by-abelian.
        Let $U = O_p(G)$.
        Let $V$ be a $G$-composition factor of $\F^n$.
        Then $V$ is an irreducible $\F G$-module, so by Clifford's theorem it is a completely reducible $\F U$-module,
        but the only irreducible $\F U$-module is the trivial one-dimensional module, so $U$ acts trivially on $V$.
        Hence $V$ is an irreducible $\F(G / U)$-module, which implies that $\dim V = 1$ since $G / U$ is abelian and $\F$ is algebraically closed.
    \end{proof}

    For $N \nsgp \Gamma \leq \GL_n(\F)$, we say $\Sigma = \Gamma / N$ is a \emph{trigonalizable section} if $\Gamma = BN$ for some trigonalizable subgroup $B \le \GL_n(\F)$.
    By the previous lemma, such a subgroup $B$ must be $p$-by-abelian, so $\Sigma$ must be $p$-by-abelian.
    The converse does not hold. For example, if $\Gamma$ is a nonabelian nilpotent $p'$-group contained in $\GL_n(\F_p)$ then the section $\Sigma = \Gamma / \Gamma'$ is abelian but not covered by a ($p$-by-)abelian subgroup of $\Gamma$.

    The following key proposition establishes a variant of Mal'cev's theorem for soluble sections.
    Moreover we have control over the ``Weyl group'' of the trigonalizable subsection.

    \begin{theorem}
        \label{prop:trig+}
        Let $\F$ be a finite field of characteristic $p$.
        Let $\Sigma = \Gamma / N$ be a soluble section of $L = \GL_n(\F)$.
        Then there is a trigonalizable subsection $\Sigma_0 \csgp \Sigma$ of index $O_n(1)$ containing $O_p(\Sigma)$.
        Moreover, if $R = N_L(\Sigma) = N_L(\Gamma) \cap N_L(N)$ is the section normalizer then $[R : C_R(\Sigma_0 / O_p(\Sigma))] \leq n!$.
    \end{theorem}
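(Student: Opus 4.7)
The plan is to combine Mal'cev's trigonalization theorem (\Cref{thm:trigonalization}) with a weight-space argument. First I reduce to the case where $\Gamma$ is itself soluble: by \Cref{lem:P}, there is a soluble subgroup $S \leq \Gamma$ and some $m = O(\log n)$ with $\Gamma = \Gamma^{(m)} S$ and $\Gamma^{(m)}$ perfect. Since $\Sigma = \Gamma/N$ is soluble, $\Gamma^{(m)}$ maps trivially into $\Sigma$, so $\Gamma^{(m)} \leq N$ and $S$ surjects onto $\Sigma$; replacing $\Gamma$ by $S$ lets me assume $\Gamma$ is soluble. \Cref{thm:trigonalization} then provides a trigonalizable normal subgroup $B \nsgp \Gamma$ of index $O_n(1)$. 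Setting $\Sigma_0^{(0)} := \tr(B,\Sigma) = BN/N$ gives a normal trigonalizable subsection of bounded index, and I extend to $\Sigma_0 := \Sigma_0^{(0)} \cdot O_p(\Sigma)$ so that $O_p(\Sigma) \subseteq \Sigma_0$; the preimage of $\Sigma_0$ in $\Gamma$ is the product of $B$ with a $p$-subgroup, still $p$-by-abelian and so trigonalizable by \Cref{trigonalizable-subgroups-are-PT}. To promote normality to the characteristic property $\Sigma_0 \csgp \Sigma$, I intersect $\Sigma_0$ with the (finitely many) $\Aut(\Sigma)$-images of $\Sigma_0$, using the intrinsic ``$p$-by-abelian'' characterization of trigonalizability in \Cref{trigonalizable-subgroups-are-PT} to ensure all images remain trigonalizable and that the intersection is trigonalizable as a subgroup of any one of them.

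For the Weyl group bound, pass to $\bar V := \bar\F^n$. By \Cref{thm:SZ} applied to $B$, write $B = O_p(B) \rtimes T_B$ for an abelian $p'$-torus $T_B$; over $\bar\F$, $T_B$ is conjugate into the standard diagonal torus and decomposes $\bar V = \bigoplus_{\chi \in X} \bar V_\chi$ into a set $X$ of at most $n$ nonzero weight spaces indexed by characters $\chi : T_B \to \bar\F^\times$. Set $T_0 := \Sigma_0/O_p(\Sigma)$, a quotient of $T_B$. Given $g \in R$, since $\Sigma_0$ is characteristic, $g$ preserves $BN$; applying \Cref{thm:SZ} inside $BN$ allows me to modify $g$ by a suitable unipotent element into $h$ that normalizes $T_B$ inside $\GL_n(\bar\F)$ and induces the same automorphism of $T_0$ as $g$. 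Such $h$ permutes the weight spaces $\bar V_\chi$, yielding a homomorphism $R \to \Sym(X) \leq S_n$. Elements of its kernel fix every $\chi \in X$, and by faithfulness of the embedding $T_B \hookrightarrow (\bar\F^\times)^X$, their induced automorphism of $T_B$ is trivial, hence a fortiori trivial on the quotient $T_0$. This gives $[R : C_R(T_0)] \leq |\Sym(X)| \leq n!$, as required.

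The hardest part is the characteristic refinement in the first paragraph: the $\Aut(\Sigma)$-orbit of the candidate $\Sigma_0$ may be large, and naive multiplication of indices yields a bound depending on $|\Sigma|$ rather than $n$. I expect this to be handled by bounding the number of bounded-index trigonalizable subsections via their correspondence with a bounded-combinatorics structure (for example, by showing that such subsections arise from Borel subgroups of a bounded-rank algebraic hull of $\Gamma$ and are hence permuted by a finite Weyl-group-type action), or alternatively by defining $\Sigma_0$ intrinsically from the Fitting series of $\Sigma/O_p(\Sigma)$ and verifying trigonalizability directly. Once the characteristic refinement is secured, the weight-space argument for the $n!$ bound follows cleanly from the torus decomposition.
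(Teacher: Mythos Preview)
Your overall shape is right --- Mal'cev to get a bounded-index trigonalizable piece, then an eigenspace argument for the Weyl bound --- and the Weyl argument is essentially the paper's. But there are two genuine gaps in the first half.

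First, your claim that ``the preimage of $\Sigma_0$ in $\Gamma$ is the product of $B$ with a $p$-subgroup'' is false: the preimage of $O_p(\Sigma)$ in $\Gamma$ contains $N$, which is an arbitrary finite group, so the preimage of $\Sigma_0$ is certainly not $p$-by-abelian in general. This matters because, as the paper remarks just before the theorem, a section can be $p$-by-abelian as an abstract group without being the trace of any trigonalizable subgroup. So you have not shown that $\Sigma_0$ is a trigonalizable \emph{section}. The paper repairs this by a Frattini-plus-Schur--Zassenhaus construction: take a Sylow $p$-subgroup $P$ of the preimage $\Delta$ of $O_p(\Sigma)$, use the Frattini argument to get $\Gamma_1 = N_{\Gamma_1}(P) N$, split $N_{\Gamma_1}(P) = PH$ with $H$ a $p'$-group, and then apply Jordan's theorem to $H$ to extract an abelian $H_0 \csgp H$ of bounded index, so that $B = P H_0$ is an honest $p$-by-abelian subgroup whose image is the desired $\Sigma_0$.

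Second, the characteristic refinement you flag as ``the hardest part'' has a clean fix you are missing. Your proposed remedies (algebraic hulls, Fitting series) are more machinery than needed. The key observation is that $\Sigma / O_p(\Sigma)$ is a quotient of an abelian $p'$-subgroup of $\GL_n(\F)$ and is therefore generated by at most $n$ elements (\Cref{trigonalizable-subgroups-are-PT}). A boundedly generated abelian group has only $O_n(1)$ subgroups of any given index $\le O_n(1)$, so there are only $O_n(1)$ subgroups of $\Sigma$ containing $O_p(\Sigma)$ of index $[\Sigma:\Sigma_0]$; intersecting all of them gives a characteristic subgroup of bounded index. The paper uses exactly this trick, twice --- once to make the $p$-by-abelian subsection characteristic, and once more (with the same counting applied to $H$) to make the trigonalizable lift characteristic.
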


    \begin{proof}
        We will establish the following properties in order (for $\Sigma_0 \csgp \Sigma$ of index $O_n(1)$ to be determined):
        \begin{enumerate}[(1)]
            \item\label{item1} $\Sigma_0$ is $p$-by-abelian,
            \item\label{item2} $\Sigma_0 = \pi(B)$ for some $p$-by-abelian subgroup $B \leq \Gamma$,
            \item\label{item3} $B = O_p(B) T$ for some abelian $p'$-group $T$,
            \item\label{item4} if $g \in R$ then $B^g = B^\delta$ and $T^g = T^\delta$ for some $\delta \in \pi^{-1}(O_p(\Sigma))$,
            \item\label{item5} $[R : C_R(\Sigma_0 / O_p(\Sigma))] \leq n!$.
        \end{enumerate}

        By \Cref{lem:P}(1), there is a soluble subgroup $S \leq \Gamma$
        such that $\pi(S)=\Sigma$.
        By \Cref{thm:trigonalization}, there is $S_0 \nsgp S$ of index
        $O_n(1)$ which is trigonalizable.
        By \Cref{trigonalizable-subgroups-are-PT},
        $S_0 = O_p(S_0) T_0$ for some abelian $p'$-group $T_0 \leq S_0$ isomorphic to the direct product of at most $n$ cyclic groups.
        Then $\pi(S_0) = \pi(O_p(S_0)) \pi(T_0)$ is a $p$-by-abelian normal subgroup of $\Sigma$ of index $O_n(1)$.
        Since $\pi(S_0)$ is $p$-by-abelian, its $p$-core $O_p(\pi(S_0))$ is the unique Sylow $p$-subgroup of $\pi(S_0)$, so it is normal in $\Sigma$,
        so it is contained in $O_p(\Sigma)$.
        Hence the product $\Sigma_0 = O_p(\Sigma) \pi(T_0)$ is a $p$-by-abelian subgroup of index $O_n(1)$.
        In particular, since $T_0$ is $n$-generated, $\Sigma / O_p(\Sigma)$ is $O_n(1)$-generated,
        so there are only $O_n(1)$ subgroups of $\Sigma$ containing $O_p(\Sigma)$ of index $[\Sigma : \Sigma_0]$.
        Their intersection $\Sigma_1$ is characteristic and $p$-by-abelian.
        This proves \ref{item1} with $\Sigma_1$ in the role of $\Sigma_0$.

        Write $\Sigma_1 = \Gamma_1 / N$ and $O_p(\Sigma) = \Delta / N$.
        Let $P$ be a Sylow $p$-subgroup of $\Delta$.
        Then $\pi(P) = O_p(\Sigma)$, so $\Delta = P N$, and by the Frattini argument
        \[
            \Gamma_1 = N_{\Gamma_1}(P) \Delta = N_{\Gamma_1}(P) PN = N_{\Gamma_1}(P) N.
        \]
        Hence $\pi(N_{\Gamma_1}(P)) = \Sigma_1$.
        Since $\Gamma_1 / \Delta$ is a $p'$-group, $N_{\Gamma_1}(P) / P$ is a $p'$-group, so $N_{\Gamma_1}(P) = PH$ for some $p'$-group $H$ by Schur--Zassenhaus.
        Since $H$ is a $p'$-group, Jordan's theorem (see \cite{wehrfritz}*{Theorems~9.2 and 9.3}) implies that $H$ has an abelian subgroup $H_0$ of index $O_n(1)$.
        Being an abelian $p'$-subgroup of $\GL_n(\F)$, $H_0$ is $n$-generated (e.g., by \Cref{trigonalizable-subgroups-are-PT}), and $[H : H_0]$ is $O_n(1)$,
        so $H$ is $O_n(1)$-generated and has only $O_n(1)$ subgroups of index $[H : H_0]$.
        Thus, replacing $H_0$ with the intersection of these subgroups, we may assume $H_0 \csgp H$.
        The image $\pi(PH_0)$ also has index $O_n(1)$ in $\pi(PH) =\Sigma_1$.
        Let $\Sigma_2 = \Gamma_2 / N$ be the intersection of all subgroups of $\Sigma_1$ containing $O_p(\Sigma)$ of index $[\Sigma_1 : \pi(PH_0)]$.
        Then $O_p(\Sigma) \leq \Sigma_2 \csgp \Sigma_1$, $[\Sigma_1 : \Sigma_2] \leq O_n(1)$, and $\Sigma_2 \leq \pi(PH_0)$.
        Let $T = H_0 \cap \Gamma_2$ and $B = PT = PH_0 \cap \Gamma_2$.
        Then $\pi(B) = \Sigma_2$,
        so \ref{item1}, \ref{item2}, and \ref{item3} hold with $\Sigma_2$ in the role of $\Sigma_0$.

        The normalizer $R = N_L(\Sigma)$ preserves $\Gamma$, $N$, and $\Sigma$ (by definition),
        the characteristic subgroups $O_p(\Sigma) \csgp \Sigma_2 \csgp \Sigma_1 \csgp \Sigma$,
        and also their preimages $\Delta \nsgp \Gamma_2 \nsgp \Gamma_1 \nsgp \Gamma$.
        Additionally $N_R(P)$ preserves $P$ and $N_{\Gamma_1}(P) = PH$
        as well as $PH_0 \csgp PH$
        and $PT = PH_0 \cap \Gamma_2$.
        By Sylow's theorem $\Delta$ is transitive on its Sylow $p$-subgroups, so $R = N_R(P) \Delta$.
        By Schur--Zassenhaus, $P$ is transitive on its complements in $PT$,
        so $N_R(P) = N_{N_R(P)}(T) P$.
        Thus
        \[
            R = N_R(P) \Delta = N_{N_R(P)}(T) P \Delta = N_{N_R(P)}(T) \Delta.
        \]
        This proves \ref{item4}.

        Finally, let $W = R / C_R(\Sigma_2 / O_p(\Sigma))$.
        Then $W$ acts faithfully on $\Sigma_2 / O_p(\Sigma) = \Gamma_2 / \Delta$.
        By \ref{item4}, $R = N_R(T) \Delta$.
        On the other hand since $\Sigma_2 = O_p(\Sigma) \pi(T)$ we have $C_R(T) \Delta \leq C_R(\Sigma_2 / O_p(\Sigma))$.
        Hence $W$ is a section of $N_R(T) \Delta / C_R(T) \Delta$,
        which is isomorphic to a section of $N_L(T) / C_L(T)$:
        indeed, there is a natural surjective map $N_R(T)/C_R(T) \to N_R(T)\Delta / C_R(T) \Delta$ as well as a natural injective map $N_R(T) / C_R(T) \to N_L(T) / C_L(T)$.
        Hence it suffices to prove $[N_L(T) : C_L(T)] \leq n!$.

        Since $T$ is an abelian $p'$-group, it is diagonalizable over $\bar \F$ (see, e.g., \cite{wehrfritz}*{Corollaries~1.3 and 1.6}).
        Let $V_1, \dots, V_k$ be the eigenspaces.
        Then $k \leq n$ and
        $C_L(T)$ consists of those elements of $L$ which map each $V_i$ into itself.
        Elements of $N_L(T)$ must permute $V_1, \dots, V_k$.
        Thus $N_L(T) / C_L(T)$ permutes $V_1, \dots, V_k$ faithfully,
        so $[N_L(T) : C_L(T)] \leq k! \leq n!$.
        This proves \ref{item5}.
    \end{proof}

    Although the bound $[R : C_R(\Sigma_0 / O_p(\Sigma))] \leq n!$ is all we need in this paper, the following more general result is included for independent interest.
    If $\Sigma$ is a section of $L = \GL_n(\F)$ then the \emph{Weyl group} of $\Sigma$ is $W(\Sigma) = N_L(\Sigma) / C_L(\Sigma)$.

    \begin{corollary}
        If $\Sigma$ is an abelian $p'$-section of $\GL_n(\F)$ then the Weyl group $W(\Sigma)$ has order $O_n(1)$.
    \end{corollary}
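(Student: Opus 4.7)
The plan is to apply \Cref{prop:trig+} to the abelian $p'$-section $\Sigma$ and then extend the conclusion from the trigonalizable subsection to all of $\Sigma$, using abelianness to control the residual action.

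First, since $\Sigma$ is abelian and $p'$, we have $O_p(\Sigma)=1$ and $\Sigma$ is soluble, so \Cref{prop:trig+} produces a characteristic trigonalizable subsection $\Sigma_0\csgp\Sigma$ of index $m=O_n(1)$ with $[R:C_R(\Sigma_0)]\le n!$, where $R=N_L(\Sigma)$. As $\Sigma_0$ is characteristic, $R$ also acts on $\Sigma/\Sigma_0$, and this action has image of order at most $m!$ in $\Aut(\Sigma/\Sigma_0)$. Setting $R_0=C_R(\Sigma_0)\cap C_R(\Sigma/\Sigma_0)$ we obtain $[R:R_0]=O_n(1)$, and the problem reduces to bounding the image of $R_0$ in $\Aut(\Sigma)$.

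Here abelianness enters. For $r\in R_0$ the induced automorphism has the form $\phi_r(x)=x\,\delta_r(x)$ with $\delta_r(x)\in\Sigma_0$, and when $\Sigma$ is abelian $\delta_r\colon\Sigma\to\Sigma_0$ is a group homomorphism that vanishes on $\Sigma_0$, hence factors through $\Sigma/\Sigma_0$. Since $\Sigma/\Sigma_0$ has exponent dividing $m$, the map $r\mapsto\delta_r$ embeds $R_0/C_R(\Sigma)$ into $\Hom(\Sigma/\Sigma_0,\Sigma_0[m])$. From the proof of \Cref{prop:trig+}, $\Sigma_0=\pi(T)$ for an abelian $p'$-subgroup $T\le\Gamma$, which being diagonalizable over $\bar\F$ is a product of at most $n$ cyclic groups, so the same holds for $\Sigma_0$. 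This yields $|\Sigma_0[m]|\le m^n$, whence $|\Hom(\Sigma/\Sigma_0,\Sigma_0[m])|=O_n(1)$, and combined with $[R:R_0]=O_n(1)$ this bounds $|W(\Sigma)|$.

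The main subtlety is the abelianness step: it is exactly the commutativity of $\Sigma$ that makes $\delta_r$ a homomorphism rather than a twisted cocycle, converting the problem of counting automorphisms of $\Sigma$ that fix $\Sigma_0$ pointwise into the much more tractable problem of counting homomorphisms $\Sigma/\Sigma_0\to\Sigma_0$. For a general soluble section this reduction would need to be replaced by a bounded cohomological calculation, and the easy torsion bound $|\Sigma_0[m]|\le m^n$ would no longer suffice by itself.
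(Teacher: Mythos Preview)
Your proof is correct and follows essentially the same route as the paper: apply \Cref{prop:trig+} (noting $O_p(\Sigma)=1$), bound the action on $\Sigma/\Sigma_0$ by $|\Aut(\Sigma/\Sigma_0)|=O_n(1)$, and then identify the remaining piece $C_W(\Sigma_0)\cap C_W(\Sigma/\Sigma_0)$ with a subgroup of $\Hom(\Sigma/\Sigma_0,\Sigma_0)$, which is $O_n(1)$ because $\Sigma_0$ is a product of at most $n$ cyclic groups. Your version is slightly more explicit than the paper's (you spell out the cocycle-becomes-homomorphism step and the $m$-torsion bound $|\Sigma_0[m]|\le m^n$), but the argument is the same.
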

    \begin{proof}
        Let $W = W(\Sigma)$.
        By \Cref{prop:trig+}, there is a trigonalizable subgroup $\Sigma_0 \csgp \Sigma$ of index $O_n(1)$ such that $[W : C_W(\Sigma_0)] \leq n!$.
        Since $W / C_W(\Sigma / \Sigma_0)$ is isomorphic to a subgroup of $\Aut(\Sigma / \Sigma_0)$, it has order $O_n(1)$.
        Hence $W_0 = C_W(\Sigma_0) \cap C_W(\Sigma / \Sigma_0)$ has index $O_n(1)$ in $W$.
        But $W_0$ can be identified with a subgroup of $\Hom(\Sigma / \Sigma_0, \Sigma_0)$.
        Since $\Sigma_0$ is a direct product of at most $n$ cyclic groups, it follows that $W_0$ has order $O_n(1)$.
    \end{proof}

    \section{Main proof part 1: general to soluble}
    \label{sec:general-to-soluble}

        \subsection{Structure of finite linear groups}

        In this section we need a great deal of information about finite groups of Lie type. A good reference is \cite{kleidman-liebeck}*{Chapter~5}.

        Recall from the introduction that $\Lie(p)$ is the class of finite simple groups of Lie type of characteristic $p$
        and $\Lie^*(p)$ is the class of finite direct products of members of $\Lie(p)$.
        We will write $\Lie^n(p)$ for the class of direct products of at most $n$ simple groups of Lie type of characteristic $p$ and of rank at most $n$.
        %        If $p = 0$ then by definition $\Lie^*(p)$ contains only the trivial group.
        A \emph{quasisimple group} (of Lie type of characteristic $p$) is a perfect central extension of some finite simple group (of Lie type of characteristic $p$).

        \begin{lemma}
            \label{lem:lies}
            Suppose $\Sigma = \Gamma / N$ is a $\Lie^*(p)$ section of $\GL_n(\F)$, where $\F$ is a finite field of characteristic $p$.
            Then $\Sigma$ is $\Lie^n(p)$.
            More precisely, $\Sigma$ has at most $n/2$ simple factors and each has Lie rank at most $n-1$.
        \end{lemma}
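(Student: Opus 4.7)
My plan is to realise $\Sigma$ as the quotient of the perfect core of $\Gamma$, identify each simple factor $L_i$ with a quasisimple component of this perfect subgroup, and then apply standard minimum-degree bounds for representations of quasisimple groups of Lie type in defining characteristic.

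By \Cref{lem:P}(2), replace $\Gamma$ by its perfect core $P := \Gamma^{(m)}$ for some $m = O(\log n)$; since $\Sigma$ is itself perfect, $\Sigma \cong P/Z$ with $Z := P \cap N$ and $P/Z \cong L_1 \times \cdots \times L_k$. Let $E_1, \ldots, E_r$ be the components of $P$, forming the layer $E(P) = E_1 \ast \cdots \ast E_r$ as a central product. Schreier's conjecture (a consequence of CFSG) implies that $P/E(P)$ has only soluble and alternating composition factors, so, leaving aside the small alternating/Lie type coincidences where the desired bounds are trivial, every $L_i$ equals the simple quotient $E_{\sigma(i)}/Z(E_{\sigma(i)})$ of a component. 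Distinct $L_i$'s give distinct components, so $r \geq k$.

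View $V := \F^n$ as an $\F P$-module with irreducible composition factors $W_1, \ldots, W_s$ (so $\sum_j \dim W_j = n$). Each component $E$ is perfect and nontrivial, hence cannot act unipotently on $V$ and so acts nontrivially on some $W_j$. By Clifford applied to $E(P) \nsgp P$, the irreducible $E(P)$-constituents of $W_j$ decompose as tensor products over the components with compatible central characters, so the tensor factor corresponding to a nontrivially acting component $E$ has dimension at least $d(L)$, the minimum degree of a faithful projective representation of $L := E/Z(E)$ in characteristic $p$. Standard defining-characteristic tables give $d(L) \geq \opr{rank}(L) + 1$ for every $L \in \Lie(p)$.

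This yields $\opr{rank}(L_i) \leq d(L_i) - 1 \leq \dim W_{j(i)} - 1 \leq n - 1$; and since each of the $r \geq k$ components contributes a tensor factor of dimension $\geq 2$ to $\dim W_j$ for some $j$, we have $n = \sum_j \dim W_j \geq 2r \geq 2k$, whence $k \leq n/2$. The main obstacle is the clean identification of each $L_i$ with a distinct quasisimple component of $P$, which requires Schreier's conjecture to preclude Lie type composition factors appearing outside the layer and a small amount of bookkeeping for the alternating/Lie type overlaps; the minimum-degree inequality is classical and can be taken from standard tables.
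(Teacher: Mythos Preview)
The paper's own proof consists entirely of two citations: \cite{liebeck-pyber}*{Corollary~3.3} for the bound $k\le n/2$ on the number of factors, and \cite{feit-tits} together with \cite{kleidman-liebeck}*{Proposition~5.2.12} for the rank bound. Your plan is essentially an attempt to reprove those cited results from scratch, and the overall strategy --- pass to the perfect core, locate quasisimple components, and invoke defining-characteristic degree bounds --- is the right one and is indeed what underlies the references. Two steps, however, are not justified as written.

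First, the assignment of each $L_i$ to a \emph{distinct} component is incomplete. What Schreier's conjecture gives (after correcting $P/E(P)$ to $P/F^*(P)$, which is what actually embeds in an outer automorphism group) is that the nonabelian composition factors of $P$ lying above the generalized Fitting subgroup are alternating; this shows each non-alternating $L_i$ is isomorphic to \emph{some} $E_j/Z(E_j)$, but it does not produce an injection $i\mapsto\sigma(i)$, nor does it bound the number of alternating-type $L_i$'s. One must count more carefully, e.g.\ via Jordan--H\"older on $E(P)/(E(P)\cap Z)$ for the factors hit by the layer, together with a separate bound on the permutation part of $\Out(E(P)/Z(E(P)))$; this is precisely the content of the Liebeck--Pyber reference. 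Second, the inequality $\sum_j\dim W_j\ge 2r$ does not follow from ``each component contributes a factor $\ge 2$ to some $W_j$'': several components can act nontrivially on the \emph{same} $W_j$, in which case their contributions multiply rather than add. The conclusion survives because $\dim W_j\ge 2^{|I_j|}\ge 2|I_j|$ (where $I_j$ is the set of components acting nontrivially on $W_j$) and the $I_j$ cover $\{1,\dots,r\}$, but this step must be supplied. For the rank bound alone the component matching is in fact unnecessary: the Feit--Tits theorem directly gives $n\ge d(L)$ for every $\Lie(p)$ composition factor $L$ of a subgroup of $\GL_n(\F)$, and then $\opr{rank}(L)\le d(L)-1$ from \cite{kleidman-liebeck}*{Proposition~5.2.12}.
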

        \begin{proof}
            The bound on the number of factors is proved in \cite{liebeck-pyber}*{Corollary~3.3},
            while the bound on the Lie ranks follows from \cite{feit-tits} and \cite{kleidman-liebeck}*{Proposition~5.2.12}.
            Indeed, let $S$ be one of the $\Lie(p)$ factors of $\Sigma$ and let $\gamma : \Gamma \to S$ be the natural projection homomorphism. Let $H$ be a minimal subgroup of $\Gamma$ such that $\gamma(H) = S$ and let $\lambda : H \to \PGL_m(\F)$ be a nontrivial projective representation of $H$ of minimal degree. Then $m \le n$.
            By the main result of \cite{feit-tits} (see also \cite{feit-tits}*{Proposition~4.1}), $\lambda$ factorizes through $S$.
            Thus $S$ embeds in $\PGL_m(\F)$, and so \cite{kleidman-liebeck}*{Proposition~5.2.12(i)} implies that $S$ has Lie rank at most $m-1$.
        \end{proof}

        We will need the following deep theorem on the structure of finite linear groups.
        It was proved by Weisfeiler~\cite{weisfeiler} using the classification of finite simple groups,
        and later by Larsen and Pink~\cite{larsen-pink} without the classification.

        \begin{theorem}[Weisfeiler~\cite{weisfeiler}*{Theorem~1}, Larsen--Pink~\cite{larsen-pink}*{Theorem~0.2}]
            \label{thm:weisfeiler}
            Let $\F$ be a field of characteristic $p > 0$ and let $G$ be a finite subgroup of $\GL_n(\F)$.
            Then $G$ has a normal subgroup $\Gamma$ of index $O_n(1)$ containing $O_p(G)$
            such that $\Gamma / O_p(G)$ is a central extension of a member of $\Lie^*(p)$.
        \end{theorem}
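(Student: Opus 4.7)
The plan is to follow the approach of Larsen and Pink~\cite{larsen-pink}, which treats the statement as a theorem in algebraic geometry over $\bar{\F}_p$ and avoids the classification of finite simple groups. Since $G$ is finite, only finitely many field elements appear in its matrix entries, and one may freely extend $\F$; so we may assume $\F$ is algebraically closed of characteristic $p$.

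Next, consider the Zariski closure $H \leq \GL_n(\F)$ of $G$, viewed as an algebraic subgroup, and let $H^0$ be its identity component. The first ingredient is a Jordan-type bound $[H : H^0] \le O_n(1)$ (which in positive characteristic is more delicate than in characteristic zero). Setting $\Gamma = G \cap H^0$ then yields a normal subgroup of $G$ of bounded index. Since $H^0$ is connected, it has a unipotent radical $R_u(H^0) \nsgp H^0$, and the reductive quotient $H^0 / R_u(H^0)$ decomposes (up to central isogeny) as the commuting product of a central torus $T$ and simply connected almost-simple factors $S_1, \ldots, S_k$, where $k$ is bounded in terms of $n$ by representation-theoretic considerations (each $S_i$ has a nontrivial representation of dimension at least $2$ inside $\F^n$).

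The main obstacle is transferring this algebraic structure down to the finite group $\Gamma$. One must show that, up to bounded index, (i) $\Gamma \cap R_u(H^0)$ coincides with $O_p(G)$, which is straightforward since finite subgroups of unipotent algebraic groups in characteristic $p$ are $p$-groups; (ii) the image of $\Gamma$ in $T$ is a finite abelian $p'$-group, because finite subgroups of tori over $\bar{\F}_p$ have this form; and, most delicately, (iii) the image of $\Gamma$ in each almost-simple factor $S_i$ is a quasisimple group of Lie type of characteristic $p$. Step (iii) is the heart of the argument and requires a Nori--Weisfeiler-style approximation result: a Zariski-dense finite subgroup of a simply connected simple algebraic group over $\bar{\F}_p$ must, up to a subgroup of bounded index, be essentially the group of rational points over some finite subfield, hence quasisimple of Lie type. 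Assembling (i)--(iii) produces the desired central product decomposition of $\Gamma / O_p(G)$, completing the proof.
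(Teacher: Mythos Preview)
The paper does not supply its own proof of this theorem; it is quoted as a known structural result of Weisfeiler (via CFSG) and, independently, Larsen--Pink (CFSG-free), and used as a black box. So there is no in-paper argument to compare against.

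That said, your outline has a genuine gap at the very first step. Over an algebraically closed field, a finite set of points is already Zariski closed, so the Zariski closure $H$ of a finite subgroup $G \le \GL_n(\bar\F_p)$ is $G$ itself, and $H^0 = \{1\}$. Your ``Jordan-type bound'' $[H:H^0] \le O_n(1)$ would then read $|G| \le O_n(1)$, which is plainly false. The Zariski-closure heuristic you have in mind is what one does for a \emph{finitely generated, possibly infinite} linear group (as in Mal'cev-type arguments), or in characteristic zero via compact Lie groups; it does not get off the ground for a finite group in positive characteristic.

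Larsen and Pink do associate a connected algebraic group to $G$, but by a far subtler construction than the Zariski closure: roughly, they build an ``algebraic envelope'' so that the sizes of intersections of $G$ with closed subvarieties behave like the dimension theory of an honest algebraic group, and then exploit this to read off the structure of $G$. Nori's earlier approach over $\F_p$ instead generates a connected algebraic group from the one-parameter unipotent subgroups attached to the order-$p$ elements of $G$. In either case, \emph{manufacturing} the connected algebraic group is the heart of the theorem, not a preliminary step; your items (i)--(iii) are in the right spirit only once such a group has been produced, and producing it is precisely where the difficulty lies.
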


        \begin{remark}
        Unfortunately this statement is not quite explicit in either \cite{weisfeiler}*{Theorem~1} or \cite{larsen-pink}*{Theorem~0.2}.

        According to \cite{weisfeiler}*{Theorem~1}, $G / O_p(G)$ has a subgroup $\Gamma = TL$ of index $O_n(1)$ such that $T$ is an abelian $p'$-group and $L$ is a central extension of a group of $\Lie^*(p)$ type. However, it is explicit in the proof on p.~5279 (though not in the statement of the theorem) that $T$ is the centre of $\Gamma$, so $\Gamma$ itself is a central extension of a $\Lie^*(p)$ group.

        Similarly, according to \cite{larsen-pink}*{Theorem~0.2}, $G$ has normal subgroups $\Gamma_3 \le \Gamma_2 \le \Gamma_1 \le G$ such that $\Gamma_1$ has index $O_n(1)$, $\Gamma_1 / \Gamma_2 \in \Lie^*(p)$, $\Gamma_2/\Gamma_3$ is an abelian $p'$-group, and $\Gamma_3$ is a $p$-group. We may assume $\Gamma_3 = O_p(G)$ by replacing $\Gamma_i$ with $\Gamma_i O_p(G)$ for $i = 1,2,3$. Now see the definition of $\Gamma_2$ on p.~1156 and the last line of the proof of Theorem~0.2 for the fact that $\Gamma_2 / \Gamma_3 = Z(\Gamma_1 / \Gamma_3)$. (See also \cite{liebeck-pyber-weisfeiler}*{Corollary~3.1}.)

        A central extension of a $\Lie^*(p)$ group is the same as a central product of an abelian group and a set of quasisimple groups of Lie type of characteristic $p$: see \cite{aschbacher}*{(31.1)}.
        \end{remark}

        \begin{corollary}
            \label{cor:weisfeiler}
            Let $\F$ be a finite field of characteristic $p > 0$ and let $P$ be a subgroup of $\GL_n(\F)$ with $\deg_\C(P)$ sufficiently large in terms of $n$. Then
            \begin{enumerate}[(a)]
                \item $P$ is soluble-by-$\Lie^*(p)$,
                \item $N = [P, \Sol(P)]$ is a $p$-subgroup such that $[P, N] = N$,
                \item $|\Sol(P) / N| \leq (2n+1)^n$.
            \end{enumerate}
        \end{corollary}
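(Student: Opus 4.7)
The plan is to combine Weisfeiler's structure theorem with the large $\deg_\C(P)$ hypothesis to pin down the structure of $P$, then read off all three conclusions. First I would apply \Cref{thm:weisfeiler} to obtain $\Gamma \nsgp P$ with $[P:\Gamma] = O_n(1)$ and $\Gamma/O_p(P) = A * Q$ a central product of an abelian $p'$-group $A$ and a central product $Q = Q_1 * \cdots * Q_k$ of quasisimple groups of Lie type of characteristic $p$. Large $\deg_\C(P)$ forces $P = \Gamma$ (otherwise the nontrivial bounded-order quotient $P/\Gamma$ would pull back to a nontrivial complex representation of $P$ of bounded degree) and $P$ perfect (otherwise $P^{\mathrm{ab}}$ would give a nontrivial $1$-dimensional representation). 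Since $A*Q$ is then perfect with abelianization $A/(A \cap Q)$, we deduce $A \leq Z(Q)$ and hence $P/O_p(P) = Q$. Because $Q/Z(Q)$ is a direct product of nonabelian finite simple groups, $\Sol(Q) = Z(Q)$, so $\Sol(P)/O_p(P) = Z(Q)$ and $P/\Sol(P) \cong Q/Z(Q)$ lies in $\Lie^*(p)$, proving (a).

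For (b), centrality of $\Sol(P)/O_p(P) = Z(Q)$ in $P/O_p(P)$ forces $N = [P, \Sol(P)] \leq O_p(P)$, so $N$ is a $p$-subgroup. To show $[P, N] = N$, I would pass to $\bar P = P/[P,N]$; by construction $\bar N = N/[P,N]$ is central in $\bar P$. Apply the three subgroups lemma to $(\bar P, \bar P, \bar \Sigma)$ with $\bar \Sigma = \Sol(P)/[P,N]$: both $[\bar P, \bar \Sigma, \bar P]$ and $[\bar \Sigma, \bar P, \bar P]$ reduce to $[\bar N, \bar P] = 1$, whence $[[\bar P, \bar P], \bar \Sigma] = 1$. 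Since $\bar P$ inherits perfectness from $P$, $[\bar P, \bar P] = \bar P$, and so $\bar N = [\bar P, \bar \Sigma] = 1$; that is, $N = [P, N]$.

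For (c), first observe $\Sol(P)/N = Z(P/N)$: the $\subseteq$ is the definition of $N$, while the reverse holds because $P/\Sol(P)$ is a direct product of centerless simple groups. Hence $P/N$ is a perfect central extension of $P/\Sol(P) = \prod_i L_i$ with kernel $\Sol(P)/N$, and the universal property of the Schur multiplier yields $|\Sol(P)/N| \leq \prod_i |M(L_i)|$, where $M$ denotes the Schur multiplier. By \Cref{lem:lies} there are at most $n/2$ factors $L_i$, each of rank at most $n-1$, and standard bounds give $|M(L_i)| \leq (2n+1)^2$ for every such $L_i$ in $\GL_n$ (the fundamental-group contribution is $\leq r+1 \leq n$, and the finitely many exceptional Schur multipliers, such as $|M(\opr{PSL}_3(4))| = 48$, are absorbed by the generous constant $(2n+1)^2$). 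Multiplying gives $|\Sol(P)/N| \leq (2n+1)^{2 \cdot n/2} = (2n+1)^n$.

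The main obstacle is Step 2: the three subgroups argument for $[P, N] = N$ depends critically on perfection of $P$. Without it, the lemma only delivers that the derived subgroup $[P, P]$ centralizes $\Sol(P)/[P, N]$, which is the conclusion we want only when $P = [P, P]$. Quasirandomness is therefore used in two distinct ways — once to identify $P$ with the Weisfeiler subgroup $\Gamma$, and once (more delicately) to guarantee perfectness of $P$ so that the three subgroups lemma bites.
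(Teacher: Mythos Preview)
Your argument is correct and follows essentially the same route as the paper: Weisfeiler plus quasirandomness to force $P=\Gamma$ and $P$ perfect, the three-subgroups lemma (using perfectness) for $[P,N]=N$, and the Schur multiplier bound for (c).

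The one minor divergence is in the bookkeeping for (c). The paper uses the quasirandomness hypothesis a third time: large $\deg_\C(P)$ forces each simple factor $L_i$ to be large (else $P$ would have a bounded quotient), so the finitely many exceptional Schur multipliers are simply excluded and one gets the clean generic bound $|M(L_i)|\le 2n+1$ from \cite{kleidman-liebeck}, then multiplies over $t\le n$ factors. You instead try to absorb the exceptional multipliers into a uniform $(2n+1)^2$ bound and compensate by using the sharper $t\le n/2$ from \Cref{lem:lies}. Your arithmetic lands on the same $(2n+1)^n$, and the exceptional cases do in fact satisfy $|M(L_i)|\le(2n+1)^2$ given the minimal embedding dimensions, but the paper's approach is cleaner since it avoids any case check.
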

        \begin{proof}
            Apply \Cref{thm:weisfeiler} to $P$.
            Since $P$ has no proper normal subgroups of index less than $\deg_\C(P)$
            and $\deg_\C(P)$ is sufficiently large,
            it follows that $P / O_p(P)$ is a central extension of some $\Gamma \in \Lie^*(p)$.
            Hence (a) holds, and $N = [P, \Sol(P)] \leq O_p(P)$, so $N$ is a $p$-group.
            For $X$ any normal subgroup of a perfect group $P$ we have $[X, P, P] = [X, P]$ by the three subgroup lemma (see \cite{aschbacher}*{(8.9)}), so (b) holds.

            By \Cref{lem:lies}, $P$ is soluble-by-$\Lie^n(p)$.
            Suppose $P / \Sol(P) = L_1 \times \dots \times L_t$, where $t \leq n$, $L_1, \dots, L_t \in \Lie(p)$,
            and the Lie rank of $L_i$ is at most $n$ for each $i$.
            Since $P / N$ is a perfect central extension of $P / \Sol(P)$,
            \[
                |\Sol(P) / N| \leq |M(P / \Sol(P))| = \prod_{i=1}^t |M(L_i)|,
            \]
            where $M(L)$ denotes the Schur multiplier of $L$ (see \cite{aschbacher}*{Section~33 and Exercise~11.2}).
            By \cite{kleidman-liebeck}*{Theorem~5.1.4}, $|M(L_i)| \leq 2n + 1$ provided that $|L_i|$ is larger than some constant,
            which we may assume since $\deg_\C(P)$ is sufficiently large.
            Hence (c) holds.
        \end{proof}

        \begin{remark}
%            \label{rem:schur-multiplier}
            In fact $N = O_p(P)$.
            This follows from the fact that, with finitely many exceptions,
            if $\Gamma$ is a finite simple group of Lie type of characteristic $p$ then $|M(\Gamma)|$ is prime to $p$
            (see \cite{kleidman-liebeck}*{Theorem~5.1.4}).
        \end{remark}

    \subsection{Reduction to soluble-by-Lie*}
%    \label{sec:reduction-to-soluble-by-lie}

    The goal of this section is to prove the upside-down version version of \Cref{thm:main-soluble}, \Cref{thm:tyukszem-intro}.
    The following statement is slightly stronger: it asserts that we may additionally assume the $\Lie^*(p)$ part of $\Gamma$ is highly quasirandom.

    \begin{theorem}
        \label{thm:tyukszem}
        Let $\F$ be a finite field of characteristic $p > 0$.
        Let $A \subset \GL_n(\F)$ be a symmetric subset such that $|A^3| \leq K|A|$, where $K \geq 2$.
        Let $d \geq 1$.
        Then there is a normal subgroup $\Gamma \nsgp \langle A \rangle$ such that
        \begin{enumerate}[(1)]
            \item $A$ is covered by $K^{O_n(d)}$ cosets of $\Gamma$,
            \item $\Gamma$ is soluble-by-$\Lie^n(p)$
            \item $\Gamma / \Sol(\Gamma)$ is covered by $A^6$, and
            \item $\deg_\C(\Gamma / \Sol(\Gamma)) \ge K^d$.
        \end{enumerate}
    \end{theorem}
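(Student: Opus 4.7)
Let $G := \langle A \rangle$.  My plan is to isolate the $\Lie^*(p)$ quotient of $G$ via Weisfeiler's theorem, and then select a $G$-invariant subcollection of simple factors using the Product Theorem and quasirandomness.

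First, apply \Cref{thm:weisfeiler} to $G$: there is $G_0 \nsgp G$ of index $m_0 = O_n(1)$ with $M := G_0/\Sol(G_0) = L_1 \times \cdots \times L_t$, a direct product of finite simple groups of Lie type of characteristic $p$, where $t \leq n$ and each $L_i$ has Lie rank at most $n-1$ (\Cref{lem:lies}).  Write $\pi : G_0 \to M$.  The conjugation action of $G$ partitions $\{L_1, \ldots, L_t\}$ into orbits $\Omega_1, \ldots, \Omega_s$, and for each orbit set $N_j := \prod_{i \in \Omega_j} L_i$, a $G$-invariant normal subgroup of $M$.

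For each orbit let $X_j := \tr(A^2, N_j)$; by \Cref{lem:growth-in-sections}, $|X_j^3| \leq K^9 |X_j|$.  Call $\Omega_j$ \emph{good} if $\deg_\C(L_i) > K^{Cd}$ for $i \in \Omega_j$ (with $C = C(n)$ a sufficiently large constant) \emph{and} the density estimate $|X_j|^3 > |N_j|^3 / \deg_\C(N_j)$ holds.  For good orbits, \Cref{prop:quasirandomness} applied inside $N_j$ forces $X_j \cdot X_j \cdot X_j = N_j$, and since $X_j \cdot X_j \cdot X_j \subseteq \tr(A^6, N_j)$, we obtain the covering $\tr(A^6, N_j) = N_j$.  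Setting $\Gamma := \pi^{-1}\br{\prod_{j \text{ good}} N_j}$, the $G$-invariance of the good collection gives $\Gamma \nsgp G$; then $\Gamma/\Sol(\Gamma) \in \Lie^*(p)$ automatically (verifying (2)), the covering (3) is built in by construction, and (4) follows from the fact that the complex degree of a direct product of nonabelian finite simple groups equals the minimum complex degree of the factors.

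The main obstacle is the index bound (1).  Bad orbits fall into two types.  Those with $\deg_\C(L_i) \leq K^{Cd}$ satisfy $|N_j| \leq K^{O_n(d)}$ by \Cref{landazuri--seitz}, contributing $K^{O_n(d)}$ to the index.  For orbits that are quasirandom-large but density-small, i.e.\ $\deg_\C(L_i) > K^{Cd}$ while $|X_j|^3 \leq |N_j|^3 / \deg_\C(N_j)$, I would pass to the kernel $K_j := \ker(G_0 \to N_j) \nsgp G$ and combine \Cref{lem:orbit-stab} with \Cref{lem:growth-in-sections} to bound $|A K_j / K_j| \leq m_0 K^{O(1)} |X_j| \leq m_0 K^{O(1)} |N_j|/\deg_\C(N_j)^{1/3}$, which via \Cref{landazuri--seitz} should be absorbed into $K^{O_n(d)}$ once $C$ is chosen large enough.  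Executing this last step uniformly across orbits---particularly in the subcase where $|L_i|$ itself is enormous, so that the raw density bound on $|X_j|$ does not immediately translate into a $K^{O_n(d)}$ coset count---is the most delicate aspect of the proof, and may require a secondary pigeonhole (possibly replacing $N_j$ by a $G$-invariant sub-collection of factors within the orbit) to ensure that the number of cosets of the intersection $\bigcap_{\text{bad}} K_j$ meeting $A$ is controlled globally, rather than just orbit by orbit.
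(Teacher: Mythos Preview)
Your plan announces the Product Theorem but your actual argument never uses it, and this is precisely the missing idea. The dichotomy you set up is ``Gowers-dense versus not Gowers-dense'': an orbit is good if $|X_j|^3 > |N_j|^3/\deg_\C(N_j)$, bad otherwise. But as you yourself note, a bad orbit can have $|N_j|$ arbitrarily large compared to $K$, and then the bound $|X_j| \le |N_j|/\deg_\C(N_j)^{1/3}$ says nothing about the number of cosets. No pigeonhole or passage to sub-collections will repair this, because nothing in your argument so far forces $|X_j|$ to be polynomial in $K$.

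The correct dichotomy comes from the Product Theorem (\Cref{thm:PS-main}) applied factor by factor, not orbit by orbit. Take $B = A^{3m_0} \cap G_0$, which generates $G_0$ by Schreier's lemma (this step is also missing from your outline: your $X_j = \tr(A^2,N_j)$ need not generate $N_j$). Then for each simple factor $L$ the projection $\pi(B)$ generates $L$ and has bounded tripling, so \Cref{thm:PS-main} gives exactly two possibilities: $\pi(B)^3 = L$, or $|\pi(B)| \le K^{O_n(1)}$. The second alternative is the polynomial bound you need for bad factors; there is no middle case of ``large but not dense''. One then checks this dichotomy is $G$-invariant (a short growth estimate comparing $\pi(B)$ with $\pi(B^a)$ for $a\in A$), uses \Cref{lem:direct-product} to assemble the good factors into a single surjection, and finally invokes \Cref{lem:dense-subset} to push the covering down from $A^{O_n(1)}$ to $A^6$.
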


    \begin{remark}
        \Cref{thm:tyukszem} is also true when $\F$ is infinite, but we do not need this generalization.
        It is even true in characteristic zero, where $\Lie^*(0)$ is interpreted as trivial;
        in this case the theorem simply states that $A$ is covered by $K^{O_n(1)}$ cosets of a soluble normal subgroup (this was first proved in \cite{BGT-linear}).
    \end{remark}

    This result is a relatively direct consequence of \Cref{thm:weisfeiler} and \Cref{thm:PS-main} (the Product Theorem).
    Indeed, already by \Cref{thm:weisfeiler}, $G = \langle A \rangle$ is covered by $O_n(1)$ cosets of
    a soluble-by-$\Lie^*(p)$ normal subgroup $\Gamma \nsgp G$.
    To get such a subgroup $\Gamma$ for which $A^6$ covers $\Gamma / \Sol(\Gamma)$ we need one auxiliary result on $\Lie^*(p)$ groups.

%    \begin{lemma}
%        \label{lem:dense-subset}
%        Let $\Gamma$ be a $\Lie^n(p)$ normal subgroup of a group $G$.
%        Let $A$ be a symmetric subset of $G$ covered by $x$ cosets of $\Gamma$.
%        If $|A| \geq |\Gamma| / y$ then $\Gamma$ has a characteristic subgroup $\Delta$
%        contained in $A^6$ of index at most $(xy)^{O(n^2)}$.
%    \end{lemma}
%    \begin{proof}
%        Let $\Delta$ be the product of the simple factors $L$ of $\Gamma$ such that $\deg_\C(L) > (xy)^3$.
%        Then $\deg_\C(\Delta) > (xy)^3$, and $[\Gamma : \Delta] \leq (xy)^{O(n^2)}$ by \Cref{landazuri--seitz}.
%        On the other hand, by \Cref{lem:orbit-stab},
%        \[
%            |A^2 \cap \Delta| \geq |A| / (x [\Gamma : \Delta]) \geq |\Delta| / (xy).
%        \]
%        Hence by \Cref{prop:quasirandomness} we have $(A^2 \cap \Delta)^3 = \Delta$.
%    \end{proof}

    \begin{lemma}
      \label{lem:direct-product}
      Let $\Gamma \in \Lie^n(p)$,
      and let $A$ be a symmetric generating set of $\Gamma$ that projects onto all simple quotients of $\Gamma$.
      Then $A^{O_n(1)} = \Gamma$.
    \end{lemma}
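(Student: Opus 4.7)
The plan is to induct on the number of simple factors $t$ of $\Gamma = L_1 \times \cdots \times L_t$ (with $t \leq n$ and each $L_i$ simple of rank at most $n$). The base case $t = 1$ is immediate: if $\Gamma$ is simple then the hypothesis $\pi(A) = \Gamma$ literally means $A = \Gamma$.

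For the inductive step $t \geq 2$, I let $N = L_1 \times \cdots \times L_{t-1}$ be the kernel of the projection $\pi_t : \Gamma \to L_t$. Since $\pi_t(A) = L_t$ we have $AN = \Gamma$, so by Schreier's lemma (\Cref{lem:schreier}) the set $B = A^{\pm 3} \cap N$ generates $N$; moreover, each coordinate projection $\pi_i(B) \subset L_i$ for $i < t$ generates $L_i$. To invoke the inductive hypothesis on $N$ I need to produce a subset $A' \subset A^{C_1} \cap N$, with $C_1 = O_n(1)$, that generates $N$ and \emph{surjects} (not merely generates) onto each simple factor $L_i$ for $i < t$. Given such an $A'$, the inductive hypothesis applied to the $(t-1)$-factor group $N$ yields $A'^{C_2} = N$ for $C_2 = O_n(1)$, and combined with $AN = \Gamma$ this gives $A^{C_1 C_2 + 1} \supseteq A N = \Gamma$.

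The heart of the argument is the construction of $A'$, i.e.\ boosting each $\pi_i(B)$ from merely generating $L_i$ to equaling $L_i$ at bounded cost. I iterate the Product Theorem (\Cref{thm:PS-main}) within each factor $L_i$: each tripling step multiplies $\log|\pi_i(B^k)|$ by a factor of at least $1 + \eps(n)$ until a quasirandomness threshold is reached, at which point Gowers's trick (\Cref{lem:gowers-trick}) together with the Landazuri--Seitz lower bound on $\deg_\C(L_i)$ (\Cref{landazuri--seitz}) forces full surjection in $O(1)$ further steps. The crucial input is the hypothesis $\pi_i(A) = L_i$, which forces $|A| \geq \max_j |L_j| \geq |\Gamma|^{1/t} \geq |\Gamma|^{1/n}$; this lower bound on the log-size of the starting set amortizes the number of Product Theorem iterations to $O_n(1)$ rather than the naive polylogarithmic count.

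The main obstacle is precisely this amortization: the Product Theorem alone gives only polylog-in-$|L|$ diameter bounds for a simple group $L$, so getting a constant depending on $n$ alone is delicate and relies essentially on the strong hypothesis $\pi_i(A) = L_i$ to keep all intermediate projections comparable in size to the factors they live in. An alternative route that sidesteps the explicit induction is to first establish a product-theorem dichotomy for $\Gamma$ itself (either $A^3 = \Gamma$ or $|A^3| \geq |A|^{1 + \eps(n)}$) by combining \Cref{thm:PS-main} for each factor with Gowers's trick to mix across factors, and then iterate directly in $\Gamma$ using $|A| \geq |\Gamma|^{1/n}$ to conclude after $O_n(1)$ triplings.
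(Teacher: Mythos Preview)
Your inductive setup is fine, but the amortization step does not work as stated. You iterate the Product Theorem on $\pi_i(B^k)$ inside $L_i$ and claim termination in $O_n(1)$ steps because $|A|\ge|\Gamma|^{1/n}$. That inequality constrains $|A|$, not $|\pi_i(B)|$; the Schreier set $B=A^{\pm3}\cap N$ can be tiny in each coordinate. For instance, take $\Gamma=L\times L$ and $A=\{(x,x):x\in L\}\cup\{(1,h)^{\pm1}\}$ for some nontrivial $h\in L$: this is a symmetric generating set surjecting onto both factors, yet $A^2\cap(L\times 1)=\{1,(h,1),(h^{-1},1)\}$. Starting from a set of bounded size, the Product Theorem needs $\Theta(\log\log|L|)$ triplings to fill $L$, and nothing in your argument rules this out. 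The lower bound $|A|\ge|\Gamma|^{1/n}$ would be the right input for your \emph{alternative} route (iterating a dichotomy for $\Gamma$ itself), but you have not established that dichotomy, and doing so is essentially the original problem.

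The paper's proof is shorter and uses a different tool in place of the Product Theorem. Writing $\Gamma=H\times L$ with $L$ simple of Lie rank $\ell\le n$ and $A$ surjecting onto both factors, either $A$ is already a subgroup (hence $A=\Gamma$), or one picks $x\in A^2\setminus A$ and, using $\pi_H(A)=H$, some $y\in A$ with $\pi_H(y)=\pi_H(x)$; then $a=xy^{-1}$ is a nontrivial element of $A^3\cap L$. Because $\pi_L(A)=L$, conjugating $a$ by $A$ produces the full conjugacy class $a^L\subset A^5$. Now the Lawther--Liebeck covering-number bound \cite{lawther--liebeck} says every element of $L$ is a product of $m=O(\ell)$ conjugates of $a$, so $L\subset A^{5m}$ and $\Gamma=AL\subset A^{5m+1}$; induct on the number of factors.

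The idea you are missing is exactly this use of the surjectivity hypothesis: it converts one nontrivial element of $A^{O(1)}$ lying in a factor into a full conjugacy class at bounded cost. With that observation your Product Theorem route can also be repaired (a nontrivial conjugacy class in a rank-$\le n$ simple group has size $\ge|L|^{c/n}$, so the iteration really does terminate in $O_n(1)$ steps), but once the conjugacy class is in hand the covering-number bound finishes in one stroke.
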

    \begin{proof}
        By induction it suffices to prove that if $A$ generates a group of the from $H \times L$ with $L \in \Lie(p)$
        and $A$ projects onto both $H$ and $L$
        then $A^{O(\ell)} = H \times L$, where $\ell$ is the Lie rank of $L$.
        If $A$ is a subgroup we are done, since $A$ is a generating set by hypothesis.
        Otherwise let $x \in A^2 \setminus A$ and let $y \in A$ such that $x$ and $y$ have the same projection in $H$.
        Then $a = xy^{-1}$ is a nontrivial element of $A^3 \cap L$.
        By \cite{lawther--liebeck}*{Theorem~1} (and recalling that untwisted Lie rank is at most twice the Lie rank), every element of $L$ is the product of $m$ conjugates of $a$ for some $m \leq O(\ell)$.
        Since $A$ projects onto $L$, it follows that $A^{5m}$ contains $L$.
        Since $A$ projects onto $H$, $A^{5m+1} = H \times L$.
    \end{proof}

    Now we can prove \Cref{thm:tyukszem}.

    \begin{proof}[Proof of \Cref{thm:tyukszem}]
        By \Cref{thm:weisfeiler} and \Cref{lem:lies}, $G = \langle A \rangle$ has a soluble-by-$\Lie^n(p)$ normal subgroup $\Gamma$ of index $m \le O_n(1)$.
        We claim that $G$ has a normal subgroup $\Delta$ such that $\Sol(\Gamma) \leq \Delta \leq \Gamma$
        and $\Delta / \Sol(\Gamma)$ is covered by $A^6$
        and $A$ is covered by $K^{O_n(1)}$ cosets of $\Delta$.

        By \Cref{lem:schreier}, $\Gamma$ is generated by $B = A^{3m} \cap \Gamma$.
        By \Cref{lem:growth-in-sections}, $B$ has tripling $K^{O_n(1)}$.

        Write $\Ell$ for the set of simple factors of $\Gamma / \Sol(\Gamma)$.
        For $L \in \Ell$ denote by $\pi_L : \Gamma \to L$ the natural projection.
        Since $\Gamma \nsgp G$, $G$ permutes $\Ell$.
        We claim that $\Ell = \Ell_1 \cup \Ell_2$ for $G$-invariant sets $\Ell_1, \Ell_2$ such that
        \begin{align*}
            &\pi_L(B)^3 = L~\text{and}~\deg_C(L) \ge K^d && (L \in \Ell_1) \\
            &|\pi_L(B)| \leq K^{O_n(d)} && (L \in \Ell_2).
        \end{align*}
        Let $\Ell_1 \subset \Ell$ be the largest $G$-invariant set such $\pi_L(B)^3 = L$ and $\deg_C(L) \ge K^d$ for every $L \in \Ell_1$
        and let $\Ell_2 = \Ell \setminus \Ell_1$.
        It suffices to show $|\pi_L(B)| \le K^{O_n(d)}$ for all $L \in \Ell_2$.
        Suppose $L_1 \in \Ell$ satisfies $\pi_{L_1}(B)^3 \ne L_1$.
        By \Cref{lem:growth-in-sections}, $\pi_{L_1}(B) \subset L_1$ has tripling $K^{O_n(1)}$, so \Cref{thm:PS-main} implies that $|\pi_{L_1}(B)| \le K^{O_n(1)}$.
        Similarly, if $\deg_\C(L_1) < K^d$ then $|L_1| \le \deg_\C(L_1)^{O(n)} \le K^{O_n(d)}$ by \Cref{landazuri--seitz}, so certainly $|\pi_{L_1}(B)| \le K^{O_n(d)}$.
        Now for any $L \in \Ell_1$ and $a \in A$ we have, by \Cref{lem:growth-in-sections}(3),
        \begin{align*}
            |\pi_{L^a}(B)| = |\pi_{L}(B^{a^{-1}})| \le |\pi_L(A^{3m+2} \cap \Gamma)|
            &\le K^{O_n(1)} |\pi_L(A^2 \cap \Gamma)| \\
            &\le K^{O_n(1)} |\pi_L(B)|.
        \end{align*}
        Since $|\Ell| \le n$ and $G$ is generated by $A$, it follows that $|\pi_L(B)| \le K^{O_n(d)}$ for every $L$ in the $G$-orbit of $L_1$.
        Since $\Ell_2$ is the union of such $G$-orbits, the claim holds.

        Let $\Gamma_1$ be the subgroup of $\Gamma$ corresponding to the product of the factors in $\Ell_1$.
        Since the projection of $B$ to any factor of $\Gamma / \Gamma_1$ (i.e., any $L \in \Ell_2$) has size $K^{O_n(d)}$,
        $B$ is covered by $K^{O_n(d)}$ cosets of $\Gamma_1$.
        By \Cref{lem:covering-observation}, $A$ is also covered by $K^{O_n(d)}$ cosets of $\Gamma_1$.
        Since $B^3$ projects onto each factor of $\Gamma_1 / \Sol(\Gamma)$,
        $B^k$ projects onto $\Gamma_1 / \Sol(\Gamma)$ by \Cref{lem:direct-product} for some $k \le O_n(1)$.
        In particular $A^{3mk}$ covers $\Gamma_1 / \Sol(\Gamma)$.
        Applying \Cref{lem:growth-in-sections}(3) with $\Sigma = \Gamma_1 / \Sol(\Gamma)$, it follows that
        \[
            |\tr(A^2, \Sigma)| \ge |\tr(A^{3mk}, \Sigma)| / K^{3mk+3}.
        \]
        Since $\deg_\C(\Sigma) \ge K^d$, \Cref{prop:quasirandomness} implies that $\tr(A^2, \Sigma)^3 = \Sigma$,
        provided that $d \ge 3(3mk+3)$, which we may assume.
        Thus $A^6$ covers $\Gamma_1 / \Sol(\Gamma)$.
        This completes the proof of the theorem with $\Gamma_1$ in the role of $\Gamma$.
    \end{proof}

    \subsection{Covering the perfect core}
%    \label{sec:soluble-radical-of-perfect-core}

    In this section we start with the soluble-by-$\Lie^*(p)$ group $\Gamma$ furnished by \Cref{thm:tyukszem-intro}.
    By \Cref{lem:P}, $\Gamma$ has derived length $O(\log n)$.
    Let $P = \Gamma^{(\omega)} = \Gamma^{(O(\log n))}$ be the perfect core of $\Gamma$.
    Our goal is to prove that $A^{O_n(1)}$ covers $P$.
    It is easy to show that $A^{O_n(1)}$ covers $P / \Sol(P)$, by iterating the following lemma.
    The main work of this section consists of showing that $A^{O_n(1)}$ also covers $\Sol(P)$.

    \begin{lemma}
        \label{lem:Gamma'}
        Let $\Gamma$ be soluble-by-$\Lie^*(p)$.
        Let $A \subset \Gamma$ be a symmetric set covering $\Gamma / \Sol(\Gamma)$.
        Then $A^{O(1)}$ covers $\Gamma' / \Sol(\Gamma')$.
    \end{lemma}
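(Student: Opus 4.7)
The plan is to identify $\Gamma'/\Sol(\Gamma')$ canonically with $L := \Gamma/\Sol(\Gamma)$ and then lift every element of $L$ to a short word in $A$ lying in $\Gamma'$, using the (weak) Ore conjecture.

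First I would establish the isomorphism $\Gamma'/\Sol(\Gamma') \cong L$. Since $L \in \Lie^*(p)$ is a direct product of non-abelian finite simple groups, it is perfect and admits no nontrivial normal soluble subgroup (a projection of a normal soluble subgroup to any simple factor is a proper normal subgroup, hence trivial). Perfection of $L$ gives $\Gamma = \Gamma' \Sol(\Gamma)$, and the second isomorphism theorem yields $\Gamma'/(\Gamma' \cap \Sol(\Gamma)) \cong L$. On the other hand $\Sol(\Gamma')$ is characteristic in $\Gamma'$, which is characteristic in $\Gamma$, so $\Sol(\Gamma') \csgp \Gamma$; its image in $L$ is then a normal soluble subgroup of $L$ and hence trivial. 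Therefore $\Sol(\Gamma') = \Gamma' \cap \Sol(\Gamma)$ and the canonical map $\Gamma'/\Sol(\Gamma') \to L$ is an isomorphism.

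Next I would apply the weak Ore conjecture: there is an absolute constant $C$ such that every element of any finite non-abelian simple group is a product of at most $C$ commutators (the full Ore conjecture gives $C = 1$, but the weak form already suffices). Since commutators in a direct product factor coordinatewise, the same $C$ works for $L$. Combining this with the hypothesis that the image $\bar A$ of $A$ in $L$ equals $L$, any $l \in L$ can be written $l = [u_1, v_1] \cdots [u_C, v_C]$ with $u_i, v_i \in \bar A$; choosing preimages $a_i, b_i \in A$ and using symmetry of $A$, the product $[a_1, b_1] \cdots [a_C, b_C]$ lies in $A^{4C} \cap \Gamma'$ and maps to $l$ under $\Gamma'/\Sol(\Gamma') \cong L$. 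Hence $A^{4C}$ covers $\Gamma'/\Sol(\Gamma')$.

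There is no real obstacle beyond invoking the weak Ore conjecture; everything else is routine group theory. Iterating this lemma $O(\log n)$ times along the derived series (as indicated in the surrounding text) will then yield $A^{O_n(1)}$ covering $P/\Sol(P)$, where $P = \Gamma^{(O(\log n))}$ is the perfect core of $\Gamma$.
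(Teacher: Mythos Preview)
Your proof is correct and follows essentially the same approach as the paper: both establish the isomorphism $\Gamma'/\Sol(\Gamma')\cong\Gamma/\Sol(\Gamma)$ via $\Sol(\Gamma')=\Gamma'\cap\Sol(\Gamma)$, observe that commutators of elements of $A$ land in $A^4\cap\Gamma'$, and then invoke the weak Ore conjecture to write every element of $\bar\Gamma$ as a bounded product of commutators. Your version spells out the isomorphism argument in slightly more detail, but the logic is identical.
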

    \begin{proof}
        Let $\bar \Gamma = \Gamma / \Sol(\Gamma)$.
        Since $A$ covers $\Gamma / \Sol(\Gamma)$,
        the projection of $A^4 \cap \Gamma'$ to $\bar \Gamma$ contains the set of commutators $C$.
        By the weak Ore conjecture, $\bar \Gamma = C^{O(1)}$ (see \cite{wilson-pseudofinite}*{Proposition~2.4} or \cite{shalev} or \cite{nikolov-pyber}*{Theorem~3}).
        Hence $(A^4 \cap \Gamma')^{O(1)}$ covers $\Gamma / \Sol(\Gamma)$.
        Since $\Gamma' \cap \Sol(\Gamma) = \Sol(\Gamma')$, this implies that $(A^4 \cap \Gamma')^{O(1)}$ covers $\Gamma' / \Sol(\Gamma')$.
    \end{proof}

    We need the following elementary result of Rhemtulla: see \cite{rhemtulla1969commutators}*{Lemma~2}.

    \begin{lemma}[Rhemtulla]
        \label{lem:rhemtulla}
        If $\Gamma = V \rtimes G$ where $V$ is abelian and $G = \gen{x_1, \dots, x_d}$ then
        \[
            [V, G] = \{[v_1, x_1] \cdots [v_d, x_d] : v_1, \dots, v_d \in V\}.
        \]
    \end{lemma}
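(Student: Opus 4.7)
The lemma is essentially a statement about the augmentation ideal of the integral group ring $\Z G$, and is cleanest when $V$ is abelian --- which is the only case needed (see \Cref{lem:affine-conjugating-trick-intro}). I focus on this case.

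Write $V$ additively as a right $\Z G$-module via $v \cdot g := v^g$. Then the commutator $[v, g] = v^{-1} v^g$ corresponds to $v(g - 1)$, so $[V, G]$ corresponds to the subgroup $V \cdot I \subset V$, where $I = \ker(\epsilon \colon \Z G \to \Z)$ is the augmentation ideal. The right-hand side of the lemma translates to
\[
    S := V(x_1 - 1) + \cdots + V(x_d - 1),
\]
and the inclusion $S \subset V \cdot I$ is immediate.

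For the reverse inclusion, the key fact is that $I$ is generated as a \emph{left} $\Z G$-ideal by $\{x_i - 1 : 1 \leq i \leq d\}$. This is standard and follows by induction on the word length of $g \in G$ in the generators $x_i^{\pm 1}$, using the identities $xy - 1 = x(y - 1) + (x - 1)$ and $x^{-1} - 1 = -x^{-1}(x - 1)$ to express $g - 1$ as an element of $\sum_{i=1}^d \Z G \cdot (x_i - 1)$. Given this, every element of $V \cdot I$ is a finite sum of terms of the form $v \cdot r \cdot (x_i - 1) = (vr)(x_i - 1) \in V(x_i - 1)$ (using the right $\Z G$-module structure on $V$), hence lies in $S$. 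Translating back to multiplicative notation gives the lemma.

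The only nontrivial step is the left-ideal generation of $I$, which is a routine induction, so there is no serious obstacle.
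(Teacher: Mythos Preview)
Your augmentation-ideal argument is correct and is the standard route to this result. The paper does not actually prove the lemma; it simply cites \cite{rhemtulla1969commutators}*{Lemma~2}, so there is no in-text argument to compare against.

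Your restriction to abelian $V$ is not a gap but in fact a necessary correction: the statement as printed, with no hypothesis on $V$, fails for nonabelian $V$. For a quick counterexample take $G = \gen{x}$ of order $2$ acting on the free group $V = F(a,b)$ by inverting each generator. Any element of the form $c = [v,x] = v^{-1}v^x$ satisfies $c^x = c^{-1}$, but $c = [a,x][b,x] = a^{-2}b^{-2} \in [V,G]$ has $c^x = a^{2}b^{2} \neq b^{2}a^{2} = c^{-1}$, so it is not a single commutator $[v,x]$. Rhemtulla's original Lemma~2 does assume $V$ abelian, and both places the paper invokes the result (\Cref{lem:affine-conjugating-trick} and \Cref{lem:application-induction-step}) have abelian $V$, so your treatment covers exactly what is needed.
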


    \begin{lemma}[affine conjugating trick, \Cref{lem:affine-conjugating-trick-intro} restated]
        \label{lem:affine-conjugating-trick}
        Let $\Gamma = V \rtimes G$ be the semidirect product of an abelian group $V$
        and a $d$-generated finite group $G$ with $\deg_\C(G) \ge K^{21}$.
        Let $A \subset V$ be a symmetric $G$-invariant set generating $V$.
        If $|A^3| \leq K|A|$ then $A^{7d} \supset [V, G]$.
    \end{lemma}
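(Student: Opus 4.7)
Plan: By Rhemtulla's lemma (\Cref{lem:rhemtulla}), $[V,G]=\{[v_1,x_1]\cdots[v_d,x_d]:v_i\in V\}$ where $x_1,\dots,x_d$ generate $G$, and since $V$ is abelian each factor $[V,x_i]=(x_i-1)V$ is a subgroup of $V$. It therefore suffices to prove $[V,x]\subset A^7$ for every generator $x=x_i$; then $[V,G]\subset A^{7d}$ follows by multiplying one representative from each factor.

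Fix such an $x$ and pass to $\Gamma=V\rtimes G$. The identity $v^{-1}xv=(x-1)v\cdot x$ identifies the $V$-conjugacy class $x^V\subset\Gamma$ with the coset $[V,x]\cdot x$ inside $Vx$, so the target $[V,x]\subset A^7$ is equivalent to $x^V\subset A^7x$. Using the $G$-invariance of $A$, for $v\in A^k$ one has $(x-1)v=v^{-1}v^x\in A^{2k}$, so $(x-1)A^3\subset A^6$. It would therefore be enough to show $(x-1)A^3=(x-1)V$, equivalently $A^3\cdot C_V(x)=V$, i.e.\ that the image of $A^3$ in the quotient $V/C_V(x)$ is all of it.

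To produce this surjectivity I would exploit the quasirandomness of $G$. The idea is to form $B=A\cup\{x_1^{\pm1},\dots,x_d^{\pm1}\}\subset\Gamma$, symmetric and (by $G$-invariance of $A$) with tripling $|B^3|\leq O_d(K)|B|$, and apply \Cref{lem:gowers-trick} with $k=7$ to the normal subgroup $N=[V,G]\rtimes G\nsgp\Gamma$; the numerology $K^{21}=K^{3\cdot 7}$ in the hypothesis is exactly what Gowers' trick demands. The required $\deg_\C(N)>K^{21}$ comes from a Clifford-theoretic description of representations of $N$: non-trivial irreducibles of $N$ either inflate representations of $G$ (dimension $\geq\deg_\C(G)>K^{21}$) or are induced from non-trivial $G$-orbits of characters of the abelian subgroup $[V,G]$, whose size equals the index of a proper stabilizer in $G$ and therefore exceeds $\deg_\C(G)>K^{21}$. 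Exhibiting a coset of $N$ inside $B^7$ and applying Gowers produces a coset of $N$ in $B^3$; intersecting with $V$ gives a coset of $[V,G]$ inside $A^3$, and then $[V,x]\subset[V,G]\subset A^3-A^3=A^6\subset A^7$.

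The principal obstacle I anticipate is twofold. First, the coset of $N$ inside $B^7$ must actually exist, which depends on every element of $G$ being realisable by a short word in the generators $x_i^{\pm 1}$; without a diameter bound on $G$ this step may force either a more delicate choice of $B$ or a structural/inductive workaround (for instance, replacing the single normal subgroup $N$ with a sequence of subgroups tailored to the $x_i$ one at a time, which fits naturally with the Rhemtulla decomposition). Second, when $V$ is infinite the Clifford computation needs a preliminary reduction to a finite $G$-invariant quotient of $V$ that still distinguishes the elements of $A^{O(1)}$; this is possible because $A$ is finite and $V=\gen{A}$ is finitely generated abelian. Once these two points are handled, the remaining bookkeeping---tracking $V$-parts through coset statements in the layers $V\cdot x_i$ and reassembling via Rhemtulla---is routine.
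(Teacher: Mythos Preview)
Your plan correctly identifies Rhemtulla's lemma as the endgame, and your instinct to work in $\Gamma$ and exploit quasirandomness is right. But the obstacle you name is fatal for the set $B=A\cup\{x_1^{\pm1},\dots,x_d^{\pm1}\}$, and you do not resolve it. With this $B$, the $G$-component of any element of $B^k$ lies in the ball of radius $k$ in the word metric on $G$ with respect to $\{x_i^{\pm1}\}$. Hence $B^k$ can never contain a full coset of $N=[V,G]\rtimes G$ unless $k$ is at least the diameter of $G$, which is uncontrolled. For the same reason \Cref{lem:gowers-trick} cannot produce a coset of $N$ inside $B^3$: the $G$-part of $B^3$ is contained in a bounded ball and cannot equal $G$. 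So the argument as written does not go through, and the ``structural/inductive workaround'' you allude to would have to be supplied in full.

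The paper's proof avoids this by choosing $B=AG$, the union of the full $G$-cosets $aG$ for $a\in A$. Because $A$ is $G$-invariant one has $B^n=A^nG$ for all $n\ge1$, so $|B^3|/|B|=|A^3|/|A|\le K$ exactly, and crucially $G\subset B$ from the outset. One then shows by a short induction that \emph{every} $\Gamma$-conjugate of $G$ lies in $B^6$: if $G_0\subset B^6$ and $b\in B$ then $G_0^b\subset B^8$, so by \Cref{lem:growth-in-sections}(1) one has $|B^2\cap G_0^b|\ge K^{-7}|G_0^b|$, and since $\deg_\C(G_0^b)>K^{21}=(K^7)^3$, \Cref{prop:quasirandomness} gives $(B^2\cap G_0^b)^3=G_0^b\subset B^6$. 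It follows that for any $v\in V$ and $g\in G$ the commutator $[v,g]=(g^{-1})^v\cdot g$ lies in $B^6B\cap V=A^7G\cap V=A^7$. Rhemtulla then gives $[V,G]\subset A^{7d}$. No Clifford theory, no diameter bound, and no finiteness assumption on $V$ is needed.
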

    \begin{proof}
        Let $B = AG$.
        Since $B^n = A^nG$ for all $n\neq 0$, $|B^3| \leq K|B|$ and $B$ is a symmetric generating set for $\Gamma$.

        We claim that $B^6$ contains all conjugates of $G$.
        It suffices to prove if $G_0$ is a conjugate of $G$ contained in $B^6$ and $b \in B$ then $G_1 = G_0^b \subset B^6$.
        Certainly $G_1 \subset B^8$, so by \Cref{lem:growth-in-sections}(1),
        \[
            |G_1| = |B^8 \cap G_1|
            \leq K^7 |B^2 \cap G_1|.
        \]
        Hence $(B^2 \cap G_1)^3 = G_1$ by \Cref{prop:quasirandomness},
        so $B^6 \supset G_1$, as required.

        Hence if $g \in G$ and $v \in V$ the element $[v, g] = (g^{-1})^v g$ is contained in $B^6B \cap V = A^7$.
        Let $x_1, \dots, x_d$ be generators of $G$.
        Then
        \[
            A^{7d} \supset \{[v_1, x_1] \cdots [v_d, x_d] : v_1, \dots, v_d \in V\}.
        \]
        Finally,
        $\{[v_1, x_1] \cdots [v_d, x_d] : v_1, \dots, v_d \in V\} = [V, G]$
        by \Cref{lem:rhemtulla}.
    \end{proof}

    \begin{proposition}
        \label{prop:GN-prop}
        For each $d \geq 0$ there is a constant $m = m(d)$ such that the following holds.
        Let $N \leq G$ be finite normal subgroups of a group $\Gamma$ such that
        \begin{enumerate}[(i)]
            \item $[G, N] = N$,
            \item $N$ is nilpotent,
            \item $G/N$ is $d$-generated, and
            \item $\deg_\C(G) \ge K^m$,
        \end{enumerate}
        where $K \ge 2$.
        Let $A$ be a finite symmetric $K$-tripling set generating $\Gamma$ and covering $G / N$.
        \begin{enumerate}[(1)]
            \item If $G = \Gamma$ then $A^3 = G$.
            \item In general $(A^2 \cap G)^3 = G$.
        \end{enumerate}
    \end{proposition}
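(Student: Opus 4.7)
The plan is to prove Part (1) and derive Part (2) via a density argument relying on the same intermediate conclusion $N \subset A^{O_d(1)}$. For Part (1), with $G = \Gamma$, I would first show $A^{K(d)} = G$ for some constant $K(d)$ depending only on $d$, and then apply \Cref{lem:gowers-trick} (the Gowers trick) with $N = G$ to collapse down to $A^3 = G$; the required quasirandomness threshold $\deg_\C(G) > K^{3K(d)}$ fixes $m = m(d)$. Because the covering hypothesis gives $AN = G$, this reduces to the central claim $N \subset A^{K(d)-1}$.

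The proof of $N \subset A^{O_d(1)}$ proceeds by induction on the nilpotency class $c$ of $N$, using the affine conjugating trick (\Cref{lem:affine-conjugating-trick}) at each layer. \Cref{lem:schreier}, applied via $AN = G$ and $\langle A \rangle = G$, gives $\langle A^{\pm 3} \cap N \rangle = N$. For the base case $c = 1$ ($N$ abelian), I would choose lifts $x_1, \dots, x_d \in A$ of generators of $G/N$ and invoke \Cref{lem:rhemtulla} to write each element of $N = [N, G]$ as a product $[n_1, x_1] \cdots [n_d, x_d]$. To trap each commutator $[n, x_i] = (x_i^n)^{-1} x_i$ within $A^{O(1)}$, I would mimic the bootstrap in the proof of \Cref{lem:affine-conjugating-trick}: starting from $x_i \in A$, propagation via $A$-conjugation together with contraction via quasirandomness of $G$ (\Cref{prop:quasirandomness}) confines all $G$-conjugates of $x_i$ in $A^{O(1)}$, whence also the desired commutators.

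The inductive step is where the main obstacle lies. Setting $Z = \gamma_c(N)$, which is central in $N$, $G$-invariant, and abelian, the quotient $N/Z$ has class $c-1$ and satisfies $[G/Z, N/Z] = N/Z$; the induction hypothesis in $G/Z$ then gives $A^{O_d(1)} Z \supset N$. The remaining challenge is $Z \subset A^{O_d(1)}$: the abelian base case cannot be applied to $Z$ directly because $[G, Z]$ can be a proper subgroup of $Z$, as illustrated by $G = N \rtimes \SL_2(\F_p)$ with $N$ the Heisenberg $p$-group, where $[G, Z(N)] = 1$. The workaround exploits $Z \leq [N, N]$ (valid for $c \geq 2$): every element of $Z$ arises as a commutator $[n_1, n_2]$ in $N$, and writing $n_i = a_i z_i$ with $a_i \in A^{O_d(1)}$ and $z_i \in Z$, the expansion $[n_1, n_2] \equiv [a_1, a_2] \pmod{[G, Z]}$ places $[N, N]$ within $A^{O_d(1)} \cdot [G, Z]$. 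Iterating this descent along the $G$-stable chain $Z \supset [G, Z] \supset [G, [G, Z]] \supset \cdots$ terminates at a $G$-perfect subgroup (handled by the base case) or at the trivial subgroup, completing the descent.

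For Part (2), the same induction, run in the larger group $\Gamma$ using $N \nsgp \Gamma$, still yields $N \subset A^{L(d)}$ for a constant $L(d)$---Schreier-generation of $N$ is obtained after replacing $A$ by a bounded-power $A^D$ so that $A^D N = \Gamma$. Combined with $AN \supset G$, this forces $G \subset A^{L(d)+1}$, so $A^{L(d)+1} \cap G = G$; \Cref{lem:growth-in-sections}(1) applied with $H = G$ then gives $|A^2 \cap G| \geq |G|/K^{L(d)}$, and \Cref{prop:quasirandomness} with $\deg_\C(G) > K^{3L(d)}$ delivers $(A^2 \cap G)^3 = G$.
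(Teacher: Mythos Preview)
There are two genuine gaps.

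\textbf{Part (1), inductive step.} The assertion that every element of $Z=\gamma_c(N)$ is a single commutator $[n_1,n_2]$ with $n_i\in N$ is false: $Z$ is only \emph{generated} by such commutators, and the number needed is not bounded in terms of $d$. For example, let $N$ be free nilpotent of class $2$ on $2k$ generators over $\F_p$ and let $G/N=\SL_{2k}(\F_p)$ act via the standard representation on $N/[N,N]$; then $[G,N]=N$ and $G/N$ is $2$-generated, but the commutator width of $N$ is $k$. Hence $Z\subset A^{O_d(1)}\cdot[G,Z]$ does not follow and your descent stalls. (Your base-case mechanism is also not quite right: the bootstrap in the proof of \Cref{lem:affine-conjugating-trick} contracts conjugates of the quasirandom \emph{subgroup}, not of a single element $x_i$, whose $N$-conjugacy class is merely a coset of an abelian group; the base case \emph{conclusion} is salvageable, though, by applying \Cref{lem:affine-conjugating-trick} directly to the $G/N$-invariant set $(A^3\cap N)^A\subset A^5\cap N$.) The paper sidesteps the whole issue by inducting on $|N|$ rather than on the nilpotency class: at each step it quotients by a single minimal normal $V\le Z(N)$ with $[V,G]=V$, handles that $V$ via the affine conjugating trick, and invokes \Cref{lem:gowers-trick} in $G/V$ to reset the exponent to $3$, so nothing accumulates with the class of $N$.

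\textbf{Part (2).} The step ``replace $A$ by $A^D$ so that $A^D N=\Gamma$'' is unjustified: there is no hypothesis bounding $[\Gamma:G]$ (indeed $\Gamma$ need not even be finite), so no bounded $D$ achieves this, and you cannot Schreier-generate $N$ from $A^{O(1)}\cap N$. Thus the route to $N\subset A^{L(d)}$ is blocked, and with it your density argument. The paper's proof of (2) is structurally different: it sets $H=\langle A^2\cap G\rangle$ and runs a second induction on $|N|$ to prove $H=G$. From the inductive $HV=G$ (for nontrivial $A$-invariant $V\le N$) one extracts that $H$ is perfect with large $\deg_\C(H)$, applies Part~(1) to $A^2\cap G$ inside $H$ to get $H\subset A^6$, and then uses \Cref{lem:growth-in-sections}(1) to show $[H:H\cap H^a]\le K^7$ for $a\in A$, forcing $H^a=H$; hence $H\nsgp G$ and, since $G/H$ is then abelian, $H=G$.
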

    \begin{proof}
        Call $A$ \emph{hereditarily $K$-tripling} if $|(A^\pi)^3| \leq K|A^\pi|$ for every quotient $\pi : \Gamma \to \Gamma/\ker \pi$ of $\Gamma$.
        It follows from \Cref{lem:growth-in-sections}(2) that $A$ is hereditarily $K^3$-tripling.
        Hence it suffices to prove the proposition for hereditarily $K$-tripling sets.
        This observation allows us to use induction.

        We begin with (1).
        We argue by induction on $|N|$.
        If $N = 1$ then the claim is trivial because $A$ covers $G/N$ by hypothesis, so assume $N \neq 1$.
        If $V$ is a nontrivial normal subgroup of $G$ contained in $N$ then the hypotheses hold for $G / V$, so by induction $A^3 V = G$.

        Suppose we can find a nontrivial commutator $[a, b] \in Z(G) \cap N$.
        Let
        \[
            V = \langle [a, b] \rangle = \{[a^k, b] : k \in \Z\}.
        \]
        Then $V$ is normal in $G$ and contained in $N$ so by induction $G = A^3 V$.
        In particular $G = A^3 Z(G)$, so all commutators are contained in $A^{12}$.
        In particular $V \subset A^{12}$, so $G = A^{15}$.
        Hence $A^3 = G$ by \Cref{lem:gowers-trick}, assuming $m \geq 45$.

        Hence assume no nontrivial commutator is in $Z(G) \cap N$.
        Since $[G, N] = N$, $N$ is not contained in $Z(G)$.
        Let $V \leq N$ be a minimal normal subgroup of $G$ not contained in $Z(G)$.
        If $[V, G] < V$ then $[V, G] \leq Z(G) \cap N$ by minimality of $V$, but this contradicts the assumption that no nontrivial commutator is in $Z(G) \cap N$.
        Since $N$ is nilpotent, $[V, N] < V$, for otherwise $\gamma_i(N) \ge V$ for all $i$.
        Hence $[V, N] \le Z(G) \cap N$ by minimality of $V$ again, a contradiction unless $[V, N] = 1$.
        Hence $[V, G] = V$ and $V \leq Z(N)$.

        In particular $V$ is an abelian group (since $V \le Z(N)$) and the conjugation action of $G$ on $V$ factors through $G/N$ (since $[V, N] = 1$), so we may identify $V$ with a $\Z(G/N)$-module satisfying $[V, G/N] = V$.

        Next we note that $A^7$ contains an element $x \in V \setminus Z(G)$.
        If $V \cap Z(G) \neq 1$ then by induction $A^3 (V \cap Z(G)) = G$, so $A^3$ must contain an element of $V \setminus Z(G)$.
        If $V \cap Z(G) = 1$ then, since $A^3 V = G$ and $A$ generates
        $G$, $A^4$ contains two elements of some coset of $V$ and
        $A^3$ also intersects this coset, so $A^7$ contains a nontrivial element of $V$, so we are done.

        By minimality of $V$, $x$ generates $V$ as a $\Z(G/N)$-module.
        Hence $A^7 \cap V$ also generates $V$.
        Since $A$ covers $G/N$, taking the union of all $A$-conjugates of $A^7 \cap V$ produces a symmetric $G/N$-invariant generating set $B$ of $V$ contained in $A^9$, and
        \[
            \frac{|B^3|}{|B|}
            \leq \frac{|A^{27} \cap V|}{|A^7 \cap V|}
            \leq \frac{|A^{27} \cap V|}{|A^2 \cap V|}
%            \leq \frac{|A^{28}|}{|A|}
            \leq K^{26}
        \]
        by \Cref{lem:growth-in-sections}.
        Hence by \Cref{lem:affine-conjugating-trick}, $B^{7d} = [V, G/N] = V$.
        Hence $A^{63d} \supset V$ and $G = A^3 V = A^{63d+3}$.
        Again \Cref{lem:gowers-trick} implies $G = A^3$ provided $m > 3(63d + 3)$.

        Next we prove (2).
        By \Cref{lem:growth-in-sections}(1), $A^2 \cap G$ is $K^5$-tripling.
        Let $H = \langle A^2 \cap G \rangle$.
        By (1) it suffices to prove $H = G$.

        We again argue by induction on $|N|$.
        If $N = 1$ then $H = G$ because $A$ covers $G/N = G$, so  assume $N \neq 1$.
        If $V$ is a nontrivial $A$-invariant subgroup of $N$
        then the hypotheses hold for $\Gamma / V$, so by induction $HV = G$.

        Suppose $H < G$.
        Let $V = Z(G) \cap N$.
        If $V \neq 1$ then $HV = G$, so $H \nsgp G$ and $G/H$ is abelian, in contradiction to $\deg_\C(G) > 1$.
        Hence $Z(G) \cap N = 1$.

        Since $N$ is nilpotent, $Z(N) \neq 1$.
        Let $V$ be a minimal $A$-invariant subgroup of $Z(N)$ and $U = V \cap H$.
        Since $V \nsgp \Gamma$, $U \nsgp H$.
        Since $G = HV$, $U \nsgp G$.

        By the modular law, $N = N \cap HV = (N \cap H) V$.
        Hence $N = [G, N]$ implies
        \[
          (N \cap H) V = [HV, (N \cap H) V] \le [H, N \cap H] V.
        \]
        Moreover, $[H,N\cap H] \le [HV, (N \cap H) V]$ and $V \le (N\cap H)V$,
        hence
        \[
          (N \cap H) V = [H, N \cap H] V.
        \]
        Intersecting with $H$ and applying the modular law again,
        \begin{equation}
            \label{eq:NcapH1}
            N \cap H = [H, N \cap H] U.
        \end{equation}

        We may identify $V$ with a simple $\Z(\Gamma/N)$-module.
        Since $G \nsgp \Gamma$, $V$ is a semisimple $\Z(G/N)$-module by Clifford's theorem.
        Since $U \nsgp G$, $U$ is a $\Z(G/N)$-submodule and hence also semisimple.
        If $S$ is any simple submodule of $U$ then $[G/N, S]$ is a submodule of $S$,
        and it cannot be trivial since $Z(G) \cap N = 1$,
        so $S = [G/N, S]$.
        Hence also $U = [G/N, U]$.
        Since $H$ covers $G/N$, $U = [H, U] \leq [H, N \cap H]$.

        Hence, from \eqref{eq:NcapH1},
        \[
            N \cap H = [H, N \cap H] \leq [H, H].
        \]
        Since $H / (N \cap H) \cong G / N$ and $G$ is perfect (since $\deg_\C(G) > 1$), $H$ is perfect.
        By \Cref{lem:degrees} it follows that $\deg_\C(H) \ge K^{cm}$ (recall that $K \ge 2$).
        Hence by (1) applied to $A^2 \cap G$ and $H$ it follows that $H = (A^2 \cap G)^3 \subset A^6$.

        Since $\deg_\C(H) \ge K^{cm}$, every proper subgroup of $H$ has index at least $K^{cm}$.
        If $a \in A$, $H^a \subset A^8$.
        On the other hand $A^2 \cap H^a = A^2\cap G\cap H^a \subset H \cap H^a$.
        Hence
        \[
            [H : H \cap H^a] = \frac{|H^a|}{|H \cap H^a|}
            \leq \frac{|A^8 \cap H^a|}{|A^2 \cap H^a|}
%            \leq \frac{|A^9|}{|A|}
            \leq K^7
        \]
        by \Cref{lem:growth-in-sections}.
        It follows that $H = H^a$.
        Hence $H \nsgp G$.
        Since $G = H Z(N)$, $G/H$ is abelian and perfect, hence trivial.
    \end{proof}

    We are now ready to prove \Cref{thm:main-soluble}.

    \begin{proof}[Proof of \Cref{thm:main-soluble}]
        By \Cref{prop:main-soluble-finitization} we may assume $\F$ is a finite field.
        By \Cref{thm:tyukszem}, there is $\Gamma \nsgp \langle A \rangle$ such that
        $A$ is covered by $K^{O_n(1)}$ cosets of $\Gamma$,
        $\Gamma / \Sol(\Gamma)$ is covered by $A^6$, and
        $\Gamma / \Sol(\Gamma) \in \Lie^n(p)$,
        and $\deg_\C(\Gamma / \Sol(\Gamma)) > K^d$,
        where $d = d(n)$ is sufficiently large for the rest of the argument.

        By \Cref{lem:P}, $\Gamma$ has derived length $O(\log n)$.
        Let $P = \Gamma^{(O(\log n))}$ be the perfect core.
        By iterating \Cref{lem:Gamma'},  $A^{O_n(1)}$ covers $P / \Sol(P)$.
        Note that $P / \Sol(P) \cong \Gamma / \Sol(\Gamma)$ since $\Gamma / \Sol(\Gamma)$ is perfect.

        By \Cref{lem:degrees}, $\deg_\C(P) \geq c K^{d/2}$.
        By \Cref{cor:weisfeiler}, there is a normal $p$-subgroup $N \nsgp P$ contained in $\Sol(P)$ such that $[P, N] = N$ and $|\Sol(P)/N| \leq (2n+1)^n$.
        Since $A^{O_n(1)}$ covers $P / \Sol(P)$, it projects to a subset of $P / N$ of size at least $|P / \Sol(P)| \geq (2n+1)^{-n} |P / N|$.
        Assuming $\deg_\C(P)$ is larger than $(2n+1)^{3n}$, this implies that $A^{O_n(1)}$ covers $P / N$ by \Cref{prop:quasirandomness}.
        By the fact that every finite simple group is $2$-generated, $P / \Sol(P)$ is $2n$-generated,
        so $P / N$ is $O_n(1)$-generated.
        Applying \Cref{prop:GN-prop} to the section $P/N$, it follows that $A^{O_n(1)}$ covers $P$.
        Finally, \Cref{lem:gowers-trick} implies that $A^3$ contains a coset of $P$, and the proof is complete.
    \end{proof}

    \section{Main proof part 2: soluble to nilpotent}
    \label{sec:soluble-to-nilpotent}

    \subsection{Pivoting}
    \label{sec:pivoting}

    The workhorse of the rest of the proof is a pivoting argument due to Gill and Helfgott related to sum-product theory.
    Let $T$ be an abelian group acting on a nontrivial group $U$ by automorphisms.
    We use multiplicative notation in $T$ and $U$ and exponential notation $(t, u) \mapsto u^t$ for the action of $T$ on $U$.
    For $W \subset U$ and $X \subset T$ we write $W^X$ for $\{w^x : w \in W, x \in X\}$.
    Let $F = F(T, U) \subset T$ be the set of $t \in T$ having a fixed point in $U \setminus \{1\}$.

    \begin{proposition}[\cite{gill-helfgott}*{Proposition~2.11}]
        \label{prop:pivoting}
        Let $X \subset T$ and $W \subset U$. Then either
        \begin{equation}
            \label{eq:GH-growth}
            |(W^{X^{\pm 2}})^{\pm 6}| \geq \frac12 \frac{|W||X|}{|X^{-1} X \cap F|}
        \end{equation}
        or
        \begin{equation}
            \label{eq:GH-stability}
            (W^X)^{\pm 8} = \langle W^{\langle X\rangle}\rangle.
        \end{equation}
    \end{proposition}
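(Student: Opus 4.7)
The plan is to prove Proposition~7.1 via a sum-product style energy dichotomy applied to the conjugation map $\phi\colon W\times X\to U$, $\phi(w,x)=w^x$. First I would introduce the additive energy
\[
  E \;=\; \bigl|\bigl\{(w_1,x_1,w_2,x_2)\in W^2\times X^2 : w_1^{x_1}=w_2^{x_2}\bigr\}\bigr|,
\]
which by Cauchy--Schwarz satisfies $|W^X|\ge |W|^2|X|^2/E$. Split $E=E_{=}+E_{\neq}$ according to whether $w_1=w_2$: in the diagonal part, $w_1^{x_1}=w_1^{x_2}$ forces $x_1x_2^{-1}$ to fix $w_1$, so when $w_1\ne 1$ one has $x_1x_2^{-1}\in XX^{-1}\cap F$. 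Counting representations gives a bound of the form $E_{=}\le 2|W|\cdot|X|\cdot|XX^{-1}\cap F|$ (after absorbing a negligible contribution from $w_1=1$).

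Next I would dichotomize on $E$. If $E\le 2|W|\cdot|X|\cdot|XX^{-1}\cap F|$, Cauchy--Schwarz yields $|W^X|\ge \tfrac12|W||X|/|XX^{-1}\cap F|$, and since $W^X\subseteq (W^{X^{\pm 2}})^{\pm 6}$, this gives the first alternative. Otherwise $E_{\neq}\ge E/2$ dominates: there are many tuples with $w_1\ne w_2$ and $w_2=w_1^{x_1x_2^{-1}}$, which by double counting forces $|W^t\cap W|$ to be large for many $t\in XX^{-1}$. In other words, $W$ is approximately invariant under the action of $XX^{-1}$.

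In this structural case, I would exploit the approximate $XX^{-1}$-invariance of $W$ together with the abelianness of $T$ to show that $(W^X)^{\pm 8}$ is both closed under multiplication and invariant under conjugation by $X$ (hence, since $T$ is abelian, by all of $\langle X\rangle$). Each swap identity $w_1^{x_1}=w_2^{x_2}$ allows one to rewrite a conjugate $w^{xx'}\in W^{X^2}$ as a bounded-length product of elements of $W^X$ and their inverses, and standard Pl\"unnecke--Ruzsa style estimates inside $U$ force the word lengths to stabilize by step~$8$. Therefore $(W^X)^{\pm 8}\supseteq W^{\langle X\rangle}$, and being closed under products it contains $\langle W^{\langle X\rangle}\rangle$; the reverse inclusion is trivial.

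The main obstacle is this final structural step: verifying both closure under multiplication and $X$-invariance of $(W^X)^{\pm 8}$ requires careful combinatorial bookkeeping of the swap identities to trade one representation $w^x$ for another $w'^{x'}$ while staying inside $(W^X)^{\pm 8}$. Pinning down the exact exponents $6$ and $8$ relies on Pl\"unnecke--Ruzsa style manipulations inside the ambient, not necessarily abelian, group $U$, where the usual additive-combinatorial tools must be used with care.
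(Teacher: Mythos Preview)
The paper does not itself prove this proposition; it is quoted from \cite{gill-helfgott}, with a remark correcting a factor of $1/2$. The benchmark is therefore the Gill--Helfgott pivoting argument (following \cite{helfgott-SL3}*{Proposition~3.1}), not anything written out in this paper.

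Your growth branch is fine in outline: bounding the diagonal energy $E_=$ by $O(|W|\,|X|\,|XX^{-1}\cap F|)$ (since $w^{x_1}=w^{x_2}$ with $w\ne1$ forces $x_1x_2^{-1}\in F$) and applying Cauchy--Schwarz yields a lower bound on $|W^X|$ of the required shape. The structural branch, however, has a genuine gap. Large off-diagonal energy $E_{\neq}$ is a \emph{statistical} statement --- many $t\in XX^{-1}$ satisfy $|W^t\cap W|$ large --- whereas \eqref{eq:GH-stability} is an \emph{exact} equality asserting that the fixed word set $(W^X)^{\pm8}$ is literally the subgroup $\langle W^{\langle X\rangle}\rangle$. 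Pl\"unnecke--Ruzsa machinery converts approximate invariance into covering by boundedly many translates, never into exact closure under multiplication or under conjugation by all of $\langle X\rangle$; your assertion that ``word lengths stabilize by step~$8$'' is precisely the content to be proved and is not delivered by any energy inequality. In particular, nothing in your sketch upgrades ``$W$ is approximately invariant under a dense subset of $XX^{-1}$'' to ``$(W^X)^{\pm8}$ is invariant under every element of $\langle X\rangle$''.

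The Gill--Helfgott proof runs on a different dichotomy: one calls $\xi\in U$ a \emph{pivot} if $(w,x)\mapsto(w\xi)^x$ is injective on $W\times X$, and splits according to whether a pivot exists in a specific bounded word set. If one exists there, its injective image lies in $(W^{X^{\pm2}})^{\pm O(1)}$ and has the size required for \eqref{eq:GH-growth}. If none exists, then \emph{every} element of that set satisfies the non-pivot identity --- a universal algebraic condition, not an averaged one --- and an escape argument then forces $(W^X)^{\pm8}$ to already equal $\langle W^{\langle X\rangle}\rangle$. The exactness of \eqref{eq:GH-stability} is purchased by dichotomizing on the existence of a single pivot rather than on a threshold for an averaged quantity; this is the idea your energy approach is missing.
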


    \begin{remark}
        The factor of 1/2 in \eqref{eq:GH-growth} does not appear in \cite{gill-helfgott}*{Proposition~2.11},
        but \cite{gill-helfgott}*{(2.5)} appears to be unjustified
        (the argument given shows only $|Y|^2 |W|^2 \geq | \langle \langle X \rangle (\langle W\rangle)\rangle|$).
        Omitting this inequality, the rest of the proof is only impacted by a factor of 2, arguing as in \cite{helfgott-SL3}*{Proposition~3.1}.

        The statement \cite{gill-helfgott}*{Proposition~2.11} also assumes that $T$ acts faithfully on $U$, but the proof does not use this hypothesis.
    \end{remark}

    The idea of the rest of this section is to apply \Cref{prop:pivoting} in appropriate trigonalizable sections of $\GL_n(\F)$, or equivalently quotients of trigonalizable subgroups. The material is somewhat technical and the reader is encouraged to keep in mind the case of trigonalizable subgroups as a representative case. However, to prove our main theorem, the more general case of trigonalizable sections seems to be necessary.

    Let $\Sigma = \Gamma / N$ be a trigonalizable section of $\GL_n(\F)$, where $\F$ is a finite field of characteristic $p$.
    By definition this means that $\Gamma = BN$ for some trigonalizable subgroup $B$.
    \Cref{trigonalizable-subgroups-are-PT} implies that $B$ and hence $\Sigma$ is $p$-by-abelian.
    By Schur--Zassenhaus we therefore have a semidirect decomposition $\Sigma = UT$ where $U = O_p(\Sigma)$ and $T$ is some abelian $p'$-group.
    Note that $\Sigma$ acts naturally on $U$ by conjugation.
    Let $V$ be a $\Sigma$-composition factor of $U$ and let $K = C_\Sigma(V)$.
    Then $U$ acts trivially on $V$ (as in the proof of \Cref{trigonalizable-subgroups-are-PT}), so $U \le K$ and $\Sigma / K \cong T / T \cap K$ is an abelian $p'$-group.
    We call $K$ a \emph{root kernel} and the corresponding homomorphism $\chi : \Sigma \to \Sigma / K$ a \emph{root}.
    The set of nontrivial roots of $\Sigma$ is denoted $\roots^*(\Sigma)$.

    \begin{lemma}
        \label{lem:n^2-roots}
        \leavevmode
        \begin{enumerate}[(1)]
            \item If $K$ is a root kernel then $\Sigma / K$ is isomorphic to a subgroup of $\F^\times$. Thus we may assume roots take values in $\F^\times$.
            \item If $V$ is a $\Sigma$-composition factor of $U$ and $\chi : \Sigma \to \F^\times$ is the corresponding root then $V \cong \F_p(\chi(T))$, the subfield of $\F$ generated by the image of $\chi$ with the action $v^g = \chi(g) v$.
            \item $|\Phi^*(\Sigma)| < n^2$.
        \end{enumerate}
    \end{lemma}
    \begin{proof}
        By hypothesis $\Gamma = BN$ for some trigonalizable subgroup $B \leq \GL_n(\F)$.
        By replacing $\F$ with an extension and $\Sigma$ with a conjugate we may assume $B \leq B_n(\F)$.
        Then $B$ acts on $U_n(\F)$, $O_p(B) \le U_n(\F)$, and $U \cong O_p(B) / O_p(B) \cap N$.
        By Jordan--H\"older, the $\Sigma$-composition factors (equivalently, $B$-composition factors) of $U$ appear among the $B$-composition factors of $U_n(\F)$.

        Consider the following $B$-invariant series for $U_n(\F)$.
        Let $U_d$ be the subgroup of all $g \in U_n(\F)$ such that $g_{ij} = 0$ for $0 < j - i < d$.
        Then $U_d / U_{d+1}$ is a direct sum of copies $V_{ij}$ of $\F$, for $j - i = d$, where $B$ acts on $V_{ij}$ according to $v^g = \chi_{ij}(g) v$, where $\chi_{ij}(g) = g_{ii} / g_{jj}$.
        Note that $V_{ij}$ is a direct sum of isomorphic copies of the irreducible $B$-module $\F_p(\chi_{ij}(B))$,
        and $C_B(v) = \ker \chi_{ij}$ for every nonzero $v \in V_{ij}$.

        Thus if $V$ is a $\Sigma$-composition factor of $U$ then $V \cong \F_p(\chi_{ij}(B))$ for some $i, j$.
        If $K$ is the corresponding root kernel then $K = \ker \chi_{ij} N / N$ (and $B \cap N \le \ker \chi_{ij}$, since $B \cap N$ must act trivially).
        Since there are fewer than $n^2$ possibility for $(i, j)$, this proves the three claims.
    \end{proof}

    \begin{lemma}
        Let $V$ be a $T$-invariant section of $U = O_p(\Sigma)$ such that $C_V(T) = 1$.
        Then
        \[
            F(T, V) \subset \bigcup_{\chi \in \roots^*(\Sigma)}  \ker \chi.
        \]
    \end{lemma}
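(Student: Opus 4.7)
The first claim, that the action of $\Sigma$ on $V$ factors through $T$, is immediate from $[V,U]=1$: since $\Sigma=UT$ and $U$ acts trivially on $V$, the action descends to $\Sigma/U\cong T$ (the isomorphism being genuine because $U$ is a $p$-group and $T$ a $p'$-group, so $U\cap T=1$).

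For the second claim, my plan is to reduce to the behaviour of $T$ on a $\Sigma$-composition series of $V$. Write $V=V_1'/V_2'$ for $\Sigma$-invariant $V_2'\nsgp V_1'\le U$, and choose a $\Sigma$-composition series
\[
V_2'=W_0\subset W_1\subset\cdots\subset W_k=V_1'
\]
refining $V_2'\subset V_1'$ inside $U$. By Jordan--H\"older the factors $W_i/W_{i-1}$ are $\Sigma$-composition factors of $U$, so each centralizer $K_i=C_\Sigma(W_i/W_{i-1})$ is by definition a root kernel with corresponding root $\chi_i:\Sigma\to\Sigma/K_i$. Moreover each $W_i/W_{i-1}$ is an elementary abelian $p$-group (characteristically simple subgroup of a $p$-group), hence an irreducible $\F_p[\Sigma]$-module, and $U$ acts trivially by hypothesis, so it is an irreducible $\F_p[T]$-module. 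Since $T$ is abelian, Schur's lemma identifies the action with scalar multiplication by $\chi_i(t)$ in a finite field $\F_{p^{a_i}}^\times$, the endomorphism ring of this simple module.

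Next I want to show each root $\chi_i$ is nontrivial. If some $\chi_i$ were trivial, $T$ would act trivially on $W_i/W_{i-1}$, so $C_{W_i/W_{i-1}}(T)=W_i/W_{i-1}\neq 1$; by coprime action (Lemma~\ref{lem:U-decomp} applied inductively to the series, using that fixed points lift through $T$-invariant quotients of $p$-groups under a $p'$-action), this would force $C_V(T)\neq 1$, contradicting the hypothesis. Hence $\chi_i\in\roots^*(\Sigma)$ for every~$i$.

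Finally, given $t\in F(T,V)$, pick $v\in V\setminus\{1\}$ with $v^t=v$, let $i$ be minimal with (the preimage of) $v$ in $W_i$, and let $\bar v$ be its nonzero image in $W_i/W_{i-1}$. Then $\bar v$ is a nonzero vector in the irreducible $\F_p[T]$-module $W_i/W_{i-1}$ fixed by $t$, which acts as the scalar $\chi_i(t)$; so $\chi_i(t)=1$, i.e.\ $t\in\ker\chi_i$. This puts $t$ in $\bigcup_{\chi\in\roots^*(\Sigma)}\ker\chi$, as required. The only mildly subtle step is the coprime-action lifting of fixed points used to rule out trivial $\chi_i$; everything else is a routine application of Jordan--H\"older and Schur.
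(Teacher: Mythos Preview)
Your proof is correct. Both your argument and the paper's reduce to the action of $t$ on a simple $\F_pT$-subquotient of $V$, and both need that $C_V(T)=1$ forces every such simple factor to carry a nontrivial $T$-action (so the associated root lies in $\roots^*(\Sigma)$). The difference is only in how this reduction is organized. You fix a $\Sigma$-composition series of $V$ at the outset, use the coprime-action lifting of fixed points to show every factor has nontrivial root, and then locate the fixed vector $v$ in the lowest layer it meets. The paper instead works with $v$ directly: it replaces $V$ first by $\langle v^T\rangle$, then by its Frattini quotient (now elementary abelian, with $C_V(T)=1$ preserved since $V=[V,T]$), and finally projects via Maschke to a simple summand where the image of $v$ is nonzero. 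Your route avoids the Frattini step and the explicit appeal to Maschke, trading them for Jordan--H\"older and the fixed-point lifting lemma; the paper's route keeps the focus on the single element $v$ throughout. Neither buys anything substantive over the other here.
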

    \begin{proof}
        Suppose $t \in F(T, V)$, so there is some nontrivial $v \in V$ such that $v^t = v$.
        By replacing $V$ with $\langle v^T\rangle$ we may assume $V = \langle v^T\rangle$.
        Since $V$ is nontrivial, $V \ne \frat(V)$, so $v^T \not\subset \frat(V)$, which implies that $v \notin \frat(V)$.
        By replacing $V$ with $V / \frat(V)$ we may therefore assume $\frat(V)$ is trivial
        (by \Cref{lem:U-decomp}, $V = [V, T]$, so the quotient has the same property, so the condition $C_V(T) = 1$ is preserved).
        Hence $V$ is elementary abelian and we may identify it with an $\F_pT$-module.
        By Maschke's theorem, $V$ is completely reducible.
        By projecting to one of the irreducible components we may assume $V$ is irreducible.
        Note then $[V, U] = 1$, so $V$ is a $\Sigma$-composition factor of $U$.
        Now $C_V(t)$ is a nontrivial submodule of $V$, so $C_V(t) = V$, so $t$ is contained in the root kernel $K = C_\Sigma(V)$, and $K \neq \Sigma$ because $C_V(T) = 1$.
        Hence $t$ is contained in a nontrivial root kernel.
    \end{proof}

    \pagebreak[2]
    \begin{proposition}
        \label{section-pivoting}
        Let $V$ be a $\Sigma$-invariant section of $U = O_p(\Sigma)$ such that $[V, U] = C_V(T) = 1$.
        Let $A \subset \Sigma$ be a symmetric $K$-tripling subset such that
        $\langle A^\pi \rangle = T$, where $\pi : \Sigma \to T$ is the natural projection.
        Then either
        \begin{enumerate}[(a)]
            \item there is some $\chi \in \roots^*(\Sigma)$ such that $1 < |\chi(A)| \leq
            2n^2 K^{O(1)}$, or
            \item $A^{O(1)}$ covers $\langle (A^2 \cap V)^T \rangle$.
        \end{enumerate}
    \end{proposition}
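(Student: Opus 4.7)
The plan is to apply \Cref{prop:pivoting} to the action of $T$ on $V$, taking $W = \tr(A^2, V) \subset V$ and $X = A^\pi \subset T$; this is valid because $[V, U] = 1$ ensures the conjugation action of $\Sigma$ on $V$ factors through $T$. The strategy is then to translate each of the two alternatives back to a statement about traces of powers of $A$.

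If \eqref{eq:GH-stability} holds, the right-hand side equals $\langle (A^2 \cap V)^T \rangle$ by the hypothesis $\langle X\rangle = T$. Writing the section as $V = V_1/V_0$ and lifting $w \in W$ to some $v \in A^2 \cap V_1$ and $x \in X$ to some $a \in A$, the element $w^x$ is the image in $V$ of $a^{-1}va \in A^4 \cap V_1$. Hence $W^X \subset \tr(A^4, V)$ and $(W^X)^{\pm 8} \subset \tr(A^{32}, V)$, which yields (b).

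If \eqref{eq:GH-growth} holds, the same type of computation gives $(W^{X^{\pm 2}})^{\pm 6} \subset \tr(A^{O(1)}, V)$, whose size is at most $K^{O(1)}|W|$ by \Cref{lem:growth-in-sections}(3). Substituting into \eqref{eq:GH-growth} forces $|XX^{-1} \cap F| \geq |X|/K^{O(1)}$. Applying the preceding lemma $F \subset \bigcup_{\chi \in \roots^*(\Sigma)} \ker\chi$ together with the bound $|\roots^*(\Sigma)| \leq n^2$ from \Cref{lem:n^2-roots}, a pigeonhole argument yields a nontrivial root $\chi$ satisfying $|XX^{-1} \cap \ker\chi| \geq |X|/(2n^2 K^{O(1)})$.

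The remaining step, which I expect to be the main obstacle, is to turn this lower bound on $|XX^{-1}\cap\ker\chi|$ into the desired upper bound $|\chi(A)| = |\chi(X)| \leq 2n^2 K^{O(1)}$. A naive application of orbit--stabilizer to $X$ only produces a \emph{lower} bound on $|\chi(X)|$, which is useless; the trick is to apply \Cref{lem:orbit-stab}(1) to the pair $(XX^{-1}, XX^{-1})$ with subgroup $\ker\chi$, yielding $|\chi(XX^{-1})| \cdot |XX^{-1} \cap \ker\chi| \leq |X^4|$. The Pl\"unnecke-type inequality \Cref{lem:growth-in-sections}(2) bounds $|X^4| \leq K^{O(1)}|X|$, whence $|\chi(XX^{-1})| \leq 2n^2 K^{O(1)}$. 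Since $T$ is abelian, $|\chi(X)| \leq |\chi(X)\chi(X)^{-1}| = |\chi(XX^{-1})|$, giving the upper bound on $|\chi(A)|$. The strict inequality $|\chi(A)| > 1$ follows because the nontriviality of $\chi$ on $T = \langle X\rangle$ forces $\chi(X)$ to generate a nontrivial subgroup of $\chi(T)$.
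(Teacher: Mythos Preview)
Your proof is correct and uses exactly the same ingredients as the paper: the pivoting proposition applied with $W=\tr(A^2,V)$ and $X=A^\pi$, the inclusion $F\subset\bigcup_{\chi\in\Phi^*(\Sigma)}\ker\chi$ from the preceding lemma, the bound $|\Phi^*(\Sigma)|\le n^2$, orbit--stabilizer, and growth in sections. The stability branch is handled identically.

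The only difference is the order of operations in the growth branch. You run the argument forward: \eqref{eq:GH-growth} combined with growth in sections forces $|X^2\cap F|\ge |X|/K^{O(1)}$, pigeonhole gives some $\chi$ with $|X^2\cap\ker\chi|\ge |X|/(2n^2K^{O(1)})$, and then you must invert this via \Cref{lem:orbit-stab}(1) to bound $|\chi(X^2)|$ (the step you flagged as the main obstacle). The paper runs it backward: it first assumes $|\chi(A)|\ge R$ for every $\chi\in\Phi^*(\Sigma)$, applies \Cref{lem:orbit-stab}(1) in the forward direction to get $|X^2\cap\ker\chi|\le R^{-1}K^3|X|$ for each $\chi$, sums to get $|X^2\cap F|\le n^2K^3R^{-1}|X|$, and then substitutes this into \eqref{eq:GH-growth} to conclude $R\le 2n^2K^{O(1)}$. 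This ordering makes the ``inversion trick'' unnecessary, so the paper's version is a line shorter, but the two arguments are logically equivalent rearrangements of the same inequalities.
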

    \begin{proof}
        Assume that $|\chi(A)| \geq R$ for every $\chi \in \roots^*(\Sigma)$. Then by \Cref{lem:orbit-stab} and \Cref{lem:growth-in-sections}(2),
        \[
            |(A^\pi)^2 \cap \ker \chi| \leq R^{-1} |(A^\pi)^3| \leq R^{-1} K^3 |A^\pi|.
        \]
        Hence by the previous two lemmas
        \[
            |(A^\pi)^2 \cap F(T, V)| \leq n^2 K^3 R^{-1} |A^\pi|.
        \]
        Since $[V, U] = 1$, the action of $\Sigma$ on $V$ factors through $\pi : \Sigma \to T$.
        Apply \Cref{prop:pivoting} with $W = A^2 \cap V$ and $X = A^\pi$.
        If \eqref{eq:GH-growth} holds then
        \[
            |((A^2 \cap V)^{A^2})^6|
            \geq \frac12 \frac{| A^2 \cap V| |A^\pi|}{|(A^\pi)^2 \cap F(T, V)|}
            \geq \frac12 n^{-2} K^{-3} R |A^2 \cap V|,
        \]
        while
        \[
            |((A^2 \cap V)^{A^2})^6|
            \leq |A^{36} \cap V|
            \leq K^{35} |A^2 \cap V|
        \]
        by \Cref{lem:growth-in-sections}(1).
        This implies (a).
        On the other hand if \eqref{eq:GH-stability} holds then
        \[
            \langle (A^2 \cap V)^T\rangle = ((A^2 \cap V)^A)^8 \subset A^{32},
        \]
        which implies (b).
    \end{proof}

    \subsection{Growth of bilinear images}
%    \label{sec:growth-bilinear}

    In this section we consider abelian groups only, so we use additive notation.

    We briefly recall the definition of the Fourier transform on a finite abelian group $G$.
    The dual group $\hat G$ is the group of all homomorphisms $\chi : G \to S^1$.
    We endow $G$ with the counting measure and $\hat G$ with the uniform measure.
    The Fourier transform of a function $f : G \to \C$ is then defined by
    \[
        \hat f(\chi) = \sum_{x \in G} f(x) \chi(-x) \qquad (\chi \in \hat G),
    \]
    and the Fourier inversion formula is
    \[
        f(x) = \frac1{|G|}\sum_{\chi \in \hat G} \hat f(\chi) \chi(x) \qquad (x \in G).
    \]
    Parseval's identity is
    \[
        \sum_G |f|^2 = \frac1{|G|} \sum_{\hat G} |\hat f|^2.
    \]

    The convolution of two functions $f_1, f_2 : G \to \C$ is defined by
    \[
        f_1 * f_2(x) = \sum_{y \in G} f_1(y) f_2(x-y).
    \]
    With this definition we have the rule
    \[
        \hat {f_1 * f_2} = \hat {f_1} \hat{f_2}.
    \]

    In the following lemma, by a \emph{probability measure} on $G$ we simply mean a function $\mu\colon G \to [0, 1]$ such that $\sum_{x \in G} \mu(x) = 1$.

    \begin{lemma}
        Let $G$ be a finite abelian group,
        and let $\mu$ be a probability measure on $G$ such that, for all $\chi \in \hat G$, either $|\hat\mu(\chi)| \leq 1/R$ or $\hat\mu(\chi) = 1$.
        Let $S$ be the support of $\mu$, and let $A \subset G$.
        Then
        \[
            |A + S| > \frac12 \min(R^2 |A|, |\langle S \rangle|).
        \]
        Moreover, if $S \subset A \subset \langle S\rangle$ and $|2A| \leq \frac12 R^2 |A|$, then $4A = \langle S \rangle$.
    \end{lemma}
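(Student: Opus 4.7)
The plan is to use standard Fourier analysis. Set up Cauchy--Schwarz on the convolution $1_A * \mu$: since this function is supported on $A+S$ and satisfies $\|1_A*\mu\|_1 = |A|$ (because $\mu$ is a probability measure), one has
\[
|A|^2 \leq |A+S| \cdot \|1_A*\mu\|_2^2,
\]
reducing the problem to an upper bound for $\|1_A*\mu\|_2^2$.

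By Plancherel, $\|1_A*\mu\|_2^2 = \mathbb{E}_{\chi \in \hat G} |\widehat{1_A}(\chi)|^2 |\hat\mu(\chi)|^2$. The characters with $\hat\mu(\chi) = 1$ must be trivial on $S$ (since $\mu\ge 0$, $\sum\mu = 1$, and $|\chi| = 1$ force equality), so they form exactly the annihilator $H$ of $\langle S\rangle$, a subgroup of $\hat G$ of size $|G|/|\langle S\rangle|$. Using $|\hat\mu|^2 \leq 1$ on $H$, $|\hat\mu|^2 \leq R^{-2}$ elsewhere, Parseval for the complementary sum, and orthogonality of characters on the subgroup $\langle S\rangle$, one gets
\[
\|1_A*\mu\|_2^2 \leq \frac{1}{|\langle S\rangle|} \sum_{g \in G/\langle S\rangle} |A \cap g|^2 + \frac{|A|}{R^2} \leq \min\Bigl(|A|,\, \tfrac{|A|^2}{|\langle S\rangle|}\Bigr) + \frac{|A|}{R^2},
\]
the final inequality being the trivial estimates $|A \cap g| \leq |A|$ and $|A \cap g| \leq |\langle S\rangle|$. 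A short case split on whether $|A|$ exceeds $|\langle S\rangle|$ then yields
\[
|A+S| \geq \frac{|A|^2}{\|1_A*\mu\|_2^2} > \tfrac12 \min\bigl(R^2|A|,\, |\langle S\rangle|\bigr),
\]
which is the main inequality.

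For the \emph{in particular} part, assume $S \subset A \subset \langle S\rangle$ and $|2A| \leq \tfrac12 R^2|A|$. Since $A+S \subset 2A$, we have $|A+S| \leq \tfrac12 R^2|A|$, which rules out $R^2|A|$ as the smaller term in the minimum; hence $|A+S| > \tfrac12 |\langle S\rangle|$, and in particular $|2A| > \tfrac12 |\langle S\rangle|$ with $2A \subset \langle S\rangle$. Then for any $x \in \langle S\rangle$ the two subsets $2A$ and $x - 2A$ of $\langle S\rangle$ have combined size strictly exceeding $|\langle S\rangle|$, so they intersect, placing $x$ in $2A + 2A = 4A$. Thus $4A = \langle S\rangle$.

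The one step that requires a little care is keeping both bounds on the $H$-contribution alive simultaneously: the bound $|A|^2/|\langle S\rangle|$ is the decisive one when $|A| \leq |\langle S\rangle|$ and produces the $R^2|A|$ term of the minimum, while the bound $|A|$ handles the regime $|A| > |\langle S\rangle|$ and produces the $|\langle S\rangle|$ term. Taking both in parallel is what gives the clean minimum in the conclusion.
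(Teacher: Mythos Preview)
Your proof is correct and follows essentially the same Fourier-analytic approach as the paper: Cauchy--Schwarz on $1_A*\mu$, Plancherel, and splitting the dual sum over the annihilator of $\langle S\rangle$. The paper is slightly more economical---it bounds the annihilator contribution directly by $|A|^2/|\langle S\rangle|$ (via $|\hat{1_A}(\chi)|\le|A|$) without your sharper $\min$ and case split, and it secures the strict inequality by noting that the complementary Parseval bound drops the trivial character (you should make your displayed $\leq$ a $<$ for the same reason, else the edge case $|\langle S\rangle|=R^2|A|$ loses strictness).
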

    \begin{proof}
        The convolution $1_A * \mu$ is supported on $A+S$, so, by Cauchy--Schwarz and Parseval's identity,
        \[
            |A|^2
            = (\sum_G 1_A * \mu)^2
            \leq |A + S| \sum_G |1_A * \mu|^2
            = |A + S| \frac1{|G|} \sum_{\hat G} |\hat{1_A}|^2 |\hat\mu|^2.
        \]
        Let $H = \langle S \rangle$, and note $\hat\mu(\chi) = 1$ if and only if $\chi \in H^\perp = \{\psi \in \hat G : \psi(H) = 1\}$.
        Hence, using Parseval's identity again,
        \begin{align*}
            \frac1{|G|} \sum_{\hat G} |\hat{1_A}|^2 |\hat\mu|^2
            &\leq \frac{|H^\perp|}{|G|} |A|^2 + R^{-2} \frac1{|G|} \sum_{\hat G \setminus H^\perp} |\hat {1_A}|^2 \\
            &< \frac{|A|^2}{|H|} + R^{-2} |A|.
        \end{align*}
        Rearranging,
        \[
            |A + S| > \frac{|A|}{|A| / |H| + R^{-2}}
            \geq \frac12 \min(|H|, R^2|A|).
        \]

        For the last statement, if $|A+S| \leq \frac12 R^2 |A|$ then $|A+S| > \frac12 |H|$, so $4A \supset 2(A+S) = H$ by a standard exercise.
    \end{proof}

    \begin{proposition}
    \label{prop:bilinear}
    Let $U, V, Z$ be finite abelian groups,
    let $T$ be a group acting on $U$ and $V$,
    and let $\beta : U \times V \to Z$ be a bilinear map which is $T$-invariant in the sense that
    \[
        \beta(u^t, v^t) = \beta(u, v) \qquad (u \in U, v \in V, t \in T).
    \]
    Suppose that the $T$-simple composition factors of $U$ have size at least $R$.
    Write $\beta(U, V)$ for the image of $U \times V$ and let $W = \langle \beta(U, V)\rangle$.
    Then for any $A \subset Z$ we have
    \[
        |A + \beta(U, V)| > \frac12 \min(R^2 |A|, |W|).
    \]
    Moreover, if $\beta(U, V) \subset A \subset W$
    and $|2A| \leq \frac12 R^2 |A|$ then $4A=W$.
    \end{proposition}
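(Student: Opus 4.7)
The plan is to reduce this to the preceding lemma by exhibiting a suitable probability measure on $Z$. Let $\mu$ be the pushforward of the uniform measure on $U \times V$ under $\beta$; that is, $\mu(z) = |\beta^{-1}(z)|/(|U||V|)$. Then $\mu$ is a probability measure whose support is exactly $\beta(U,V)$, so the subgroup generated by its support is $W$. Both conclusions of the proposition will follow from the preceding lemma (applied to $\mu$ and $A$) once I verify the Fourier dichotomy: for each character $\chi \in \hat Z$, either $\hat\mu(\chi) = 1$ or $|\hat\mu(\chi)| \le 1/R$.

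Unwinding the definition, $\hat\mu(\chi) = \E_{u \in U,\, v \in V}\chi(\beta(u,v))$. By bilinearity, for each fixed $u$ the function $v \mapsto \chi(\beta(u,v))$ is a character of $V$; averaging over $v$ produces $1$ if that character is trivial and $0$ otherwise. Hence
\[
    \hat\mu(\chi) = \frac{|U'|}{|U|}, \qquad U' = \{u \in U : \chi(\beta(u,v)) = 1 \text{ for all } v \in V\},
\]
which is in particular a nonnegative real. If $U' = U$ then $\chi$ vanishes on $\beta(U,V)$, hence on $W$, so $\hat\mu(\chi) = 1$, matching one case of the dichotomy.

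The main step is to handle the remaining case $U' < U$ by showing $|U/U'| \ge R$. For this I would first verify that $U'$ is a $T$-invariant subgroup of $U$: the subgroup property is immediate from bilinearity, and $T$-invariance uses the $T$-invariance of $\beta$, which gives $\beta(u^t, v) = \beta(u, v^{t^{-1}})$, so $u \in U'$ implies $\chi(\beta(u^t, v)) = 1$ for all $v$, i.e., $u^t \in U'$. Then $U/U'$ is a nontrivial finite abelian $T$-module, so its $T$-composition factors are among those of $U$ and hence of size at least $R$ by hypothesis; therefore $|U/U'| \ge R$, as needed. The `in particular' conclusion requires no extra work, since the hypotheses $\beta(U,V) \subset A \subset W$ and $|2A| \le \tfrac{1}{2}R^2|A|$ match exactly the hypotheses of the preceding lemma's `in particular' clause. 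The $T$-invariance of $U'$ is the only non-routine step; everything else is an immediate appeal to the preceding lemma.
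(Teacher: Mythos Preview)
Your proof is correct and essentially identical to the paper's own argument: both take $\mu$ to be the pushforward of the uniform measure on $U\times V$, compute $\hat\mu(\chi)=[U:U_\chi]^{-1}$ with $U_\chi$ your $U'$, observe that $U_\chi$ is a $T$-invariant subgroup so its index is either $1$ or at least $R$, and then invoke the preceding lemma. You give a bit more detail on the $T$-invariance step, but the approach is the same.
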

    \begin{proof}
        Let $\mu$ be the pushforward of the uniform measure on $U \times V$,
        so that
        \[
            \mu(z) = \frac{\#\{(u, v) \in U \times V : \beta(u, v) = z\}}{|U| |V|}.
        \]
        Then for $\chi \in \hat Z$,
        \begin{align*}
            \hat \mu(\chi)
            = \sum_{z \in Z} \mu(z) \chi(z)
            = \frac1{|U||V|} \sum_{u \in U, v \in V} \chi(\beta(u, v))
            = [U : U_\chi]^{-1},
        \end{align*}
        where $U_\chi$ is the subgroup of all $u \in U$ such that $\chi(\beta(u, V)) = 1$.
        Since $U_\chi$ is $T$-invariant, either $U_\chi = U$ or $[U:U_\chi] \geq R$, so the lemma applies.
    \end{proof}

    \subsection{The no-small-roots case}
    \label{sec:no-small-roots}

    Suppose $\Sigma = \Gamma / N$ is a trigonalizable section of $\GL_n(\F)$.
    Recall from \Cref{sec:pivoting} that $\roots^*(\Sigma)$ is the set of nontrivial roots of $\Sigma$.
    Call $\chi \in \roots^*(\Sigma)$ an \emph{$R$-small root} for $A \subset \Sigma$ if $1 < |\chi(A)| \leq R$.
    In this section we establish the trigonal no-small-roots case of \Cref{thm:main}.

    \begin{proposition}
        \label{prop:nsr-case}
        There is a constant $C_n$ such that the following holds.
        Let $\F$ be a finite field.
        Let $\Sigma = \Gamma / N$ be a trigonalizable section of $\GL_n(\F)$.
        Let $A \subset \Sigma$ be a nonempty symmetric set containing $1$ such that
        \begin{enumerate}[(i)]
            \item $|A^3| \leq K|A|$,
            \item $A$ has no $K^{C_n}$-small roots,
            \item $A$ generates $\Sigma$.
        \end{enumerate}
        Then $\gamma_n(\Sigma) \subset A^{C_n}$.
    \end{proposition}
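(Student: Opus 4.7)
The plan is to cover $\gamma_n(\Sigma)$ by $A^{O_n(1)}$ via descent through the lower central series, using pivoting (Proposition \ref{section-pivoting}) and bilinear growth (Proposition \ref{prop:bilinear}) as the two main engines. First I would record some reductions. By Lemma \ref{trigonalizable-subgroups-are-PT}, after passing to an extension field and conjugating we may write $\Sigma = UT$ with $U = O_p(\Sigma)$ of nilpotency class at most $n-1$ and $T$ abelian $p'$, and then Lemma \ref{lem:gamma_omega} identifies $\gamma_n(\Sigma) = [U,T]$. Because $A$ generates $\Sigma$, for each nontrivial root $\chi$ the image $\chi(A)$ generates the nontrivial abelian $p'$-quotient $\chi(\Sigma)$, so $|\chi(A)|>1$ and hypothesis (ii) sharpens to $|\chi(A)|>K^{C_n}$. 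Combined with Lemma \ref{lem:n^2-roots}, this means that whenever Proposition \ref{section-pivoting} is invoked with $C_n$ chosen sufficiently large, case (a) is impossible, and we always land in case (b).

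The main induction is on the nilpotency class $c$ of $H=\gamma_n(\Sigma)=[U,T]$. The case $c=0$ is trivial. For $c\ge 1$, let $Z=\gamma_c(H)$, an abelian $\Sigma$-invariant subgroup of $Z(H)\cap [U,T]$. I would pass to the section $\Sigma/Z$, which inherits trigonalizability, the tripling bound via Lemma \ref{lem:growth-in-sections}, the generation hypothesis, and the no-small-roots condition: since $Z$ is a $p$-group, any root of $\Sigma$ has $Z$ in its kernel, so roots of $\Sigma/Z$ are precisely those roots of $\Sigma$ not supported in $Z$ and their images of $A$ have unchanged cardinality. The image of $H$ in $\Sigma/Z$ has class $c-1$, so by induction $\gamma_n(\Sigma)\subset A^{O_n(1)}Z$, and it remains to prove $Z\subset A^{O_n(1)}$.

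To cover $Z$, filter it by a $\Sigma$-invariant series and handle one simple factor $V$ at a time. Lemma \ref{lem:CHT} applied to $[U,T]$ shows that $T$ has no fixed vectors on $[U,T]/[U,T]'$, and a similar commutator identity argument propagates this through the iterated descent to give $C_V(T)=1$ for every $V$ actually encountered. The hypotheses of Proposition \ref{section-pivoting} are then met, and the no-small-roots bound forces case (b), yielding $\langle(A^2\cap V)^T\rangle\subset A^{O(1)}$. To upgrade this to $V\subset A^{O_n(1)}$, I would invoke Proposition \ref{prop:bilinear} on a $T$-invariant bilinear map derived from the commutator bracket on $U$, restricted to pairs of $T$-eigenspaces whose characters cancel in the image: the simple composition factors of the domain have size exceeding $K^{C_n}$ by the no-small-roots bound, and the tripling hypothesis together with Lemma \ref{lem:growth-in-sections} forces the doubling of $A\cap V$ to be bounded, so the growth dichotomy in Proposition \ref{prop:bilinear} yields $(A\cap V)^4 = V$.

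The main obstacle is the technical interplay of these three ingredients through $c\le n$ rounds of descent. One must check that the no-small-roots condition transfers cleanly to $\Sigma/Z$ at each step; one must construct, inside the abstract commutator structure on $U$, a bilinear map that is genuinely $T$-invariant in the strict sense required by Proposition \ref{prop:bilinear} rather than merely $T$-equivariant; and one must track the accumulating exponents of $A$ so that the final power depends only on $n$. The constant $C_n$ must be chosen to dominate the constants appearing in Proposition \ref{section-pivoting}, Proposition \ref{prop:bilinear}, and the relevant lemmas of Section \ref{sec:toolbox}, compounded through the $n$ iterations of descent.
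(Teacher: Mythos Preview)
Your overall architecture---induction on the nilpotency class of $H=[U,T]$, with pivoting and bilinear growth as the two engines---matches the paper. But the division of labor between the two engines is misidentified, and this creates a genuine gap.

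The claim that ``a similar commutator identity argument propagates'' Lemma~\ref{lem:CHT} to give $C_V(T)=1$ for every simple factor $V$ of $Z=\gamma_c(H)$ is false. The Heisenberg example in \Cref{sec:toolbox} (with $U=U_3(\F_p)$ and $T=\{\opr{diag}(1,a,1)\}$) already has $H=[U,T]=U$ and $H'=Z(U)$ with $T$ acting trivially on $H'$, so $C_{H'}(T)=H'\neq 1$. In general the deeper terms of the lower central series of $H$ can and do contain $T$-trivial composition factors, and on those factors Proposition~\ref{section-pivoting} is simply inapplicable (its hypothesis $C_V(T)=1$ fails). Consequently your pipeline ``pivot, then upgrade with bilinear growth'' breaks down: the bilinear lemma is not an upgrade step after pivoting but the \emph{primary} tool for the $T$-trivial case. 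In the paper the split is: first peel off $W=[V,T]$, where $C_W(T)=1$ holds and pivoting (iterated through $[W,U,\dots,U]$) applies; then, once one may assume $[V,T]=1$, invoke Proposition~\ref{prop:bilinear} with the commutator map $\beta:H/H'\times\gamma_{k-1}(H)/\gamma_k(H)\to V$, whose $T$-invariance $\beta(x^t,y^t)=[x,y]^t=[x,y]$ comes precisely from $T$ centralizing $V$, and whose domain factor $H/H'$ has large $T$-simple pieces by Lemma~\ref{lem:CHT} and the no-small-roots hypothesis. Your description (``pairs of $T$-eigenspaces whose characters cancel'') gestures at this but does not set it up correctly.

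There is a second, smaller gap. Even when $C_V(T)=1$, Proposition~\ref{section-pivoting} only gives $\langle (A^2\cap V)^T\rangle\subset A^{O(1)}$, and you need to know this subgroup is all of $V$. The paper secures this via a genuine descent: it replaces $\Sigma$ by $\Sigma_1=Z\,C_U(T)\,T$ and $A$ by $A_1=A^3\cap\Sigma_1$, checks $A_1$ still generates and has no small roots (using $A^\pi\subset A_1^\pi$), and then observes $Z=\gamma_n(\Sigma_1)$ so that iterated commutators of $A_1$ land in $Z$ and normally generate it. Your proposal does not supply this step.
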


    By \Cref{trigonalizable-subgroups-are-PT}, $\Sigma$ is $p$-by-abelian and we have a decomposition $\Sigma = UT$, where $T$ is an abelian $p'$-group and $U = O_p(\Sigma)$, and moreover $\gamma_n(U) = 1$.
    Let $\pi : \Sigma \to T$ be the natural projection.
    Let $H = [U, T]$.
    By \Cref{lem:U-decomp,lem:gamma_omega}, $U = H C_U(T)$, $H = [H,T]$, and $H = \gamma_\omega(\Sigma) = \gamma_n(\Sigma)$.
    We will prove that $H \subset A^{O_n(1)}$.

    Several times in the proof we will replace $A$ by a small power. This operation is justified by \Cref{lem:tripling}.

    \subsubsection{Central case}

    First we will prove that if $Z$ is a normal subgroup of $\Sigma$
    contained in $H$ such that $[Z, U] = C_Z(T) = 1$ and $A$ covers $H/Z$ then $A^{O_n(1)}$ covers $Z$.

    We use the idea of ``descent'' from \cite{gill-helfgott}.
    Let $\Sigma_1 = Z C_U(T) T$.
    Let $A_1 = A^3 \cap \Sigma_1$.
    Then we claim
    \begin{enumerate}[(1)]
        \item $A_1$ generates $\Sigma_1$,
        \item $A_1$ has no $K^{C_n}$-small roots.
    \end{enumerate}
    Since $A$ covers $H/Z$ we have $\Sigma = A \Sigma_1$,
    so \Cref{lem:schreier} implies (1).
    Let $A_H = A \cap H$. Then $A_H$ also covers $H/Z$, so $\Sigma = A_H \Sigma_1$ for the same reason.
    In particular $A \subset A_H \Sigma_1$,
    so $A \subset A_H(A_H A \cap \Sigma_1)$.
    Applying $\pi$, $A^\pi \subset (A_H A \cap \Sigma_1)^\pi \subset A_1^\pi$.
    Hence (2) holds.

    By \Cref{lem:U-decomp,lem:gamma_omega}, $Z = [Z, T] \times C_Z(T) = [Z, T]$ and $Z = \gamma_n(\Sigma_1)$.
    The $n$-fold commutators $[a_1, \dots, a_n]$ with $a_1, \dots, a_n \in A_1$ all lie in $C = A_1^{2^{n+1}} \cap Z$, and they normally generate $Z = \gamma_n(\Sigma_1)$ since $A_1$ generates $\Sigma_1$.
    Since $Z$ is central in $U$, $C^{\Sigma_1} = C^T$, so $\langle C^T\rangle = Z$.
    Now we can apply \Cref{section-pivoting} and we find that $Z \subset A_1^{O_n(1)} \subset A^{O_n(1)}$, as claimed.

    \subsubsection{Abelian case}

    Now suppose $V$ is an abelian normal subgroup of $\Sigma$ contained in $H$ such that $C_V(T) = 1$,
    and assume that $A$ covers $H/V$.
    We claim that $A^{O_n(1)}$ covers $H$.
    Suppose $Z \leq V$ and $Z \nsgp \Sigma$ and $A^m$ covers $H / Z$.
    Then $C_Z(T) \leq C_V(T) = 1$, so by the central case $A^{O_n(m)}$ covers $Z / [Z, U]$,
    so $A^{O_n(m)}$ covers $H / [Z, U]$.
    Iterating this $n-1$ times starting with $Z=V$ and using
    \[
        [V, \underbrace{U, \dots, U}_{n-1}] \leq \gamma_n(U) = 1,
    \]
    it follows that $A^{O_n(1)}$ covers $H$.

    \subsubsection{General case}

    Finally consider $H$ itself.
    Let $k \geq 1$ be minimal such that $\gamma_{k+1}(H) = 1$.
    Then $V = \gamma_k(H)$ is a characteristic central subgroup of $H$.
    By induction on $k$ we may assume $A^m$ covers $H / V$ for some $m \leq O_n(1)$.
    Replacing $A$ with $A^m$, we may assume $A$ covers $H / V$.
    We claim that $V \subset A^{O_n(1)}$.

    Let $W = [V, T]$.
    Since $W$ is centralized by $H$ and normalized by $C_U(T)$ and $T$,
    we have $W \nsgp \Sigma$.
    By \Cref{lem:U-decomp}, $[W, T] = W$ and $C_W(T) = 1$.
    Hence $A^{O_n(1)}$ covers $W$ by the abelian case.

    Thus we may assume $[V, T] = 1$.
    Since $H = [H, T]$, either $H$ is trivial or $k \geq 2$.
    If $k \geq 2$ the commutator induces a well-defined map
    \[
        \beta : H / \gamma_2(H) \times \gamma_{k-1}(H) / \gamma_k(H) \to V
    \]
    which is bilinear (see \cite{aschbacher}*{(8.5.4)}) and whose image generates $V$.
    Moreover, $\beta$ is $T$-invariant in the sense of \Cref{prop:bilinear},
    because $[x^t, y^t] = [x, y]^t = [x, y]$ for $x \in H$ and $y \in \gamma_{k-1}(H)$.
    Since $H \subset A\gamma_k(H)$,
    the image of $\beta$ is contained in the set $B = A^4 \cap V$,
    so $B$ generates $V$.
    On the other hand, $B$ has tripling at most $K^{11}$ by \Cref{lem:growth-in-sections}(1).
    By \Cref{lem:CHT}, $C_{H / \gamma_2(H)}(T) = 1$.
    By \Cref{lem:n^2-roots}, any $T$-simple composition factor of $H / \gamma_2(H)$ is isomorphic to a $T$-module of the form $\F_p(\chi(T))$ for some root $\chi : T \to \F$.
    Since $A$ has no $K^{C_n}$-small-roots, $H / \gamma_2(H)$ has no $T$-simple composition factor of size less than $K^{C_n}$.
    Hence, by \Cref{prop:bilinear}, $B^4 = V$, so $V \subset A^{16}$, as claimed.

    This completes the induction on $k$, and we have proved that $H \subset A^{O_n(1)}$. The proof of \Cref{prop:nsr-case} is then complete provided that $C_n$ is at least as large as this implicit constant.

    \subsection{Groups with a good section}
    \label{sec:good-section}

    The hypotheses \emph{(i)--(iv)} of the following proposition are guaranteed by \Cref{thm:main-soluble} and \Cref{prop:trig+}.
    The hypothesis \emph{(v)} on the other hand is mildly restrictive, and will be removed in the next (and final) section.

    \begin{proposition}
        \label{prop:good-section-case}
        Let $\F$ be a finite field.
        Let $A \subset \GL_n(\F)$ be a finite, nonempty, symmetric, $K$-tripling set, and let $G = \langle A\rangle$.
        Assume there is a section $\Sigma = \Gamma / P$ where $P \csgp \Gamma \nsgp G$ such that
        \begin{enumerate}[(i)]
            \item $|A \Gamma / \Gamma| \leq K^{O_n(1)}$,
            \item $\Gamma / P$ is soluble,
            \item $P \subset A^6$,
            \item $\Sigma = \tr(B, \Sigma)$ for some trigonalizable subgroup $B \leq \Gamma$,
            \item $G$ acts trivially on $\Sigma / O_p(\Sigma)$.
        \end{enumerate}
        Then there is a normal subgroup $\Delta \nsgp G$ such that $P \leq \Delta \leq \Gamma$ and
        \begin{enumerate}[(1)]
            \item $|A\Delta / \Delta| \le K^{O_n(1)}$, and
            \item $\gamma_n(\Delta) \subset A^{O_n(1)}$.
        \end{enumerate}
    \end{proposition}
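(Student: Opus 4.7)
My plan is to follow the outline in \Cref{sec:outline-paper}: first pigeonhole the section $\Sigma$ to a subsection where the trace of $A^2 \cap \Gamma$ has no small roots, then apply \Cref{prop:nsr-case} to the trace of $A^4 \cap \Gamma$, and finally compare the subsections generated by $A^2 \cap \Gamma$ and $A^4 \cap \Gamma$ to extract the normal subgroup $\Delta$. Throughout, write $\bar A_k = \tr(A^k, \Sigma)$ and $H_k = \gen{A^k \cap \Gamma}$.

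The first phase is to iteratively kill small roots of $\bar A_2$. If some $\chi \in \roots^*(\Sigma)$ is a $K^{C_n}$-small root for $\bar A_2$, I would replace $\Sigma$ with the quotient $\Sigma / \ker\chi$, i.e.\ replace $P$ with $\tilde\chi^{-1}(1) \nsgp \Gamma$ (this is $G$-normal because $\ker\chi \csgp \Sigma$ and $P \nsgp G$). Because each root kernel contains $O_p(\Sigma)$, the new section remains trigonalizable and soluble, $G$ still acts trivially on the new $\Sigma/O_p(\Sigma)$ (a quotient of the old one), and (i),(iii) are preserved up to a bounded loss. By \Cref{lem:n^2-roots} this procedure halts after at most $n^2$ steps, producing a section $\Sigma^\sharp = \Gamma / P^\sharp$ with $P \leq P^\sharp$ satisfying (i)--(v) in which $\bar A_2$ has no $K^{C_n}$-small roots.

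Next let $\Sigma_1, \Sigma_2 \leq \Sigma^\sharp$ be the subgroups generated by $\bar A_2, \bar A_4$ respectively (so $\Sigma_1 \leq \Sigma_2$). Since $\bar A_2^2 \subset \bar A_4$ and since the growth estimates in \Cref{lem:growth-in-sections} control $|\chi(\bar A_4)|$ by $K^{O(1)}|\chi(\bar A_2)|$ for any root $\chi$, the set $\bar A_4$ has no $K^{C_n/2}$-small roots in $\Sigma_2$; it is symmetric, $K^{O(1)}$-tripling, and generates $\Sigma_2$, so \Cref{prop:nsr-case} yields $\gamma_n(\Sigma_2) \subset \bar A_{O_n(1)}$. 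The crucial comparison $\gamma_n(\Sigma_1) = \gamma_n(\Sigma_2)$ is then proved using no-small-roots: both $\bar A_2$ and $\bar A_4$ have identical image (trivial or full-root) under every $\chi \in \roots^*(\Sigma^\sharp)$, so $\Sigma_1$ and $\Sigma_2$ have the same image in the $p'$-quotient $\Sigma^\sharp / O_p(\Sigma^\sharp)$ (using \Cref{trigonalizable-subgroups-are-PT}), and combined with $\gamma_n = \gamma_\omega = [U,T]$ of \Cref{lem:gamma_omega} this forces the nilpotent residuals to coincide.

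Finally I would define $\Delta \leq \Gamma$ to be the preimage of $\gamma_n(\Sigma_1)$ under the map $\Gamma \to \Sigma^\sharp$. The containment relations $(A^2 \cap \Gamma)^a \subset A^4 \cap \Gamma$ for $a \in A$ give $\Sigma_1^a \leq \Sigma_2$, hence $\gamma_n(\Sigma_1)^a \leq \gamma_n(\Sigma_2) = \gamma_n(\Sigma_1)$, so $\gamma_n(\Sigma_1)$ is $A$-invariant and therefore $G$-invariant, making $\Delta \nsgp G$. Property (1) follows from (i) and $\Delta \leq \Gamma$, while (2) follows from $\gamma_n(\Sigma_1) = \gamma_n(\Sigma_2) \subset \bar A_{O_n(1)}$ combined with (iii) ($P \subset A^6$) to lift the inclusion through the central extension $\Delta \twoheadrightarrow \Delta/P \twoheadrightarrow \Delta/P^\sharp$. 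The main obstacle I anticipate is the equality in Phase 3: making precise how the no-small-roots property forces $\gamma_n(\Sigma_1) = \gamma_n(\Sigma_2)$ requires careful analysis of the commutator structure inside the trigonalizable section and how the $T$-action on $O_p$-composition factors is determined entirely by roots, so the argument must be assembled from \Cref{lem:U-decomp,lem:CHT,lem:gamma_omega} together with the hypothesis on roots.
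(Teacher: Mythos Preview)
Your Phase~1 has the move backwards. When you encounter a small root $\chi$, you propose to replace $\Sigma$ by the \emph{quotient} $\Sigma/\ker\chi$ (equivalently, enlarge $P$ to the preimage $\tilde\chi^{-1}(1)$). But $\ker\chi \supset O_p(\Sigma)$, so $\Sigma/\ker\chi$ is a quotient of the abelian $p'$-group $T$ of size at most $K^{C_n}$: you have thrown away all of $O_p(\Sigma)$ and with it the entire problem. Moreover condition~(iii), $P \subset A^6$, is destroyed rather than ``preserved up to a bounded loss'': the new $P^\sharp = \tilde\chi^{-1}(1)$ is an uncontrolled subgroup of $\Gamma$. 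The correct move is to pass to the \emph{subgroup} $\ker\chi$, i.e.\ shrink $\Gamma$ to the preimage of $\ker\chi$ while keeping $P$ fixed. (Also, $\ker\chi$ is not characteristic in $\Sigma$; its $G$-invariance comes from hypothesis~(v).) The paper carries this out in one shot via a pigeonhole on the threshold $R$ rather than naive iteration, because passing to a subsection can shrink $|\chi(\bar A_2)|$ and turn previously large roots into small ones; your claim that the process halts in $n^2$ steps with the fixed threshold $K^{C_n}$ is not justified.

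Your choice of $\Delta$ also fails. You take $\Delta$ to be the preimage of $\gamma_n(\Sigma_1)$ and assert that ``(1) follows from (i) and $\Delta \le \Gamma$''. The inequality goes the wrong way: $\Delta \le \Gamma$ gives $|A\Delta/\Delta| \ge |A\Gamma/\Gamma|$, and $\Sigma_1/\gamma_n(\Sigma_1)$ is a nilpotent group of uncontrolled size, so there is no bound on the number of $\Delta$-cosets meeting $A$. The paper instead takes $\Delta$ to be the preimage of the larger group $\Omega = H\,C_U(T)\,T$, where $H = \gamma_n(\Sigma_1) = \gamma_n(\Sigma_2)$. One checks via Schur--Zassenhaus that $\Omega$ is $A$-invariant, that $\tr(A^2,\Sigma) \subset \Sigma_1 \subset \Omega$ (so (1) holds), and crucially that $\gamma_n(\Omega) = H$ (so (2) still holds). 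The equality $H_1 = H_2$ you flag as the main obstacle is indeed the heart of the matter; the paper proves it by passing to $H_2/H_2'$, using \Cref{lem:CHT} and the no-small-roots bound to force $[H_2 : H_1 H_2'] = 1$, and then invoking the Burnside basis theorem.
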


    \begin{proof}
    First we shrink $\Sigma$ (and $\Gamma$) until there are no small roots, as follows.
    Let $\chi_1, \dots, \chi_m \in \roots^*(\Sigma)$ be the $R$-small roots for $\tr (A^2, \Sigma)$, for some value of $R$.
    Note that $m \leq n^2$ by \Cref{lem:n^2-roots}.
    Let $\Sigma_m = \bigcap_{i=1}^m \ker \chi_i \nsgp \Sigma$.
    Then
    \[
        |\tr(A^2, \Sigma / \Sigma_m)|
        \leq \prod_{i=1}^m |\chi_i(\tr(A^2, \Sigma))|
        \leq R^m.
    \]
    Now if $\chi \in \roots^*(\Sigma)$ is an $R$-small root for $\tr(A^2, \Sigma_m)$ then, since $\tr(A^2, \Sigma)$ is covered by $R^m$ cosets of $\Sigma_m$, and therefore by $R^m$ translates of $\tr(A^2, \Sigma)^2 \cap \Sigma_m \subset \tr(A^4, \Sigma_m)$ (see \Cref{lem:covering-observation}), it follows from \Cref{lem:growth-in-sections}(3) that
    \[
        |\chi(\tr (A^2, \Sigma))|
        \leq R^m |\chi(\tr(A^4, \Sigma_m))|
        \leq K^7 R^m |\chi(\tr(A^2, \Sigma_m))|
        \leq K^7 R^{m+1},
    \]
    so $\chi$ is a $K^7 R^{m+1}$-small root for $\tr(A^2, \Sigma)$.
    By the pigeonhole principle we can choose $R$ so that $K^{C'_n} \leq R \leq K^{O_n(1)}$, where $C'_n = 15C_n + 3$ and $C_n$ is the constant in \Cref{prop:nsr-case}, and such that there is no $\chi \in \roots^*(\Sigma)$
    such that
    \[
        R < |\chi(\tr(A^2, \Sigma))| \leq K^7 R^{m+1}.
    \]
    For this value of $R$, it follows that $\tr(A^2, \Sigma_m)$ has no $R$-small roots $\chi \in \roots^*(\Sigma)$.
    Since $O_p(\Sigma_m) = O_p(\Sigma)$, we have $\roots^*(\Sigma_m) \subset \roots^*(\Sigma)$, so a fortiori $\tr(A^2, \Sigma_m)$ has no $R$-small roots in $\roots^*(\Sigma_m)$.
    Write $\Sigma_m = \Gamma_m / P$.
    Then $\Gamma_m \nsgp \Gamma$, and in fact $\Gamma_m \nsgp G$ since $G$ acts trivially on $\Sigma / O_p(\Sigma)$.
    From \Cref{lem:orbit-stab}(2),
    \[
        |A\Gamma_m / \Gamma_m| \leq |A\Gamma/\Gamma| |\tr(A^2, \Sigma/\Sigma_m)|
        \leq K^{O_n(1)}.
    \]
    Hence we may replace $\Gamma$ with $\Gamma_m$,
    and thus we may assume $\tr(A^2, \Sigma)$ has no $K^{C'_n}$-small roots.

    Next we replace $\Gamma$ with the preimage of $\gen{\tr(A^2, \Sigma / O_p(\Sigma))}$, ensuring that $\tr(A^2, \Sigma / O_p(\Sigma))$ generates $\Sigma / O_p(\Sigma)$.
    This does not change the value of $|\chi(A)|$ for any root $\chi$,
    nor does it change the value of $|A\Gamma/\Gamma|$, by \Cref{lem:covering-observation}.
    Again this does not compromise normality of $\Gamma$ in $G$ because $G$ acts trivially on $\Sigma / O_p(\Sigma)$.

    Let $U = O_p(\Sigma)$.
    For $i = 1, 2$ define
    \begin{align*}
        \Sigma_i &= \langle \tr(A^{2i}, \Sigma) \rangle,\\
        U_i &= \Sigma_i \cap U,\\
        H_i &= \gamma_n(\Sigma_i).
    \end{align*}
    By \Cref{trigonalizable-subgroups-are-PT}, $\Sigma$ is $p$-by-abelian and $\gamma_n(U) = 1$.
    By Schur--Zassenhaus, $\Sigma_1 = U_1 T$ for an abelian $p'$-group $T$.
    Since $\tr(A^2, \Sigma / U)$ generates $\Sigma / U$ we also have $\Sigma = UT$
    as well as $\Sigma_2 = \Sigma_2 \cap UT = U_2 T$ by the modular law.
    By \Cref{lem:U-decomp,lem:gamma_omega}, $H_i = [U_i, T] = [H_i, T]$.
    Also, note that
    \[
        \Sigma_1^a \leq \Sigma_2 \qquad (a \in A).
    \]

    Let $V = H_2 / H_2'$.
    Since $[H_2, T] = H_2$ it follows that $[V, T] = V$ and, by \Cref{lem:U-decomp}, $C_V(T) = 1$.
    Similarly for any subgroup $L \le V$ we have $C_L(T) \le C_V(T) = 1$ and, by \Cref{lem:U-decomp}, $L = [L, T]$.
    Applying this to $L = \langle \tr (A^2, V) \rangle$, we have
    \[
        \tr(A^2, V) \subset L = [L, T] \leq \tr([U_1, T], V) = \tr(H_1, V) = H_1 H_2' / H_2'.
    \]
    By \Cref{prop:nsr-case} applied to $\tr(A^4, \Sigma)$,
    which has tripling at most $K^{15}$ by \Cref{lem:growth-in-sections}
    and no $K^{15C_n}$-small roots,
    we have $H_2 \subset \tr(A^{15C_n}, H_2)$.
    Hence, by \Cref{lem:growth-in-sections},
    \[
        [H_2 : H_1 H_2'] \leq \frac{|\tr(A^{15C_n}, H_2)|}{|\tr(A^2, H_2)|} \leq K^{15C_n+3}.
    \]
    By \Cref{lem:n^2-roots}, any $T$-simple quotient of $H_2 / (H_1 H_2')$ is isomorphic to a $T$-module of the form $\F_p(\chi(T))$ for some root $\chi : T \to \F$. Since $H_2 = [H_2, T]$, we have $W = [W, T]$, so $\chi$ must be nontrivial.
    On the other hand the above bound shows that $|W| \le K^{15 C_n+3}$.
    Since $T$ has no $K^{15 C_n+3}$-small roots, there is no such $W$.
    Thus $H_2 = H_1 H_2'$.
    By the Burnside basis theorem it follows that $H_2 = H_1$.

    Let $H = H_1 = H_2$.
    Hence $\Sigma_i = H C_{U_i}(T) T$ by \Cref{lem:U-decomp}.
    Since $H^a = \gamma_n(\Sigma_1^a) \leq \gamma_n(\Sigma_2) = H$ for $a \in A$, it follows that $H$ is normalized by $G$.
    At last define
    \[
        \Omega = \Sigma_1 C_U(T) = \Sigma_2 C_U(T) = H C_U(T) T.
    \]
    For $a \in A$ we have $T^a \leq \Sigma_1^a \leq \Sigma_2 \leq \Omega$.
    Since $\Omega = TC_U(T)H$, it follows that $T^a = T^h$ for some $h \in H$ by Schur--Zassenhaus.
    Hence also
    \[
        C_U(T)^a = C_U(T)^h \leq C_U(T) H \leq \Omega.
    \]
    Thus $\Omega$ is normalized by $G$.
    By \Cref{lem:gamma_omega} (and recalling $\gamma_n(U) = 1$), we have $\gamma_n(\Omega) = [H C_U(T), T] = [H, T] = H$,
    and we saw earlier that $H = H_2$ is covered by $A^{15 C_n}$.

    Let $\Delta$ be the preimage of $\Omega$ in $\Gamma$.
    Recall that $A$ is covered by $|A\Gamma/\Gamma|$ translates of $A^2 \cap \Gamma$ by \Cref{lem:covering-observation}.
    Therefore since $\tr(A^2, \Sigma) \subset \Omega$, we have $|A\Delta/\Delta| = |A\Gamma/\Gamma|$.
    Now since $\gamma_n(\Omega)$ is covered by $A^{15 C_n}$
    and $P \subset A^6$
    it follows that $\gamma_n(\Delta) \subset A^{15 C_n+6}$.
    This completes the proof.
    \end{proof}

    \subsection{Creating a good section}

    Finally, let $A$ be as in \Cref{thm:main}: $A$ is a nonempty, symmetric, $K$-tripling subset of $\GL_n(\F)$.
    By \Cref{prop:main-finitization} we may assume $\F$ is finite.
    Let $G = \langle A \rangle$.
    By \Cref{thm:main-soluble}, there is a soluble section $\Sigma = \Gamma / P$ where $P \csgp \Gamma \nsgp G$ such that \emph{(i)--(iii)} of \Cref{prop:good-section-case} are satisfied.
    Moreover $P$ is perfect, soluble-by-$\Lie^*(p)$, and contained in a translate of $A^3$.
    By \Cref{prop:trig+} (and replacing $\Sigma$ with $\Sigma_0$), we can assume \emph{(iv)} holds too,
    and moreover we can assume that $G_0 = C_G(\Sigma / O_p(\Sigma))$ has index at most $n!$ in $G$.

    Let $m = [G : G_0]$, so $m \leq n!$.
    Since $A$ generates $G$, we have $G = A^m G_0$, so $G_0$ is generated by $A_0 = A^{3m} \cap G_0$ by \Cref{lem:schreier}.
    By \Cref{lem:growth-in-sections}(1), $A_0$ has tripling $K^{O(m)}$.
    By \Cref{lem:growth-in-sections}(2), $|A_0\Gamma/\Gamma| \leq K^{O(m)} |A\Gamma/\Gamma| \le K^{O_n(1)}$.
    Since $P$ acts trivially on $\Sigma$ we have $P \subset A^6 \cap G_0 \subset A_0^6$.
    By \Cref{trigonalizable-subgroups-are-PT}, $\Sigma / O_p(\Sigma)$ is abelian, so $\Gamma \le G_0$.
    Hence the hypotheses of \Cref{prop:good-section-case} hold with $(A_0, G_0, K^{O(m)})$ in the role of $(A, G, K)$.
    Thus there is $\Delta \nsgp G_0$ such that $P \leq \Delta \leq \Gamma$
    and $|A_0\Delta/\Delta| \leq K^{O_n(1)}$ and $\gamma_n(\Delta) \subset A_0^{O_n(1)}$.
    Note also that $\gamma_n(\Delta) / P$ is a $p$-group since $\gamma_n(\Gamma) / P \cong \gamma_n(\Sigma)$ and $\Sigma / O_p(\Sigma)$ is abelian, as noted.

    Since $A$ is covered by $m$ cosets of $G_0$, it is covered by $m$ translates of $A^2 \cap G_0 \subset A_0$ (\Cref{lem:covering-observation}), so $|A\Delta/\Delta| \leq m |A_0\Delta/\Delta| \leq K^{O_n(1)}$.

    The only remaining issue is that while $\Delta$ is normal in $G_0$ it may not be normal in $G$.
    But since $[G : G_0] = m$, there are at most $m$ conjugates of $\Delta$ in $G$, and since $G = A^m G_0$ each of them has the form $\Delta^a$ for some $a \in A^m$.
    Let $\Delta_0 \nsgp G$ be their intersection.
    If $a \in A^m$ then
    \[
        |A \Delta^a / \Delta^a|
        = |A^{a^{-1}} \Delta / \Delta|
        \leq |A^{2m+1} \Delta / \Delta|
        \leq K^{2m+1} |A\Delta / \Delta|
    \]
    by \Cref{lem:growth-in-sections}(2).
    It follows that $|A \Delta_0 / \Delta_0| \leq (K^{2m+1} |A\Delta / \Delta|)^m \leq K^{O_n(1)}$,
    and obviously $\gamma_n(\Delta_0) \subset \gamma_n(\Delta) \subset A^{O_n(1)}$.
    Moreover $P \leq \Delta_0$ and $\gamma_n(\Delta_0) / P$ is a $p$-group.
    This finally completes the proof of \Cref{thm:main}.

\section{Diameter of quasirandom groups}
\label{sec:application}

As promised in the introduction, here we show that
one of our key new ideas in the proof of \Cref{thm:main},
namely the ``affine conjugating trick''
(\Cref{lem:affine-conjugating-trick-intro})
has another application to growth-type questions.

\begin{theorem} [\Cref{thm:polylog-dimeter-intro}~restated]
  \label{thm:polylog-dimeter}
  For each positive integer $n$ there are positive numbers
  $K=K(n)$ and $c=c(n)$
  with the following property.
  Let $\F$ be a field and $G\le\GL_n(\F)$ be a $K$-quasirandom finite
  subgroup. Then the Cayley graph of $G$ with respect to any
  generating set has diameter at most $\br{\log|G|}^c$.
\end{theorem}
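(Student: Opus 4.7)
By \Cref{thm:malcev-locally-residually-finite} we may assume $\F$ is finite of characteristic $p$. Fix a symmetric generating set $A\ni 1$ of $G$; the goal is to produce $m\leq(\log|G|)^{c(n)}$ with $A^m=G$. The argument parallels the proof of \Cref{thm:main}: structure $G$ via Weisfeiler, cover the semisimple quotient via the Product Theorem, and finally cover the $p$-core via the affine conjugating trick.

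Choosing $K(n)$ large enough that quasirandomness precludes the bounded-index quotients arising in Weisfeiler's theorem (\Cref{thm:weisfeiler}), $G/U$ is a central product $Q_1\cdots Q_k$ of $k\leq n$ quasisimple groups of Lie type of characteristic $p$, each of rank $\leq n$, where $U=O_p(G)$; the abelian $p'$-factor in the Weisfeiler decomposition is absorbed because $G$, being $K$-quasirandom, is perfect. With $L_i=Q_i/Z(Q_i)$ and $L=\prod L_i$, the kernel $Z_0$ of $G/U\twoheadrightarrow L$ has order $O_n(1)$ (\Cref{cor:weisfeiler}). The Product Theorem (\Cref{thm:PS-main}) gives each $L_i$ Cayley diameter $\leq(\log|G|)^{O_n(1)}$ in the image of $A$; then \Cref{lem:direct-product} assembles the factors of $L$, and a few applications of \Cref{lem:gowers-trick} bridge the central $Z_0$-extension, producing some $m_2\leq(\log|G|)^{O_n(1)}$ with $A^{m_2}$ surjecting onto $G/U$.

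The main task is covering the unipotent radical $U$, which is nilpotent of class $c\leq n-1$. We induct down the lower central series $U=\gamma_1(U)\geq\cdots\geq\gamma_{c+1}(U)=1$; each factor $W_i=\gamma_i(U)/\gamma_{i+1}(U)$ is abelian, and the $G$-action on $W_i$ factors through $G/U$ (since $U$ acts trivially on $W_i$), so $W_i$ is a natural $\F_p(G/U)$-module. Given inductively that $A^{\ell_{i-1}}$ covers $G/\gamma_i(U)$, we extract a symmetric $G/U$-invariant subset $B_i\subset W_i$ of controlled tripling inside $A^{O(\ell_{i-1})}$ by taking $G$-conjugates of products $ab^{-1}\bmod\gamma_{i+1}(U)$ for pairs $a,b\in A^{\ell_{i-1}}$ congruent modulo $\gamma_i(U)$. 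An adaptation of the affine conjugating trick (\Cref{lem:affine-conjugating-trick}) to the (generally non-split) extension $W_i\triangleleft G/\gamma_{i+1}(U)$, using the $O_n(1)$-generation of $G/U$ and the $K^{21}$-quasirandomness inherited from $G$, yields $[W_i,G/U]\subset B_i^{O_n(1)}\subset A^{O_n(\ell_{i-1})}$. Steinberg's theorem on representations of quasisimple groups of Lie type in the defining characteristic (flagged in the introduction) is then invoked to bound the $G/U$-fixed quotient $W_i/[W_i,G/U]$, which is mopped up by a final application of \Cref{lem:gowers-trick}. Iterating over $c\leq n-1$ levels preserves the polylogarithmic bound, giving $m\leq(\log|G|)^{O_n(1)}$.

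The main obstacle is this last step: (i) extending the semidirect-product version of \Cref{lem:affine-conjugating-trick} to the non-split extensions $W_i\triangleleft G/\gamma_{i+1}(U)$ by lifting generators of the quotient into $G$; (ii) producing a sufficiently large symmetric $G/U$-invariant generating set of $W_i$ with small tripling from polylogarithmic powers of $A$; and (iii) using Steinberg's theorem to control the trivial $G/U$-isotypic part $C_{W_i}(G/U)$ so that the affine conjugating trick captures essentially all of $W_i$ at each level of the induction.
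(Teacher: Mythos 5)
Your outline matches the paper in broad strokes (Weisfeiler, Product Theorem, Schreier, affine conjugating trick, Steinberg, nilpotency induction) but the crucial inner step fails, and the way you invoke Steinberg is a misconception.

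The decisive gap is in your step ``apply the affine conjugating trick to $B_i \subset W_i$.'' \Cref{lem:affine-conjugating-trick} needs a set of small tripling $K$ with $K^{21} < \deg_\C(G/U)$. You say $B_i$ has ``controlled tripling,'' but there is no reason for this: it is built from an arbitrary generating set. One can try to repair this by pigeonholing to find $k$ with $|B_i^{3^{k+1}}| < K|B_i^{3^k}|$, but then $K^k \le |W_i|$, so $k = O(\log|W_i|/\log K)$ and the resulting exponent is $3^k = |W_i|^{O(1)}$, which is polynomial in $|G|$, not polylogarithmic. This pigeonhole only yields $k = O_n(1)$ if $|W_i|$ is bounded by a fixed power of $|S_i|$ (equivalently of $\deg_\C(G/U)$). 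The lower central factors $\gamma_i(U)/\gamma_{i+1}(U)$ of $U = O_p(G)$ admit no such bound: $U$ can dwarf $G/U$. The paper resolves exactly this by \emph{not} working on the level of the generating set: \Cref{thm:linear-actions-are-bounded} uses McNinch's linearization theorem to embed $P$ in a connected unipotent group $U$ with a $G$-invariant chain whose factors are \emph{irreducible} $\bar\F G$-modules, and then Steinberg's rationality/dimension theorem forces each irreducible $\F_p S_i$-constituent $X_{j,i}$ to satisfy $|X_{j,i}| = |S_i|^{O_n(1)}$; this is what makes the pigeonhole deliver a uniform constant $B = O_n(1)$ in the definition of a $B$-bounded chain, as an \emph{a priori} structural fact independent of the generating set. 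Only then, in \Cref{lem:application-induction-step}, is the covering argument run against an arbitrary generating set, using the $B$-bounded structure as a black box.

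Two further points. First, your plan invokes Steinberg to ``control the trivial $G/U$-isotypic part $W_i/[W_i,G/U]$'': Steinberg's theorem bounds dimensions and rationality of \emph{nontrivial} irreducible representations in defining characteristic and says nothing about the size of the trivial isotypic quotient, so this won't work. The paper handles trivial $\F_pG$-composition factors purely group-theoretically via Rhemtulla's lemma (\Cref{lem:rhemtulla}), and this hinges on the relation $V = [V,G]$, which the paper secures by working with $P = [G,\Sol(G)]$ (for which $P = [P,G]$ by the three-subgroups lemma, see \Cref{cor:weisfeiler}(b)) rather than with $O_p(G)$; with your choice of $U=O_p(G)$ the trivial quotient might be all of $W_i$ and you have no lever at all. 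Second, \Cref{lem:gowers-trick} cannot ``mop up'' an abelian $p$-module $W_i/[W_i,G/U]$ since that quotient has no quasirandomness whatsoever.
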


This theorem is sharp in the following sense.
In \cite{PS-JAMS}*{Example~75} a perfect subgroup $G$ of $\SL_5(\F_q)$
is constructed which has diameter $|G|^c$ for a constant $c > 0$.
Slightly modifying this example, one can obtain subgroups
$G \le \SL_n(\F_q)$ with $\deg_\C(G)\ge n-1$ and diameter $|G|^{c_n}$.

The proof of \Cref{thm:polylog-dimeter-intro}
is based on \Cref{lem:affine-conjugating-trick-intro}
and on a boundedness property of finite linear groups
(\Cref{thm:linear-actions-are-bounded})
which is  of independent interest.
The proof of \Cref{thm:linear-actions-are-bounded}
in turn uses a result of McNinch concerning
connected algebraic groups acting on connected unipotent groups,
and on an important result of Steinberg on representations of
finite simple groups of Lie type.

\begin{definition}
%\label{def:linearizable-action}
    Let $G$ be a group acting algebraically on a connected unipotent
    group $U$ over an algebraically closed field.
    The action is \emph{linearizable}
    if there is a $G$-invariant normal chain of connected
    closed subgroups ${1=U_0\nsgp U_1\nsgp\dots\nsgp U_k=U}$
    such that each quotient $U_j/U_{j-1}$ is
    $G$-equivariantly isomorphic to a vector space with a linear $G$-action.
\end{definition}

\begin{lemma} \label{lem:linearizable-action}
    Let $\F$ be an algebraically closed field of characteristic $p>0$,
    let $U_0\le\GL_n(\F)$ be a unipotent subgroup,
    and let $G\le\GL_n(\F)$ be a subgroup normalizing $U_0$.
    Then there is a $G$-invariant connected closed unipotent subgroup
    $U \le \GL_n(\F)$ containing $U_0$
    and
    $G$ has a subgroup $H$ of index $O_n(1)$
    whose action on $U$ is linearizable.
\end{lemma}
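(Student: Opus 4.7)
The plan is to prove \Cref{lem:linearizable-action} in three stages: first construct $U$ using the algebraic structure of the Zariski closure of $G$, then apply McNinch's theorem to obtain a linearizable filtration for the identity component $\bar G^\circ$ acting on $U$, and finally descend to a bounded-index subgroup $H \leq G$.

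First I would construct $U$. Let $\bar G$ denote the Zariski closure of $G$ in $\GL_n(\F)$ and let $\bar U_0$ denote the Zariski closure of $U_0$. Since conjugation by any element of $\bar G$ is a morphism of algebraic varieties, $\bar G$ normalizes $\bar U_0$, which is closed and unipotent. Its identity component $\bar U_0^\circ$ is $\bar G$-invariant, connected, closed, and unipotent, but may fail to contain $U_0$. I would enlarge $\bar U_0^\circ$ to a $\bar G$-invariant connected closed unipotent subgroup $U \subseteq \GL_n(\F)$ containing $\bar U_0$ by fixing a Borel subgroup of $\bar G$ whose unipotent radical $R$ contains $\bar U_0^\circ$, and then taking $U$ to be a minimal $\bar G$-invariant connected closed subgroup of $R$ containing $\bar U_0$.

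Next I would linearize the action of $\bar G^\circ$ on $U$. This is the key step, where I would invoke McNinch's theorem on the action of a connected algebraic group on a connected unipotent group in characteristic $p$. This yields a $\bar G^\circ$-invariant chain of connected closed subgroups $1 = U_0 \nsgp U_1 \nsgp \cdots \nsgp U_k = U$ with each quotient $U_j/U_{j-1}$ being $\bar G^\circ$-equivariantly isomorphic to a vector space on which $\bar G^\circ$ acts linearly. In particular the chain is preserved by $G \cap \bar G^\circ$, with linear action on each quotient.

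Finally I would descend to a bounded-index subgroup $H \leq G$. The natural choice is to take $H$ to be the intersection of $G$ with the stabilizer in $\bar G$ of the McNinch chain, so that $[G : H]$ is bounded by the orbit size of the chain under $\bar G / \bar G^\circ$. The hard part will be bounding this orbit size by $O_n(1)$: the finite group $\bar G / \bar G^\circ$ can a priori have unbounded order, so one cannot simply take $H = G \cap \bar G^\circ$. Instead I would exploit that a $\bar G^\circ$-invariant linearizable chain has length at most $\dim U \leq n^2$ and is determined by vector-space quotients of dimension at most $n^2$, so the chain is encoded by $O_n(1)$ worth of linear-algebraic data on which $\bar G/\bar G^\circ$ acts through a group of $n$-bounded size, giving $[G : H] \leq O_n(1)$.
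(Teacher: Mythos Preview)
Your proposal has genuine gaps in both stages.

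\textbf{Construction of $U$.} The construction via a Borel subgroup of $\bar G$ does not work. First, $U_0$ is only normalized by $G$, not contained in $G$, so $\bar U_0$ need not lie in $\bar G$ at all, let alone in the unipotent radical of one of its Borel subgroups. Second, the unipotent radical $R$ of a Borel subgroup of $\bar G$ is not $\bar G$-invariant, so ``a minimal $\bar G$-invariant connected closed subgroup of $R$ containing $\bar U_0$'' does not make sense. The paper bypasses all of this with an elementary linear-algebraic construction: let $N \subset M_n(\F)$ be the linear span of $\{u-1 : u \in U_0\}$, a $G$-invariant nilpotent associative subalgebra, and set $U = 1 + N$. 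This is visibly connected (an affine space), closed, unipotent, $G$-invariant, and contains $U_0$.

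\textbf{Bounded index.} Even granting a $\bar G^\circ$-linearizable chain, taking $H$ to be the stabilizer in $G$ of the chain is not enough: an element $h$ preserving each $U_j$ acts on $U_j/U_{j-1}$, but there is no reason this action is \emph{linear} under the given $\bar G^\circ$-equivariant identification with a vector space. Linearizability of the $H$-action requires $H$ to lie inside a connected group to which McNinch's theorem applies, not merely to stabilize the chain. Moreover, the orbit-size argument is unconvincing: chains of connected closed subgroups of $U$ vary in continuous families, and ``$O_n(1)$ worth of linear-algebraic data'' does not by itself bound an orbit of $\bar G/\bar G^\circ$. The paper instead sets $S = N_{\GL_n(\F)}(U)$ and takes $H = G \cap S^\circ$. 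Since $G \le S$, one has $[G:H] \le [S:S^\circ]$, and the number of components of $S$ is $O_n(1)$ by a Bezout-type degree bound (the adjoint image of $S$ is cut out of $\opr{Ad}(\GL_n(\F))$ by linear equations). McNinch's theorem then applies to the connected algebraic group $S^\circ$ acting on $U$, and $H \le S^\circ$ inherits the linearization directly.
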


\begin{proof}
    Let $M_n(\F)$ denote the linear space of $n$-by-$n$ matrices over
    $\F$.
    Then $\GL_n(\F)$ acts linearly on $M_n(\F)$ via conjugation.
    Let $N \le M_n(\F)$ be the linear span of $\{u-1: u \in U_0\}$.
    Then $N$ is a $G$-invariant nilpotent subalgebra of $M_n(\F)$
    and $U = 1 + N$ is a $G$-invariant connected closed unipotent subgroup.

    Let $S$ be the normalizer of $U$ in $\GL_n(\F)$. We claim that $[S:S^\circ] = O_n(1)$.
    Observe that $\opr{Ad}(S)$ is just the intersection of $\opr{Ad}(\GL_n(\F))$ with the stabilizer of $N$ in $\GL(M_n(\F))$,
    which is itself the intersection of $\GL(M_n(\F))$ with a linear subspace (of codimension $d (n^2 - d)$, where $d = \dim N$).
    Therefore by Bezout's theorem the number of components of $S$ is
    bounded by the degree of
    $\opr{Ad}(\GL_n(\F))=\opr{Ad}(\SL_n(\F))$,
    which is at most $n^{n^2}$ (see \cite{angelo-MO}).

    Let $H = G \cap S^\circ$.
    Since $G \le S$ we have $[G:H] \le [S:S^\circ] = O_n(1)$.

    The result of McNinch~\cite{mcninch2014linearity}
    implies that the $S^\circ$-action on $U$ is linearizable.
    Hence the $H$-action on $U$ is linearizable.
\end{proof}

\begin{definition} \label{def:B-bounded-action-onp-group}
    Let $G$ be a finite group acting on a $p$-group $P$.
    Assume that $G/O_p(G)$ is a central product of quasisimple groups
    $S_1,\dots,S_l$.
    We say that a $G$-invariant normal chain
    $1=P_0\nsgp P_1\nsgp\dots\nsgp P_k=P$
    of length $k$
    is \emph{$B$-bounded} for some $B>0$
    if
    \begin{enumerate} [(a)]
        \item
        each quotient $P_j/P_{j-1}$ is elementary abelian,
        \item
        as an $\F_pG$-module $P_j/P_{j-1}$ is isomorphic
        to the direct sum of isomorphic copies of some irreducible
        $\F_pG$-module $W_j$,
        \item
        $W_j$ has a tensor product decomposition
        $W_j\cong X_{j,1}\otimes_{\F_p}\cdots\otimes_{\F_p}X_{j,l}$
        where each $X_{j,i}$ is an irreducible $\F_pS_i$-module,
        \item \label{item:trivial-or-narrow}
        either $X_{j,i}\cong\F_p$ with trivial $S_i$-action, or
        \[
            X_{j,i} = (S_i x)^{\pm B}
            \qquad
            \text{for all nonzero}~x\in X_{j,i}.
        \]
    \end{enumerate}
\end{definition}

\begin{lemma} \label{lem:sections-of-B-bounded-action}
    Let $G$ be a finite group acting on a $p$-group $P$.
    If $P$ has a $B$-bounded $G$-invariant normal chain of length $k$
    then each $G$-invariant section of $P$
    has a $B$-bounded $G$-invariant normal chain of length $k$.
\end{lemma}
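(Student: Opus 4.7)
The plan is to lift the given $B$-bounded chain through the section. Let $Q/R$ be a $G$-invariant section of $P$, so that $R \nsgp Q \le P$ with both $Q$ and $R$ stable under $G$. I would define
\[
    Q_j = \frac{(P_j \cap Q) R}{R} \qquad (0 \le j \le k).
\]
Then $1 = Q_0 \nsgp Q_1 \nsgp \dots \nsgp Q_k = Q/R$ is a $G$-invariant normal chain of length $k$ in $Q/R$: each $P_j \cap Q$ is $G$-invariant and normal in $Q$ (since $P_j \nsgp P$ is $G$-invariant and $Q$ is $G$-invariant), and normality of $Q_{j-1}$ in $Q_j$ follows using that $R$ normalizes $P_{j-1} \cap Q$ (because $R \le P$ normalizes the normal subgroup $P_{j-1}$ and lies in $Q$).

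The key computation is to recognize each successive quotient as a $G$-module subquotient of $P_j/P_{j-1}$. By the modular law applied to the chain $P_{j-1} \cap Q \le P_j \cap Q$ with $R \nsgp Q$, we have
\[
    (P_j \cap Q) \cap (P_{j-1} \cap Q)R = (P_{j-1} \cap Q)\bigl((P_j \cap Q) \cap R\bigr),
\]
so the second isomorphism theorem gives
\[
    Q_j/Q_{j-1} \;\cong\; \frac{P_j \cap Q}{(P_{j-1} \cap Q)\bigl((P_j \cap Q) \cap R\bigr)},
\]
which is a $G$-equivariant quotient of $(P_j \cap Q)/(P_{j-1} \cap Q)$. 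The latter embeds $G$-equivariantly into $P_j/P_{j-1}$ via the inclusion $P_j \cap Q \hookrightarrow P_j$ (whose kernel after projection to $P_j/P_{j-1}$ is exactly $P_{j-1} \cap Q$). Hence $Q_j/Q_{j-1}$ is a $G$-module subquotient of $P_j/P_{j-1}$.

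It then remains to verify that each condition of \Cref{def:B-bounded-action-onp-group} is preserved under taking $G$-equivariant subquotients of $P_j/P_{j-1}$. Part~(a) is immediate, since subquotients of elementary abelian $p$-groups are elementary abelian. For~(b), the module $P_j/P_{j-1}$ is $W_j$-isotypic and therefore semisimple as an $\F_p G$-module, so any $G$-subquotient is again a direct sum of copies of $W_j$, possibly with zero multiplicity (in which case $Q_j = Q_{j-1}$). Conditions~(c) and~(d) depend only on the irreducible module $W_j$ and its tensor factors $X_{j,1}, \dots, X_{j,l}$, which are unchanged. No real obstacle is expected: the argument is essentially bookkeeping around the modular law and semisimplicity, and the only mild subtlety is that trivial steps $Q_j = Q_{j-1}$ may appear in the chain, which is harmless since the $B$-boundedness conditions are vacuous on the zero module (and one could alternatively delete such steps to obtain a chain of length at most $k$).
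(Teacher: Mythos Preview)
Your argument is correct and is precisely the routine verification the paper has in mind: the paper's own proof is the single word ``Clear.'' Your intersection-and-image chain $Q_j=(P_j\cap Q)R/R$, the modular-law identification of $Q_j/Q_{j-1}$ as a $G$-subquotient of $P_j/P_{j-1}$, and the observation that isotypic semisimple modules pass to subquotients (so conditions (a)--(d) persist, with possibly trivial steps) constitute exactly the standard unpacking of that one word.
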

\begin{proof}
Clear.
\end{proof}

\begin{theorem} \label{thm:linear-actions-are-bounded}
    For all $n>0$ there is an integer $K=K(n)>0$ with the following property.
    Let $\F$ be a field of characteristic $p>0$,
    let $G\le\GL_n(\F)$ be a finite subgroup
    and let $P<\GL_n(\F)$ be a $p$-group normalized by $G$.
    If $\deg_\C(G)\ge K$
    then any $G$-invariant section of $P$
    has an $O_n(1)$-bounded $G$-invariant normal chain of length less
    than $n^2$.
\end{theorem}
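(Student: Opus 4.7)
My approach combines three ingredients: Weisfeiler's theorem to pin down the structure of $G/O_p(G)$; the McNinch linearization provided by \Cref{lem:linearizable-action} to obtain an algebraic filtration of $P$ with vector-space quotients; and a theorem of Steinberg on modular representations of finite simple groups of Lie type to control the finer $B$-bounded conditions. Applying \Cref{thm:weisfeiler} to $G$, the quasirandomness hypothesis (with $K$ sufficiently large) forces the bounded-index Weisfeiler subgroup to coincide with $G$ itself (otherwise $G$ would have a bounded-degree permutation, hence complex, representation on a small coset space), and similarly absorbs the abelian $p'$-factor into $O_p(G)$. Hence $G/O_p(G) = S_1 \circ \cdots \circ S_l$ with each $S_i$ quasisimple of Lie type in characteristic $p$; by \Cref{lem:lies} we have $l \le n/2$ and each $S_i$ of Lie rank at most $n-1$.

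Next, extend scalars to $\bar\F$ and apply \Cref{lem:linearizable-action} with $U_0=P$: this yields a $G$-invariant connected closed unipotent subgroup $U\le\GL_n(\bar\F)$ containing $P$, together with a subgroup $H\le G$ of bounded index acting linearizably on $U$. Taking $K$ larger than this index forces $H=G$ by the same quasirandomness argument, so the whole $G$-action on $U$ is linearizable. Refining the resulting chain by a $G$-equivariant Jordan--H\"older filtration of its vector-space quotients, I may assume each $V_j := U_j/U_{j-1}$ is an absolutely irreducible rational $\bar\F G$-module, and the chain length $k$ is at most $\dim U \le \binom{n}{2} < n^2$.

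Setting $P_j = P\cap U_j$ transfers the chain to $P$: each $P_j/P_{j-1}$ is a finite $\F_pG$-submodule of $V_j$, hence elementary abelian of exponent $p$, giving condition~(a). A $p$-group fixed-point argument shows $O_p(G)$ acts trivially on the absolutely irreducible $V_j$, so the $G$-action factors through $G/O_p(G)=S_1\circ\cdots\circ S_l$, and the standard tensor decomposition of irreducibles of central products gives $V_j = V_{j,1}\otimes_{\bar\F}\cdots\otimes_{\bar\F}V_{j,l}$ with each $V_{j,i}$ absolutely irreducible for $S_i$. A Galois descent argument, using Schur's lemma to identify the field of definition of $V_j$ inside $\bar\F$, then pinpoints a single $\F_pG$-irreducible $W_j$ underlying $V_j$ such that any finite $\F_pG$-submodule of $V_j$---in particular $P_j/P_{j-1}$---is a direct sum of copies of $W_j$; this establishes (b), and propagating the tensor decomposition down to $\F_p$ yields (c).

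The main obstacle is condition~(d), which demands that for every nontrivial irreducible $\F_pS_i$-module $X_{j,i}$ and every nonzero $x\in X_{j,i}$ the signed orbit $(S_ix)^{\pm B}$ already exhausts $X_{j,i}$ for some $B=O_n(1)$. Although such modules can have arbitrarily large $\F_p$-dimension, arising as tensor products of Frobenius twists of restricted representations via Steinberg's tensor product theorem, the cited theorem of Steinberg on defining-characteristic representations of finite groups of Lie type of bounded rank produces short orbit-generating sets of uniformly bounded length, which yields the desired bound on $B$. With (a)--(d) established for $P$ itself, \Cref{lem:sections-of-B-bounded-action} immediately upgrades the conclusion to any $G$-invariant section of $P$, completing the proof.
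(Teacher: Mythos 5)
Your proposal follows the same architecture as the paper's proof (Weisfeiler's theorem, the McNinch linearization via \Cref{lem:linearizable-action}, refinement to irreducible $\bar\F G$-modules, the tensor decomposition over the quasisimple factors, and Steinberg's defining-field descent). Transferring the chain to $P$ by setting $P_j = P \cap U_j$ rather than establishing the conclusion for $U$ and then invoking \Cref{lem:sections-of-B-bounded-action} is a harmless cosmetic variation.

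However, there is a genuine gap at the crux of the argument: condition \ref{item:trivial-or-narrow} of \Cref{def:B-bounded-action-onp-group}. Steinberg's theorem gives you an $\F_{q_i}S_i$-form $M_{j,i}$ of $Y_{j,i}$ with $\dim_{\F_{q_i}}(M_{j,i}) < n^2$, and hence the cardinality bound $|X_{j,i}| \le |S_i|^{O_n(1)}$. It does \emph{not} give ``short orbit-generating sets of uniformly bounded length''; that is not a representation-theoretic statement at all, and no such consequence follows from Steinberg's result alone. The claim $(S_i x)^{\pm B} = X_{j,i}$ for $B = O_n(1)$ requires a growth argument. The paper obtains it by combining the cardinality bound with the affine conjugating trick (\Cref{lem:affine-conjugating-trick-intro}): set $A = (S_i x)^{\pm1}$, a symmetric $S_i$-invariant generating set of the irreducible module $X_{j,i}$ (so $[X_{j,i}, S_i] = X_{j,i}$); iterate $A \mapsto A^3$ until the tripling constant drops below $|S_i|^{c/n}$, which by quasirandomness (\Cref{landazuri--seitz}) and the cardinality bound happens within $O_n(1)$ steps; then \Cref{lem:affine-conjugating-trick-intro} applied to $X_{j,i} \rtimes S_i$ shows a bounded power of the iterate covers $X_{j,i}$. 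This step is precisely where the paper's new technical tool enters, and it is the reason the result is genuinely nontrivial. Your proposal omits it entirely by attributing its conclusion to Steinberg.

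A second, more minor point: your assertion that $P_j/P_{j-1}$ is a direct sum of copies of a single irreducible $\F_p G$-module $W_j$ needs a word of justification. It follows because $V_j$, viewed as an $\F_p S$-module, is itself a direct sum of copies of $W_j$ (this is what descent from $\bar\F$ to $\F_{q_i}$ to $\F_p$ gives, together with the tensor structure), hence $W_j$-isotypic and semisimple, so any $\F_p G$-submodule inherits this property. Your appeal to ``Galois descent'' and ``Schur's lemma'' is pointing in the right direction but should be made to do this specific work.
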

\begin{proof}
    If we choose $K$ large enough then by Weisfeiler's theorem
    $G/O_p(G)$  is isomorphic to a central product of at most
    $n$ quasisimple groups of Lie type of characteristic $p$
    with Lie rank at most $n$
    (see \Cref{thm:weisfeiler,lem:lies}).
    Let $S_1,\dots,S_l$ denote the factors,
    and let $S$ be their direct product.

    Let $\overline\F$ be the  algebraic closure of $\F$.
    \Cref{lem:linearizable-action}
    gives us a subgroup $H\le G$ of index $O_n(1)$
    and a $G$-invariant connected unipotent subgroup
    $U<\GL_n(\overline\F)$ containing $P$ with linearizable $H$-action
    (it is well known that $p$-subgroups of $\GL_n(\F)$ are
    unipotent).
    By choosing $K$ large enough we make sure that $G=H$.
    We shall prove that $U$ has an $O_n(1)$-bounded $G$-invariant normal
    chain of length less than $n^2$.

    Since the action of $G$ on $U$ is linearizable, there is a $G$-invariant normal chain
    $1=U_0\nsgp U_1\nsgp\dots\nsgp U_k=U$
    of connected closed subgroups
    such that
    each quotient $V_j=U_j/U_{j-1}$ a isomorphic to a vector space over $\bar\F$ with a linear $G$-action.
    We can refine this chain so that each $V_j$ is
    an irreducible $\overline\F G$-module.
    The length of this chain is $k\le\dim(U)<n^2$.
    We shall prove that the normal chain $U_0\nsgp\dots\nsgp U_k$
    in $O_n(1)$-bounded.

    Since $V_j$ is irreducible, $O_p(G)$ acts on it trivially,
    so it is an irreducible $\overline\F S$-module.
    Therefore it can be written as a tensor product
    \[
    V_j\cong
    Y_{j,1}\otimes_{\overline\F}\dots\otimes_{\overline\F}Y_{j,l}
    \]
    where each $Y_{j,i}$ is an irreducible $\overline\F S_i$-module.

    Let $\F_{q_i}$ be the defining field of $S_i$.
    By a result of Steinberg \cite{steinberg1963representations}, there is an $\F_{q_i}S_i$ module $M_{j,i}$ such that
    \[
    Y_{j,i}\cong M_{j,i}\otimes_{\F_{q_i}}\overline\F,
    \]
    and $\dim_{\F_{q_i}}(M_{j,i})<n^2$.
    Note that in Steinberg's result not all simple groups are
    covered:  odd-dimensional unitary groups and Ree groups of type
    $^2G_2$  are excluded.
    The reason for this exclusion is that one needs to lift projective
    representations to ordinary representations
    of  the group he called  $\Gamma^1_q$
    constructed from the simply connected algebraic group.
    Since we now know the Schur multipliers of these groups, we can
    include them as well, with a small number of exceptions with bounded
    size. We exclude those exceptions from the $S_i$ by making $K$ large enough.

    Each $M_{j,i}$ as an $\F_pS_i$-module must be the direct sum of
    isomorphic copies of some irreducible $\F_pS_i$-module $X_{j,i}$.
    Therefore $V_j$ as an $\F_p S$-module is the direct sum of isomorphic copies of $X_{j,1} \otimes_{\F_p} \cdots \otimes_{\F_p} X_{j,l}$.

    Finally suppose that $S_i$ acts nontrivially on $X_{j,i}$ and $x \in X_{j,i}$.
    The bound $\dim_{\F_{q_i}}(M_{j,i}) < n^2$ implies that $|M_{j,i}| = |S_i|^{O_n(1)}$ and therefore also $|X_{j,i}| = |S_i|^{O_n(1)}$.
    Let $A = (S_i x)^{\pm1} \subset X_{j,i}$.
    Then $A$ is a symmetric $S_i$-invariant generating set for $X_{j,i}$.
    Now we apply the affine conjugating trick to $A \subset X_{j,i} \rtimes S_i$ with $K = |S_i|^{c/n}$ (see \Cref{landazuri--seitz}).
    Let $k \ge 0$ be minimal such that $|A^{3^{k+1}}| < K |A^{3^k}|$.
    Then
    \[
        |S_i|^{ck/n} \le K^k |A| \le |A^{3^k}| \le |X_{j,i}| = |S_i|^{O_n(1)},
    \]
    which implies that $k = O_n(1)$.
    Since $|A^{3^k}|$ has tripling less than $K$, \Cref{lem:affine-conjugating-trick-intro} implies that $A^{3^k \cdot 14} = X_{j,i}$.
    Thus $X_{j,i} = A^{O_n(1)}$.

    This proves that the normal chain $U_0\nsgp\cdots\nsgp U_k$
    is $O_n(1)$-bounded.
    By \Cref{lem:sections-of-B-bounded-action}
    the $G$-action on
    each $G$-invariant section of $P$ has an $O_n(1)$-bounded
    $G$-invariant normal chain of length less than $n^2$.
\end{proof}

\begin{lemma} \label{lem:application-induction-step}
    Let $G$ be a finite group acting on an abelian $p$-group $V$ such that $V = [V, G]$.
    Assume that $G / O_p(G)$ is $d$-generated and a central product of quasisimple groups.
    Suppose that $V$ has a $B$-bounded $G$-invariant normal chain of length $k$.
    If $A\subseteq V$ is a $G$-invariant symmetric generating set containing $1$
    then $A^{O(Bd\log|V|)^k}=V$.
\end{lemma}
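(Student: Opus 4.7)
I would proceed by induction on the chain length $k$. The base case $k = 0$ is trivial, since then $V = V_0 = 1$. For the inductive step, I reduce modulo $V_1$: the quotient $V/V_1$ inherits a $B$-bounded $G$-invariant chain of length $k-1$, and $V = [V, G]$ descends to $V/V_1 = [V/V_1, G]$. The image $\bar A$ of $A$ in $V/V_1$ inherits $G$-invariance, symmetry, the condition $1 \in \bar A$, and generates $V/V_1$, so by the induction hypothesis $\bar A^M = V/V_1$ with $M = (CBd\log|V|)^{k-1}$ for a constant $C$ to be chosen. Equivalently, $A^M V_1 = V$.

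It then remains to cover $V_1$. By the $B$-bounded structure, $V_1$ is elementary abelian and isomorphic as an $\F_p G$-module to a direct sum of copies of the irreducible module $W_1$, so $V_1$ is semisimple. If $G$ acts trivially on $V_1$, then I apply Rhemtulla's lemma (\Cref{lem:rhemtulla}) to $V = [V, G]$: writing any $v \in V_1 \subseteq V$ as $\sum_{i=1}^d [v_i, x_i]$ with $v_i \in V$ and $x_i$ lifts of generators of $\bar G = G/O_p(G)$, and decomposing $v_i = a_i + w_i$ with $a_i \in A^M$ and $w_i \in V_1$ via $V = A^M V_1$, bilinearity of the commutator and triviality of the action on $V_1$ give $[v_i, x_i] = a_i^{x_i} - a_i \in A^{2M}$, whence $V_1 \subseteq A^{2Md}$. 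If $G$ acts nontrivially on $V_1$, then semisimplicity forces $V_1 = [V_1, G]$; Schreier's lemma (\Cref{lem:schreier}) then yields the $G$-invariant symmetric generating set $A_1 = A^{3M} \cap V_1$ for $V_1$, and the $k = 1$ base case applied to $V_1$ gives $V_1 \subseteq A^{3M N_1}$ with $N_1 = O(Bd\log|V|)$. Either way $V \subseteq A^{(CBd\log|V|)^k}$ for $C$ sufficiently large.

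The inductive step thus reduces the lemma to the base case $k = 1$, where $V$ is a direct sum of copies of $W = X_1 \otimes_{\F_p} \cdots \otimes_{\F_p} X_l$, each $X_i$ either trivial or satisfying $(S_i x)^{\pm B} = X_i$ for every $0 \neq x \in X_i$, with $V = [V, G]$ (forcing at least one $X_i$ nontrivial). Here the plan is a doubling argument in the style of \Cref{lem:affine-conjugating-trick-intro}: iteratively, either $|A^3|/|A| \le K$ for a suitably chosen $K$, in which case the affine conjugating trick applied to $V \rtimes \bar G$ closes up $V = [V, G]$ in $O(d)$ further products; or $|A^3| > K|A|$, and I replace $A$ by $A^3$, which can happen at most $O(\log|V|/\log K)$ times before termination. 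The $B$-bounded orbit property $(S_i x)^{\pm B} = X_i$ prevents the growth from stagnating inside a proper submodule and furnishes a substitute for quasirandomness of $\bar G$ on the factors where $S_i$ acts.

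The main obstacle is precisely this base case. Choosing $K$ so that both the affine conjugating trick applies (which requires $\deg_\C(\bar G) > K^{21}$, and hence careful handling of any small quasisimple factors of $\bar G$ via direct orbit arguments in place of quasirandomness) and the doubling terminates within $O(Bd\log|V|)$ products is the central technical challenge; by contrast, the inductive step described above is a relatively routine reduction.
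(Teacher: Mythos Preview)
Your inductive skeleton is correct and matches the paper: induct on $k$, reduce modulo $V_1$, and split according to whether $G$ acts trivially on $V_1$. Your handling of the trivial-action case via \Cref{lem:rhemtulla} is exactly the paper's argument.

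The gap is in the nontrivial-action case. You defer this to a ``$k=1$ base case'' and propose to handle that by a doubling argument combined with the affine conjugating trick. This does not work under the stated hypotheses: \Cref{lem:affine-conjugating-trick} requires $\deg_\C(\bar G) > K^{21}$, but the lemma assumes nothing about $\deg_\C$ of the quasisimple factors of $G/O_p(G)$, which may all be small. You note this obstacle and suggest that the $B$-bounded orbit condition ``furnishes a substitute for quasirandomness'', but you do not say how, and the quasirandomness in \Cref{lem:affine-conjugating-trick} is used to control conjugates of $G$ inside $V \rtimes G$, which is unrelated to orbit sizes in $V$.

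The point you are missing is that the $B$-bounded condition is \emph{already} the output of a doubling argument (carried out earlier, in the proof of \Cref{thm:linear-actions-are-bounded}), and is designed to be used directly here with no further growth argument. Concretely: pick one quasisimple factor $S$ acting nontrivially on $V_1$. As an $\F_p S$-module, $V_1$ is a direct sum of copies of the irreducible $X$ for which condition~\ref{item:trivial-or-narrow} gives $X = (Sx)^{\pm B}$ for every nonzero $x$. With $A_1 = A^{3M} \cap V_1$ from Schreier's lemma, each nonzero $a \in A_1$ generates a cyclic $\F_p S$-submodule $X_a \cong X$, and the orbit condition gives $X_a = (Sa)^{\pm B} \subset A_1^B$. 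Since the $X_a$ span $V_1$, some $\log_p|V_1|$ of them already sum to $V_1$, whence $V_1 = A_1^{B \log_p |V_1|}$. This yields the bound $O(Bd\log|V|)^k$ immediately and avoids any appeal to quasirandomness.
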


\begin{proof}
    Let $1=V_0\nsgp V_1\nsgp\cdots\nsgp V_k=V$ be
    a $B$-bounded normal chain.
    We prove the statement by induction on $k$.

    If $k=0$ then $V$ is trivial and the statement clearly holds.

    For the induction step we assume that the statement holds for the
    group $V/V_1$. This gives us an exponent
    $c =  O(Bd \log |V|)^{k-1}$ such that $A^c$ covers $V/V_1$.
    According to
    \Cref{def:B-bounded-action-onp-group}\ref{item:trivial-or-narrow}
    we have two possibilities.

    Suppose first that $V_1$ is a trivial $\F_pG$-module.
    Let $g_1, \dots, g_d$ be elements of $G$ whose images generate $G/O_p(G)$.
    Then
    \[
        \{[v_1, g_1] \cdots [v_d, g_d] : v_1, \dots, v_d \in V\} =
        [V, G] = V
    \]
    by \Cref{lem:rhemtulla}.
    Since $[V_1,G]=1$, $[v_i,g_i]$ depends only on the
    image of $v_i$ in $V/V_1$. Therefore
    \[
      \{[v_1, g_1] \cdots [v_d, g_d] : v_1, \dots, v_d \in A^c\} = V.
    \]
    Hence $A^{2dc}=V$ and  the induction step is complete in this case.

    Consider now the case when $V_1$ is a nontrivial $\F_pG$-module.
    Let $S$ be one of the quasisimple factors of $G/V$ which acts on
    $V_1$ nontrivially. By
    \Cref{def:B-bounded-action-onp-group}\ref{item:trivial-or-narrow},
    as an $\F_pS$-module $V_1$ is the direct sum of isomorphic copies
    of an irreducible $\F_pS$-module $X$ such that
    \[
      X = (Sx)^{\pm B}
      \quad\quad
      \text{for all nonzero }x\in X.
    \]
    By \Cref{lem:schreier} (Schreier's lemma),
    $A_1=A^{3c}\cap V_1$ is a symmetric generating set of $V_1$.
    Each element $a\in A_1$ generates an $\F_pS$-submodule $X_a\le V_1$
    isomorphic to $X$, so $X_a = (Sa)^{\pm B}$.
    Hence $A_1^B$ contains $X_a$ for every $a \in A_1$.
    Since $V_1$ is the sum of the submodules $X_a$, it is the sum of some $\log_p |V_1|$ of them.
    Therefore $V_1=A_1^{B \log_p |V_1|}$,
    and so $V \subset A^{c + c B \log_p |V_1|} \subset A^{O(Bd \log|V|)^k}$.
    The induction step is complete in this case too.
\end{proof}

\begin{proof}[Proof of \Cref{thm:polylog-dimeter-intro}
    ($=$\Cref{thm:polylog-dimeter})]
    If $\F$ has characteristic $0$ then by Jordan's theorem $G$ has an
    abelian subgroup of index $O_n(1)$. By choosing $K(n)$ larger
    than this bound we can ensure that $G$ is abelian and hence $G$ is trivial.
    The statement holds in this case.

    Hence assume that $\F$ has characteristic $p>0$.
    By \Cref{thm:weisfeiler,cor:weisfeiler}, $G/O_p(G)$ is a central product of quasisimple groups of Lie type of characteristic $p$,
    $P = [G, \Sol(G)]$ is a $p$-subgroup such that $P = [P, G]$,
    and $|\Sol(G) / P| \le (2n+1)^n$.
    By \Cref{lem:lies}, $G / \Sol(G) \in \Lie^n(p)$.
    In particular, $G / P$ is $O_n(1)$-generated.

    Let $A\subseteq G$ be a symmetric generating set.
    Since $G / \Sol(G) \in \Lie^n(p)$ and $\Sol(G) / P$ has bounded order, $G / P$ has poly-logarithmic diameter. Hence $A^\ell$ covers $G/P$, where $\ell = c_1 (\log |G|)^{c_2}$ and $c_1 = c_1(n)$ and $c_2 = c_2(n)$.

    By Schreier's lemma (\Cref{lem:schreier}), $A^{3\ell}\cap P$ generates $P$.
    Therefore $A^{5\ell}$ contains a set $B \subset P$ whose image in $P / [P, P]$ is a symmetric $G$-invariant generating set containing $1$.

    If $K$ is large enough then by \Cref{thm:linear-actions-are-bounded}
    the section $P/[P,P]$ has an $O_n(1)$-bounded $G$-invariant normal
    chain of length less than $n^2$.
    Applying \Cref{lem:application-induction-step} to $G$ and $P/[P,P]$,
    we obtain that
    $X = B^{O_n(\log|P|)^{n^2}}$ maps surjectively onto $P/[P,P]$.

    The nilpotency class of $P$ is at most $n-1$.
    Consider the set $X_i \subset X^{2^{i+1}}$ of all weight-$i$ left-normed commutators $[x_1, \dots, x_i]$, where $x_1, \dots, x_i \in X$.
    The image of $X_i$ in $\gamma_i(P) / \gamma_{i+1}(P)$ is a generating set and a union of subgroups, so $X_i^{O(\log |P|)}$ covers $\gamma_i(P) / \gamma_{i+1}(P)$.
    This shows that $X^{O(2^i \log |P|)}$ covers $\gamma_i(P) / \gamma_{i+1}(P)$ for each $i=1,\dots, n-1$, so $X^{O(2^n \log |P|)}$ covers $P$.
    Thus $A^{O_n(\log |G|)^{O_n(1)}}$ covers $G$.
\end{proof}

\bibliography{refs}
\end{document}